\documentclass[11pt,a4paper]{article}
\usepackage[utf8]{inputenc}
\usepackage{amsmath}
\usepackage{amsfonts}
\usepackage{hyperref}
\hypersetup{colorlinks=true, linkcolor=blue, citecolor=blue,
    urlcolor=blue}
\usepackage{amssymb}
\usepackage{extarrows}
\usepackage{amsthm}
\usepackage{hyperref}
\usepackage{cite}
\usepackage{enumerate}
\PassOptionsToPackage{normalem}{ulem}
\usepackage{ulem}
\usepackage[margin=0.8in]{geometry} 
\hfuzz1pc 
\hbadness = 10001 
\usepackage{array}
\usepackage{cases}
\usepackage{stmaryrd}
\usepackage{mathrsfs}
\usepackage{longtable}
\usepackage{algorithm}
\usepackage{algpseudocode}
\usepackage{booktabs}
\allowdisplaybreaks[4]

\usepackage{color}
\usepackage{dsfont}
\usepackage{marginnote}
\usepackage{enumitem}
\usepackage{verbatim}
\usepackage{graphicx}

\theoremstyle{plain}
\newtheorem{theorem}{\protect Theorem}[section]
\newtheorem{prop}[theorem]{\protect Proposition}
\newtheorem{definition}[theorem]{\protect Definition}

\newtheorem{lemma}[theorem]{\protect Lemma}
\newtheorem{remark}[theorem]{\protect Remark}

\newtheorem{corollary}[theorem]{\protect Corollary}

\newtheorem{assumption}[theorem]{\protect Assumption}
\newtheorem{condition}[theorem]{\protect Condition}



\newcommand{\Rb}{\mathbb{R}}
\newcommand{\Eb}{\mathbb{E}}
\newcommand{\Xc}{\mathcal{X}}

\newcommand{\Dc}{\mathcal{D}}
\newcommand{\Fb}{\mathbb{F}}
\newcommand{\Pb}{\mathbb{P}}
\newcommand{\Pc}{\mathcal{P}}

\newcommand{\Mc}{\mathcal{M}}

\newcommand{\Lc}{\mathcal{L}}

\newcommand{\Ab}{\mathbb{A}}

\newcommand{\Bc}{\mathcal{B}}
\newcommand{\Ac}{\mathcal{A}}
\newcommand{\Xb}{\mathbb{X}}

\newcommand{\proj}{\Pi_{\overline{\Xb}}}
\newcommand{\indicator}[1]{\mathbf{1}_{\{#1\}}}
\title{Mean Field Game with Reflected Jump Diffusion Dynamics:\\ A Linear Programming Approach
}

\author{Zongxia Liang\thanks{Department of Mathematical Sciences, Tsinghua University, Beijing, China. \url{ liangzongxia@tsinghua.edu.cn}}
\and Xiang Yu\thanks{Department of Applied Mathematics,  The Hong Kong Polytechnic University, Kowloon, Hong Kong. \url{xiang.yu@polyu.edu.hk}}
	\and Keyu Zhang\thanks{
	Department of Mathematical Sciences, Tsinghua University, Beijing, China. \url{Zhangky21@mails.tsinghua.edu.cn}}}
\date{}
\begin{document}
	\maketitle
	\vspace{-0.4in}
		\begin{abstract}
			This paper develops a linear programming approach for mean field games with reflected jump-diffusion dynamics.  We first prove the equivalence between the mean field equilibria in the linear programming formulation and those in the weak relaxed control formulation under some measurability and growth conditions on model coefficients. Building upon the characterization of the occupation measure in the equivalence result, we further establish the existence of linear programming mean field equilibria under fairly general conditions on model coefficients. Finally, a numerical example is presented to illustrate the computation of a mean field equilibrium using the linear programming formulation.

			\vspace{0.1in}	
			\noindent\textbf{Keywords}: Mean field games, linear programming approach, reflected diffusion with jumps, mean field equilibrium, occupation measure 
            
    \vspace{0.1in}
	\noindent \textbf{MSC2020 subject classifications}: 49N80, 60H30, 60G07\\
		\end{abstract}
		
	\vspace{-0.2in}
		\section{Introduction}
         	Mean field game (MFG), introduced independently by Lasry and Lions \cite{lasry_mean_2007} and Huang, Malham\'e, and Caines \cite{caines_large_2006}, has become a powerful and tractable paradigm for analyzing approximate Nash equilibria in large population games with homogeneous players, where all players interact with the population only through the distribution of the aggregated states or strategies. The core question in MFG theory is the existence of a mean field equilibrium (MFE), for which different approaches have been developed over the past decades. For instance, the analytical approach, initially proposed by Lasry and Lions \cite{lasry_mean_2007}, characterizes the MFE by solving a coupled system of Hamilton–Jacobi–Bellman (HJB) and Fokker–Planck (FP) equations: the backward HJB equation determines the optimal value function of a representative agent given the population distribution, while the forward FP equation describes the evolution of the population distribution under the optimal control. The second approach, introduced by Carmona and Delarue \cite{carmona_probabilistic_2013,carmona_mean_2013}, is probabilistic based on the stochastic maximum principle, where the HJB equation is replaced by a backward stochastic differential equation (BSDE). The other probabilistic approach, introduced by Lacker \cite{lacker_mean_2015}, employs the idea of relaxed control formulation in El
		Karoui et al. \cite{nicole_el_compactification_1987} and Haussmann and Lepeltier \cite{haussmann_existence_1990} and develops some compactification arguments to establish the existence of Markovian MFEs in general settings. Recently, this compactification method has
		been generalized and refined to analyze the existence of MFE for MFG problems in various
		context such as: MFG with Brownian common noise in \cite{common_2016}; MFG with absorption in \cite{campiFis}, MFG  with controlled jumps in \cite{benazzoli_mean_2020}; MFG with singular controls in \cite{fu_mean_2017}, MFG with reflected diffusion state dynamics in \cite{bo2025meanfieldgamecontrols,jarni2025meanfieldgamesreflected}, and MFG with Poissonian common noise and pathwise formulation in \cite{BWWY25},
        just to name a few. 
		
		Another well-known relaxation technique to ensure the existence of solution for stochastic control and game problems is the linear programming (LP) formulation. This approach recasts a stochastic control problem as an LP problem over the space of occupation measures (see, e.g., \cite{stockbridge_time-average_1990,kurtz_existence_1998,kurtz_stationary_2001,bhatt_occupation_1996,helmes_linear_2007,dufour_existence_2012,kurtz2017linearprogrammingformulationssingular}). Comparing with the compactification method, the underlying mechanisms to establish the existence of optimal control are different: the compactification arguments work directly on the law of the controlled state process, whereas the LP method focuses on the occupation measures, and thus their convexity and compactness arguments differ substantially.  Moreover, the existence theorems obtained via compactification arguments are rather abstract and cannot be exploited for the development of numerical algorithms. In contrast, the LP formulation is well-suited for constructing numerical schemes, see, e.g,  \cite{cho2002linear,mendiondo1998approximation, bouveret2022technological, aid2020entry,dumitrescu2023linear,Stockbridge2018}.  In the literature of MFGs, however, the LP approach has received relatively less attention. To name a few related studies in this direction, the LP formulation was utilized in MFG of stopping in \cite{benazzoli_mean_2020}, and was later extended to a more general setting including mixed optimal stopping/control in \cite{dumitrescu_control_2021}.  \cite{guo2024mf} introduced a primal--dual formulation for discrete-time MFGs via the complementarity condition in linear programming. In a similar spirit, \cite{guo2022optimization} examined a discrete-time  Stackelberg MFG and established its equivalence with a minimax optimization problem. \cite{yu2025majorminormeanfieldgame} investigated a discrete-time major–minor MFG by combining the LP formulation with the entropy regularization in the major's problem.    More recently, \cite{guo2025continuoustimemeanfieldgames} developed a primal-dual formulation for continuous
		time MFGs based on the LP formulation, where the dual problem is formulated as a maximization problem over smooth subsolutions of the Hamilton-Jacobi-Bellman (HJB) equation.

        In the present paper, we focus on a class of continuous-time MFG with two key features: (i) the state dynamics are constrained to a convex domain via multi-dimensional reflections, and they are subject to sudden shocks modeled by a point process with a control-dependent jump intensity; (ii) the cost functional involves a terminal cost. 
        To the best of our knowledge, MFGs in this setting have not yet been studied using the probabilistic approach. In the presence of jumps, the state process no longer takes values in the space of continuous functions, but in the Skorokhod space $\mathbb{D}([0,T],\Rb^d)$ of all c\`adl\`ag functions. When applying the conventional compactification method, the convergence $x^n \to x$ in the Skorokhod space under the usual topologies (for example, the $J_1$ topology) does not imply $x^n(T) \to x(T)$ as $n \to \infty$. This limitation renders the compactification method inadequate in tackling the terminal cost in our model setting with jumps. In contrast, the LP approach circumvents these issues when we work on the space of occupation measures, which avoids direct engagement with the  topology of the path space, thereby simplifying the analysis. Therefore, we develop the LP approach in the present paper to address the existence of MFE of the MFG in the context of reflected jump-diffusion dynamics. Our main theoretical contributions can be summarized as two-fold:

\begin{enumerate}
	\item Under mild measurability and growth conditions on the coefficients, we establish the equivalence between LP-mean field equilibria (LPMFEs) and the weak mean field equilibria (MFEs) in the relaxed control formulation in the model with reflected jump-diffusion dynamics (see Proposition \ref{thm:prob-rep-R0} and Theorem \ref{theorem:equi}), which is new to the literature. A key challenge is that, in the presence of jumps, the reflection process is not necessarily continuous. To handle the associated discontinuities, we reformulate the cost functional to depend only on the continuous part of the reflection, absorbing the jump effects into a modified running cost (see Remark \ref{remark:jump}). This reformulation naturally motivates our LP approach. Establishing this equivalence requires a two-fold argument. One direction is relatively standard: a weak MFE naturally induces an admissible occupation measure, thus yielding an LPMFE. The main difficulty lies in the other direction.  In a classical diffusion model, this direction has been established in \cite{guo2025continuoustimemeanfieldgames} by exploiting the superposition principle for diffusion processes (see \cite{trevisan_well-posedness_2016}), which ensures that any admissible occupation measure for the linear programming induces a weak solution of the controlled state process. However, in the presence of reflected dynamics, it becomes challenging to consider the controlled Fokker-Planck equation and to generalize the superposition principle for reflected SDE, let alone the jump term in our setting will further complicate the analysis. Instead,
    we resort to generalizing the result developed by  Kurtz and Stockbridge in \cite{kurtz_stationary_2001} to our new setting, which essentially asserts that for each occupation measure $\mu$ satisfying a linear constraint, there exists a stationary solution to a (singular) martingale problem with marginal distribution $\mu$. With the aid of this result, we show that for a given measurable flow of measures $\mu$, any occupation measure $(\nu, m, \lambda) \in \Dc_{P}(\mu)$ (see Definition~\ref{def:lpocm}) corresponds to a solution of a controlled martingale problem. Furthermore, by a martingale representation argument, the occupation measure induces an admissible relaxed control (see Definition~\ref{weak_relax}) in the context of reflected jump-diffusion dynamics, thereby drawing the equivalence between two formulations.
	
	\item  We establish the existence of an LPMFE (see Definition \ref{lpdef}) under general conditions comparable to those using the compactification approach, e.g., \cite{lacker_mean_2015}. This marks a significant improvement over the existing MFG literature using the LP approach, which typically relies on more restrictive assumptions. For instance, \cite{benazzoli_mean_2020} considered only measure-independent coefficients. While \cite{dumitrescu_control_2021} extended the analysis to scenarios with measure-dependent coefficients, this dependence was limited to a finite number of bounded moments. More recently, \cite{dumitrescu2023linear} proved the existence result with general measure dependence in the cost functional, but required the dynamic coefficients to be measure-independent and the control space to be compact. In this paper, we successfully relax all these assumptions. To this end, our proof, inspired by the approximation procedure in \cite{lacker_mean_2015}, consists of two main steps: Firstly, establishing the existence result in the case with bounded coefficients and a compact control space; Secondly, extending the existence result to the original setting with general model assumptions via the approximation argument. This demonstrates that the LP approach is as powerful as the compactification method for establishing the existence result. Moreover, the existence of an LPMFE directly yields the existence of an MFE with Markovian relaxed controls, without resorting to the Markovian projection technique as in the compactification approach. We also present a showcase example of an inventory management problem to exemplify the advantage of the linear programming approach in the numerical computation of an LPMFE in the context of jump-diffusion dynamics.  
\end{enumerate}

The rest of the paper is organized as follows.  Section \ref{sect:2} introduces the MFG problem with reflected jump-diffusion dynamics and its linear programming formulation based on occupation measures. Section \ref{sec:eqv}
establishes the crucial equivalence between the LPMFEs and MFEs in the weak relaxed control formulation. Section \ref{sect:4} proves the existence of an LPMFE in the setting of reflected jump-diffusion dynamics. Section \ref{sect:5} presents a numerical implementation to illustrate the application of the LP approach.   Finally, the paper ends with an Appendix, collecting some technical results used in previous sections.

\subsection{Notations and Preliminaries}
For a topological space $(E, \tau)$, we denote by $\mathcal{B}(E)$ its Borel $\sigma$-algebra, by $\mathcal{P}(E)$ the set of probability measures on $E$, and by $\mathcal{M}_+(E)$ (resp.\ $\mathcal{M}(E)$) the set of positive (resp.\ signed) finite Borel measures on $E$. For any $x \in E$, $\delta_x \in \mathcal{P}(E)$ denotes the Dirac measure at $x$. If $(E, d)$ is a metric space and $p \geq 1$, we denote by $\mathcal{P}_p(E)$ the set of $\mu \in \mathcal{P}(E)$ with a finite $p$-th moment, i.e., $\int_E d(x, x_0)^p \mu(\mathrm{d}x) < \infty$ for some $x_0 \in E$. The spaces $\mathcal{M}^p_{+}(E)$ and $\mathcal{M}^p(E)$ are defined analogously using the total variation measure $|\mu|$. For $\mu, \nu \in \mathcal{P}_p(E)$, the $p$-Wasserstein distance is given by
\[
    W_{E, p}(\mu, \nu) := \left( \inf_{\gamma \in \Pi(\mu, \nu)} \int_{E \times E} d(x, y)^p \, \gamma(dx, dy) \right)^{1/p},
\]
where $\Pi(\mu, \nu)$ is the set of couplings of $\mu$ and $\nu$. Unless otherwise specified, $\mathcal{P}_p(E)$ is equipped with the metric $W_{E, p}$, and we abbreviate this to $W_p$ when the context is clear. If $E$ is a Polish space, then so is $(\mathcal{P}_p(E), W_{E, p})$.

We denote by $M(E)$ the space of Borel measurable functions, by $B_b(E)$ the space of bounded Borel measurable functions and by $C(E)$ the space of continuous functions. The subspace of bounded continuous functions, $C_b(E)$, is endowed with the supremum norm $\|\varphi\|_\infty := \sup\limits_{x \in E} |\varphi(x)|$. For metric spaces $E_1, E_2$, $C(E_1, E_2)$ (resp. $M(E_1,E_2)$) denotes the space of $E_2$-valued continuous (resp. measurable) functions on $E_1$. 
 We denote by $\mathrm{BL}(E)$ the space of bounded Lipschitz functions on $E$, equipped with the norm 
\begin{align*}
\|\varphi\|_{BL}:=\|\varphi\|_{\infty}+\sup_{x\neq y}\frac{|\varphi(x)-\varphi(y)|}{|x-y|}.
\end{align*}
Fix a finite time horizon $T > 0$. For a Polish space $E$, 
let $\mathbb{D}([0, T], E)$ denote the space of càdlàg (RCLL) paths, equipped with the Skorokhod topology. We denote by $C_b^{1,2}([0,T] \times E)$  the space of functions in $C_b([0,T] \times E)$ whose partial derivatives $\partial_t u, \partial_x u$, and $\partial_{xx} u$ are also in $C_b([0,T] \times E)$.

Let $\mathbb{X} \subset \mathbb{R}^d$ be the state space, and let the action space $\mathbb{A}$ be a closed subset of a Euclidean space.   Denote by $\overline{\mathbb{X}}$ the closure of $\mathbb{X}$.  We consider the space $M_p := M([0, T],\mathcal{P}_p(\overline{\mathbb{X}}))$ of measurable flows of probability measures. For any flow $\mu\in M_p$, its essential supremum $p$-th moment is defined as
\[
    \|\mu\|^p_{T} := \operatorname*{ess\,sup}_{t\in[0,T]} \int_{\overline{\mathbb{X}}}|z|^p\,\mu_t(dz).
\]
We denote by $B_p:=\left\{ \mu \in M_p : \|\mu\|_T^p < \infty \right\}.$ 
The space $M_p$ is endowed with the topology of convergence in measure, denoted by $\tilde{\tau}_p$, which is induced by the metric
\begin{align*}
    d^M_{p}(\mu^1,\mu^2)=\int_0^T 1\wedge W_p(\mu^1_t,\mu^2_t)\,dt, \quad \forall \  \mu^1, \mu^2 \in M_p.
\end{align*}
A sequence $(\mu^n)_n \subset M_p$ converges to $\mu \in M_p$ in this topology if for any $\epsilon>0$,
\[
    \lim_{n\to\infty} \int_0^T \mathbf{1}_{\{t:\,W_p(\mu^n_t, \mu_t)>\epsilon\}}\,dt = 0.
\]
A standard result is that if $\mu^n \to \mu$ in $M_p$, then there exists a subsequence $(\mu^{n_k})_k$ such that $W_p(\mu_t^{n_k}, \mu_t)\to 0$ for almost every $t \in [0,T]$.

Let $\tilde{V}$ denote the set of measurable flows $m = \{m_t\}_{t \in [0,T]}$ of finite signed Borel measures on $\overline{\mathbb{X}} \times \mathbb{A}$ such that
$\int_0^T |m_t|(\overline{\mathbb{X}} \times \mathbb{A}) \, dt < \infty$. 
Define $\tilde{V}_0 = \tilde{V} / \!\sim$, where $m^1 \sim m^2$ if $m^1_t = m^2_t$ for almost every $t$. Then $\tilde{V}_0$ is a vector space under the usual sum and scalar multiplication, with the null flow as its zero element. We identify each $m \in \tilde{V}_0$ with the signed measure $m_t(\mathrm{d}x,\mathrm{d}a)\mathrm{d}t$ on $[0,T] \times \overline{\mathbb{X}} \times \mathbb{A}$ and endow $\tilde{V}_0$ with the corresponding weak topology, $\tau_0$.  
For $p \geq 1$, define the subsets of $\tilde{V}_0$,
$$
\tilde{V}_p:=\left\{m \in \tilde{V}_0: \int_0^T \int_{\overline{\Xb}\times\Ab}(|x|^p+|a|^p)|m|_t(\mathrm{d}x,\mathrm{d}a) \mathrm{d} t<\infty\right\}.
$$
 This space is
endowed with the weak topology with respect to continuous functions with $p$-polynomial growth, denoted by $\tau_p$, of the associated measures. We denote by $V_0$ (resp. $V_p$ ) the set of measure flows $\left(m_t\right)_{t \in[0, T]} \in \tilde{V}_0$ (resp. $\tilde{V}_p$) such that $\mathrm{d}t$-a.e. $m_t$ is a probability measure.  The space $\tilde{V}_p$ is a Hausdorff locally convex topological vector space and $V_p$ is metrizable. (cf. \cite[Appendix A]{dumitrescu_control_2021} for more details).

We will often work on the product space $\mathcal{P}_2(\overline{\mathbb{X}}) \times V_2 \times \mathcal{M}_{+}([0,T]\times\partial\mathbb{X})$, which we endow with the product topology $\tau_2 \otimes \tau_2 \otimes \tau_0$. It is a standard result (see, e.g., \cite[Theorem 7.12]{Villani2003Topics}) that on $\mathcal{P}_2(\overline{\mathbb{X}})$, the topology induced by the 2-Wasserstein metric, $W_2$, coincides with $\tau_2$. Because this equivalence extends to spaces of finite measures sharing the same total mass, the topology induced by the $W_2$ metric on $V_2$ is also equivalent to $\tau_2$. We also consider the topology of stable convergence on $V_p$, denoted $\overline{\tau}_p$. A sequence $(m^n)_n \subset V_p$ converges to $m \in V_p$ in this topology if for all test functions  $\varphi:[0,T]\times\overline{\Xb}\times\Ab\rightarrow\Rb$ jointly measurable, continuous in $(x,a)$ for each $t\in[0,T]$ and with $p$-polynomial growth, we have $\lim\limits_{n\rightarrow\infty}\int\int_{[0, T]\times\overline{\Xb}\times\Ab}\varphi(t,x,a)m^n_t(dx,da)dt=\int\int_{[0,T]\times\overline{\Xb}\times\Ab}\varphi(t,x,a)m_t(dx,da)dt$.
Moreover, it follows from \cite[Lemma A.3]{lacker_mean_2015} that on the set $V_p$, convergence in the topology $\tau_p$ implies convergence in the stable topology $\overline{\tau}_p$; hence, the two topologies coincide on $V_p$.  Consequently, for our purposes on $V_2$, we can endow the product space $\mathcal{P}_2(\overline{\mathbb{X}}) \times V_2 \times \mathcal{M}_{+}([0,T]\times\partial\mathbb{X})$ with the equivalent topology $\tau_2 \otimes \overline{\tau}_2 \otimes \tau_0$.

Finally, let \(\mathbb{M}^{d\times d}_{+}\) denote the space of nonnegative definite \(d\times d\) matrices, and let \(I_{d\times d}\) be the \(d\times d\) identity matrix.
For a closed convex set $D \subset \mathbb{R}^d$ and $y \in \mathbb{R}^d$, let $\Pi_D(y)$ denote the projection of $y$ onto $D$. Then, for all $y_1, y_2 \in \mathbb{R}^d$,
\begin{align}\label{lipPi}
    |\Pi_D(y_1) - \Pi_D(y_2)| \le |y_1 - y_2|.
\end{align}

\section{Problem Formulation}\label{sect:2}
\subsection{Mean field game problem}
Let us first introduce the strong formulation of MFG problem with state reflections. Consider the state-dynamics coefficients
\begin{align*}
&b : [0,T] \times \mathbb{R}^d \times \mathcal{P}_2(\overline{\mathbb{X}}) \times \mathbb{A} \to \mathbb{R}^d, 
\quad 
\sigma : [0,T] \times \mathbb{R}^d \times \mathcal{P}_2(\overline{\mathbb{X}}) \times \mathbb{A} \to \mathbb{R}^{d \times d},\\
&\beta : [0,T] \times \mathbb{R}^d \times \mathcal{P}_2(\overline{\mathbb{X}}) \times \mathbb{A} \to \mathbb{R}^d,\quad {\pi:[0,T] \times \mathbb{A}\rightarrow(0,\infty),}
\end{align*}
and the cost functions
\[
f : [0,T] \times \mathbb{R}^d \times \mathcal{P}_2(\overline{\mathbb{X}}) \times \mathbb{A} \to \mathbb{R}, 
\quad
h : [0,T] \times \mathbb{R}^d  \to \mathbb{R}, 
\quad
g : \mathbb{R}^d \times \mathcal{P}_2(\overline{\mathbb{X}}) \to \mathbb{R}.
\]
\begin{assumption}\label{c1} 
Let {$q\geq 4$} be a given real number.
	\begin{itemize}
		\item[(i)] The state space $\mathbb{X}$ is a convex open domain in $\mathbb{R}^d$ with a $C^3$ boundary. We focus on two cases: (1) $\mathbb{X}$ is a bounded set, or (2) $\mathbb{X} = (0,\infty)$.

       \item[(ii)] The control space $\mathbb{A}$ is a closed subset of a Euclidean space.

        \item[(iii)] The initial distribution $m^*_0\in\Pc_{q}(\overline{\Xb})$.
		
		\item[(iv)]  The functions $b,\sigma,\beta,\pi,f$ and $g$ of $(t,x,\mu,a)$ are measurable in $t$ and continuous in $(x,\mu,a)$. The function \(h: [0, T] \times \partial\Xb \to \mathbb{R}\)  is bounded and continuous.

         \item [(v)] There exist constants $c_1, c_2 > 0$ such that for all  $(t,x,\mu,a)\in[0,T] \times \mathbb{R}^d \times \mathcal{P}_2(\overline{\mathbb{X}}) \times \mathbb{A}$, we have
        \begin{align*}
             &-c_1\left(1+|x|^2+\int_{\overline{\Xb}}|z|^2\mu(dz)\right)+c_2|a|^{q}\leq f(t,x,\mu,a)\leq c_1\left(1+|x|^2+\int_{\overline{\Xb}}|z|^2\mu(dz)+|a|^q\right),\\
            &|g(x,\mu)|\leq c_1\left(1+|x|^2+\int_{\overline{\Xb}}|z|^2\mu(dz)\right).
        \end{align*}

        \item [(vi)] There exists a constant $c_3>0$ such that for all $(t,\mu,a)\in[0,T]\times\Pc_2(\overline{\Xb})\times\Ab$ and $x,y\in\overline{\Xb}$, we have
        \begin{align*}
            |b(t,x,\mu,a)-b(t,y,\mu,a)|&+|\sigma(t,x,\mu,a)-\sigma(t,y,\mu,a)|+|\beta(t,x,\mu,a)-\beta(t,y,\mu,a)|\leq c_3|x-y|,
        \end{align*}		 
        and
\begin{align}\label{bc}
            &|b(t,x,\mu,a)|+|\sigma\sigma^{\top}(t,x,\mu,a)|+|\beta(t,x,\mu,a)|\leq c_3 \left(1+|x|+\left(\int_{\overline{\Xb}}|z|^2\mu(dz)\right)^{\frac{1}{2}}+|a|\right).
    \end{align}  
	\end{itemize}
\end{assumption}
For a given pair $(\mu,\rho)\in B_2\times\mathcal{P}_2(\overline{\mathbb{X}})$, consider a filtered probability space $(\Omega, \mathcal{F}, \mathbb{F}, \mathbb{P})$ that supports  a $d$-dimensional standard Brownian motion $W = (W_t)_{t \in [0,T]}$ and  a point process $N = (N_t)_{t \in [0,T]}$. For an $\mathbb{A}$-valued, $\mathbb{F}$-predictable process $\alpha$, the point process $N$ is assumed to have a stochastic intensity given by $\pi(t, \alpha_t)$. The compensated process is denoted by $\tilde{N}=\big( N_t - \int_0^t \pi(s, \alpha_s)ds\big)_{t\in [0,T]}$.
 The dynamics of the controlled, reflected state process $X = (X_t)_{t \in [0,T]}$ are governed by the following stochastic differential equation (SDE): 
\begin{equation}\label{sde_strong}
	dX_t 
	= b\bigl(t, X_t, \mu_t, \alpha_t\bigr)\,dt 
	+ \sigma\bigl(t, X_t, \mu_t, \alpha_t\bigr)\,dW_t
	+ \beta\bigl(t, X_{t-}, \mu_t, \alpha_t\bigr)\,d\tilde{N}_t
	+ m(X_t)\,dR_t,
	\quad
	\mathbb{P} \circ X_0^{-1} = m_0^*,
\end{equation}
where  $m(x)$ denotes the inward normal vector at $x \in \partial \mathbb{X}$, and $R = (R_t)_{t \in [0,T]}$ is a non-decreasing process serving as the reflection term, ensuring that $X_t \in \overline{\mathbb{X}}$ for all $t \in [0,T]$ and satisfying
\begin{equation}\label{Skorokhod1}
R_t = \int_0^t \mathbf{1}_{\{\partial \mathbb{X}\}}(X_s) \, dR_s.
\end{equation}
The objective function of the representative agent is defined by
\[
\mathbb{E}^{\mathbb{P}}\Biggl[
\int_{0}^{T} f\bigl(t, X_{t}, \mu_{t}, \alpha_{t}\bigr)dt
+
\int_{0}^{T} h\bigl(t, X_{t}\bigr)d R_{t}
+
g\bigl(X_{T}, \rho\bigr)
\Biggr].
\]
The goal of the representative agent is to minimize the above objective function over all admissible controls, subjecting to the \emph{mean-field consistency condition}.
\begin{definition}[Mean field equilibrium with strict control]\label{def:mfe}
	For a given measurable flow of measures $\mu \in B_2$,  define $\mathcal{S}(\mu)$ as the set of tuples $U = (\Omega, \mathcal{F}, \mathbb{F}, \mathbb{P}, W, N, X, R, \alpha)$
	such that $(\Omega, \mathcal{F}, \mathbb{F}, \mathbb{P})$ is a filtered probability space, $W$ is a $d$-dimensional standard $\mathbb{F}$-Brownian motion,  $\alpha$ is an $\mathbb{F}$-predictable process with values in $\mathbb{A}$,
     $N$ is a point process such that the compensated process $\tilde{N}$ is an $\mathbb{F}$-martingale,
    and $(X,R)$ is a pair of $\mathbb{F}$-progressively measurable processes such that $X$ is $\overline{\mathbb{X}}$-valued, $R$ is non-decreasing and RCLL, and the pair $(X,R)$ is a solution to the reflected SDE~\eqref{sde_strong} satisfying the Skorokhod condition~\eqref{Skorokhod1}.  Moreover, the following integrability condition holds:
    \begin{align}\label{integ1}
        \Eb^{\Pb}\left[\int_0^T(|X_s|^2+|\alpha_s|^2)ds+|X_T|^2+R^c_T\right]<\infty.
    \end{align}
    For a given pair $(\mu,\rho)\in B_2\times\mathcal{P}_2(\overline{\mathbb{X}})$,
	let $\mathcal{J}^S[\mu,\rho] : \mathcal{S}(\mu) \to \mathbb{R}$ be defined by 
	\[
	\mathcal{J}^S[\mu,\rho](U)
	= \mathbb{E}^{\mathbb{P}}\Biggl[
	\int_{0}^{T} f\bigl(t, X_{t}, \mu_{t}, \alpha_{t}\bigr)\,dt
	+
	\int_{0}^{T} h\bigl(t, X_{t}\bigr)\,dR_{t}
	+
	g\bigl(X_{T}, \rho\bigr)
	\Biggr].
	\]
	The value function of the representative agent's optimal control problem under strict controls given the aggregation $(\mu,\rho)$ is defined by
	\[
	V^{S}(\mu,\rho) := \inf_{U \in \mathcal{S}(\mu)} \mathcal{J}^S[\mu,\rho](U).
	\]
	Then $U^* = (\Omega, \mathcal{F}, \mathbb{F}, \mathbb{P}, W, N, X, R, \alpha)$ is called a mean field equilibrium with strict controls (in the weak sense) if $U^* \in \mathcal{S}(\mu^*)$ and $\mathcal{J}^S[\mu^*,\rho^*](U^*) = V^{S}(\mu^*,\rho^*)$, where $\mu^*_t = \mathbb{P} \circ (X_t)^{-1}$ for a.e. $t \in [0,T]$ and $\rho^*=\mathbb{P} \circ (X_T)^{-1}$.
\end{definition}
\begin{remark}\label{remark:jump}
Notably, the reflection process $R$ is not continuous in general. Potential discontinuities arise when jumps driven by the martingale $\tilde{N}$ push the state process $X$ to exit the domain $\overline{\mathbb{X}}$.
 Let $\Delta R_t := R_t - R_{t-}$ denote the jump size at time $t$, which can be identified as
\begin{equation*}\label{jumpofR}
    \Delta R_t = \left| \Pi_{\overline{\mathbb{X}}}\bigl(X_{t-} + \beta_t\bigr) - \bigl(X_{t-} + \beta_t\bigr) \right| \Delta N_t,
\end{equation*}
where $\beta_t := \beta(t,X_{t-},\mu_t,\alpha_t)$, $\Delta N_t := N_t - N_{t-}$, and $\Pi_{\overline{\mathbb{X}}}$ is the projection operator onto $\overline{\mathbb{X}}$.

This characterization allows us to reformulate the cost functional. By decomposing $R$ into its continuous part, $R^c$, and its jump  component $R^d:=(R^d_t)_{t\in[0,T]}$ with $R^d_t:=\sum_{s\leq t}\Delta R_s$, we have
\begin{align*} 
    &\mathbb{E}\left[
	\int_{0}^{T} f\bigl(t, X_{t}, \mu_{t}, \alpha_{t}\bigr)\,dt
	+
	\int_{0}^{T} h\bigl(t, X_{t}\bigr)\,dR_{t}
	+
	g\bigl(X_{T}, \rho\bigr)
	\right] \nonumber \\
    &\quad= \mathbb{E}\left[
	\int_{0}^{T} f\bigl(t, X_{t}, \mu_{t}, \alpha_{t}\bigr)\,dt
	+
	\int_{0}^{T} h\bigl(t, X_{t}\bigr)\,dR^c_{t}
	+ \sum_{0 < t \le T} h(t,X_t)\Delta R_t +
	g\bigl(X_{T}, \rho\bigr)
	\right] \nonumber \\
    &\quad= \mathbb{E}\left[
	\int_{0}^{T} f\bigl(t, X_{t}, \mu_{t}, \alpha_{t}\bigr)\,dt
	+
	\int_{0}^{T} h\bigl(t, X_{t}\bigr)\,dR^c_{t}
	+ \int_{0}^{T} G(t,X_{t-},\mu_t,\alpha_t) \, dN_t +
	g\bigl(X_{T}, \rho\bigr)
	\right] \nonumber \\
    &\quad= \mathbb{E}\left[
	\int_{0}^{T} \tilde{f}\bigl(t, X_{t}, \mu_{t}, \alpha_{t}\bigr)\,dt
	+
	\int_{0}^{T} h\bigl(t, X_{t}\bigr)\,dR^c_{t}
	+
	g\bigl(X_{T}, \rho\bigr)
	\right],
\end{align*}
where 
\begin{align}\label{tf}
    &G(t,x,\mu,a) := h\left(t, \Pi_{\overline{\mathbb{X}}}\bigl(x + \beta(t,x,\mu,a)\bigr)\right) \left| \Pi_{\overline{\mathbb{X}}}\bigl(x + \beta(t,x,\mu,a)\bigr) - \bigl(x + \beta(t,x,\mu,a)\bigr) \right|,\nonumber\\
    &\tilde{f}(t,x,\mu,a) := f(t,x,\mu,a) + G(t,x,\mu,a)\pi(t,a).
\end{align}
Because $X$ is an RCLL process, the distinction between $X_t$ and $X_{t-}$ vanishes in the resulting Lebesgue integral. Moreover, as $
    0=\int_0^t \mathbf{1}_{\mathring{\Xb}}(X_s)dR_s=\int_0^t \mathbf{1}_{\mathring{\Xb}}(X_s)dR^c_s+\sum_s \mathbf{1}_{\mathring{\Xb}}(X_s)\Delta R_s=\int_0^t \mathbf{1}_{\mathring{\Xb}}(X_s)dR^c_s$,
 we obtain that $R^c_t = \int_0^t \mathbf{1}_{\{\partial \mathbb{X}\}}(X_s)  dR^c_s$.
\end{remark}
\begin{remark}\label{tfremark}
It is clear that under Assumption~\ref{c1}, the function $\tilde{f}$ defined by \eqref{tf} satisfies the following growth condition:
\begin{equation*}
    -c_4\left(1+|x|^2+\int_{\overline{\mathbb{X}}}|z|^2\mu(\mathrm{d}z)+|a|^2\right) + c_2|a|^{q} \le \tilde{f}(t,x,\mu,a) \le c_4\left(1+|x|^2+\int_{\overline{\mathbb{X}}}|z|^2\mu(\mathrm{d}z)+|a|^q\right)
\end{equation*}
for some new constant $c_4$ that depends on $c_1$, $c_3$, $\|\pi\|_{\infty}$, and $\|h\|_{\infty}$. This follows from the fact that for $(t,x,\mu,a) \in [0,T]\times\overline{\mathbb{X}}\times\mathcal{P}_2(\overline{\mathbb{X}})\times\mathbb{A}$,
\begin{align*}
    |G(t,x,\mu,a)\pi(t,a)| 
    &\le \|\pi\|_{\infty}\|h\|_{\infty} \left| \Pi_{\overline{\mathbb{X}}}\bigl(x + \beta(t,x,\mu,a)\bigr) - \bigl(x + \beta(t,x,\mu,a)\bigr) \right| \\
    &\le \|\pi\|_{\infty}\|h\|_{\infty} \left( \left| \Pi_{\overline{\mathbb{X}}}\bigl(x + \beta(t,x,\mu,a)\bigr) - x \right| + |\beta(t,x,\mu,a)| \right) \\
    &\le 2\|\pi\|_{\infty}\|h\|_{\infty} |\beta(t,x,\mu,a)|,
\end{align*}
where we have used the property \eqref{lipPi} in the last inequality. 
\end{remark}

\subsection{Linear programming formulation}\label{sect:3}
We next introduce the LP formulation of the previous MFG problem. To this end, we begin with a preliminary definition.
\begin{definition}\label{def:lpocm}
	Fix a measurable flow of measures $\mu\in B_2$.
	Let \(\Dc_{P}(\mu)\) be the set of triples \((\nu,m,\lambda) \in  {\Pc_2(\overline{\mathbb{X}}) \times V_2 \times \Mc_{+}([0,T]\times \partial\mathbb{X})}\) such that, for all {\(u\in C^{1,2}_b([0,T]\times \overline{\mathbb{X}})\)},
	\begin{align}\label{lpconstraint}
		\int_{\overline{\mathbb{X}}}u(T,x)\,\nu(dx)
		= \int_{\overline{\mathbb{X}}}u(0,x)\,m^*_{0}(dx)
		&+ \int_{[0,T]\times \overline{\mathbb{X}}\times\mathbb{A}} \bigl(\partial_t u+\Lc u\bigr)(t,x,\mu_t,a)\,m_t(dx,da)dt \\
		& + \int_{[0,T]\times \partial \mathbb{X}}\mathcal{A}u(t,x)\,\lambda(dt,dx), \nonumber
	\end{align}
	where
	\begin{align*}
		\Lc u(t,x,\mu,a)
		&:= b(t,x,\mu,a)\cdot\partial_x u(t,x)
		+ \frac{1}{2}\mathrm{tr}\bigl(\sigma\sigma^{\top}(t,x,\mu,a)\,\partial^2_{xx} u(t,x)\bigr) \\
		& + \bigl[ u(t,\Pi_{\overline{\mathbb{X}}}\bigl(x + \beta(t,x,\mu,a)\bigr)) - u(t,x) - \beta(t,x,\mu,a)\cdot\partial_x u(t,x) \bigr] \pi(t,a),
	\end{align*}
	and $
	\mathcal{A}u(t,x) := m(x)\cdot \partial_x u(t,x).
	$
\end{definition}
Then, we have
\begin{definition}[LP formulation of the MFG problem]\label{lpdef} 
		 For a given pair $(\mu,\rho)\in B_2\times\mathcal{P}_2(\overline{\mathbb{X}})$, we define the linear programming objective functional $\mathcal{J}^{L}_{P}[\mu,\rho]:\Dc_{P}(\mu)\to\mathbb{R}$ by
		\begin{align*}
			\mathcal{J}^{L}_{P}[\mu,\rho](\nu,m,\lambda)
			&= \int_{[0,T]\times \overline{\mathbb{X}}\times\mathbb{A}} \tilde{f}(t,x,\mu_t,a)\,m_t(dx,da)\,dt
			+ \int_{[0,T]\times \partial\Xb} h(t,x)\,\lambda(dt,dx) + \int_{\overline{\mathbb{X}}} g(x,\rho)\,\nu(dx), 
		\end{align*}
        where $\tilde{f}$ is defined by \eqref{tf}.
		We say that \((\mu^*,\rho^*,\nu^*,m^*,\lambda^*)\) is an \emph{linear programming mean field equilibrium} (LPMFE), if it satisfies the following two conditions:
		\begin{itemize}
			\item[(i)] \((\nu^*,m^*,\lambda^*) \in \Dc_{P}(\mu^*)\), and for every \((\nu,m,\lambda)\in \Dc_{P}(\mu^*)\),
			\[
			\mathcal{J}^{L}_{P}[\mu^*,\rho^*](\nu^*,m^*,\lambda^*)
			\;\le\;
			\mathcal{J}^{L}_{P}[\mu^*,\rho^*](\nu,m,\lambda).
			\]
			\item[(ii)] The following consistency condition holds
			\[
			m^{*\mathbb{X}}_{t} = \mu^*_{t}, \,\ a.e.\,\ t\in[0,T],
			\quad
			\nu^* = \rho^*,
			\]
			where \(m^{*\mathbb{X}}_{t}\) denotes the marginal of \(m^*_t\) on \(\overline{\mathbb{X}}\).
		\end{itemize}
         Because the consistency condition implies that the mean-field terms $(\mu^*,\rho^*)$ are determined by $(\nu^*, m^*, \lambda^*)$, we will often omit them and simply refer to the triple $(\nu^*, m^*, \lambda^*)$ as an LPMFE.
	\end{definition}
{\begin{remark}\label{remark:s_to_r}
	Here, let us discuss the intuition behind the LP formulation.  
	Let \(U \in \mathcal{S}(\mu)\) with $\mu \in B_2$, and define the associated occupation measures by
	\begin{align*}
		&\nu := \mathbb{P} \circ X_T^{-1}, \\
		&m_t(B\times C) := \mathbb{E}^\mathbb{P}\!\left[\mathbf{1}_{B}(X_t)\mathbf{1}_{C}(\alpha_t)\right] = \mathbb{E}^\mathbb{P}\!\left[\mathbf{1}_{B}(X_{t-})\mathbf{1}_{C}(\alpha_t)\right], \quad B \in \mathcal{B}(\overline{\mathbb{X}}),\,\,C \in \mathcal{B}(\mathbb{A}),\,\ t-a.e., \\
		&\lambda(D) := \mathbb{E}^\mathbb{P}\!\left[\int_0^T \mathbf{1}_{D}(t,X_t)\,dR^c_t\right] = \mathbb{E}^\mathbb{P}\!\left[\int_0^T \mathbf{1}_{D}(t,X_{t-})\,dR^c_t\right], \quad D \in \mathcal{B}([0,T]\times\partial\mathbb{X}),
	\end{align*}
	where we have used the fact that \(X_t(\omega) = X_{t-}(\omega)\) for a.e. \(t\in[0,T]\) and for all \(\omega\in\Omega\), together with the continuity of $R^c$.   From the integrability condition in \eqref{integ1}, it follows that $(\nu,m,\lambda)\in \Pc_2(\overline{\Xb})\times V_2\times\Mc_{+}([0,T]\times\partial\Xb)$.  Then, the cost functional can be expressed as
	\[
	\mathcal{J}^S[\mu,\rho](U)
	= \int_{[0,T]\times\overline{\mathbb{X}}\times\mathbb{A}} \tilde{f}(t,x,\mu_t,a)\,m_t(dx,da)dt
	+ \int_{0}^{T}\!\!\int_{\partial\mathbb{X}} h(t,x)\,\lambda(dt,dx)
	+ \int_{\overline{\mathbb{X}}} g(x,\rho)\,\nu(dx).
	\]
	On the other hand, applying Itô’s formula to any test function \(u \in C_b^{1,2}([0,T]\times\overline{\mathbb{X}})\) yields
	\begin{align*}
		u(T,X_T) &= u(0,X_0)
		+ \int_{0}^{T} \bigl(\partial_t u + \Lc u\bigr)(t,X_{t-},\mu_t,\alpha_t)\,dt
		+ \int_{0}^{T} m(X_t)\cdot \partial_x u(t,X_{t-})\,dR^c_t \\
		&\quad + \int_{0}^{T} (\sigma\cdot\partial_x u)(t,X_t,\mu_t,\alpha_t)\,dW_t
		+ \int_{0}^{T}\bigl[u(t,\proj (X_{t-}+\beta(t,X_{t-},\mu_t,\alpha_t))) - u(t,X_{t-})\bigr]\,d\tilde{N}_t.
	\end{align*}
     Taking expectations and using the growth condition on \(\sigma\), together with the boundedness of \(u\) and \(\partial_x u\), we get $(\nu,m,\lambda) \in \Dc_{P}(\mu)$.
	
	It is clear that the LP formulation is a relaxation of the weak formulation with strict controls, as every strict control in \(\mathcal{S}(\mu)\) induces a triple in \(\Dc_{P}(\mu)\). In the sequel, we shall show that the LP formulation is, in fact, equivalent to the weak formulation with relaxed controls (see Definition~\ref{weak_relax}).
\end{remark}}

		\section{Equivalence with the Weak Relaxed Control Formulation}\label{sec:eqv}
		In this section, we establish the equivalence between LPMFEs and MFEs in the weak formulation defined below. 
        \begin{definition}[Mean field equilibrium  with relaxed control]\label{weak_relax}
        	For a given measurable flow of measures $\mu \in B_2$, define  \(\mathcal{R}(\mu)\) the set of  tuples $U=(\Omega,\mathcal{F},\mathbb{F},\mathbb{P},M,\mathcal{N}, \Lambda, X, R)$ 
        such that \((\Omega,\mathcal{F},\mathbb{F},\mathbb{P})\) is a filtered probability space, \(\Lambda\)  is an \(\mathbb{F}\)-predictable process with values in $\Pc(\Ab)$, \(M = (M^1,\dots,M^d)\) is a \(d\)-dimensional orthogonal, continuous, \(\mathbb{F}\)-adapted martingale measures on \([0,T]\times\mathbb{A}\) with intensity \(\Lambda_t(da)\,dt\), \(\mathcal{N}\) is a random counting measure on \([0,T]\times\mathbb{A}\) with compensator \(\pi(t,a)\Lambda_t(da)\,dt\),  and $(X,R)$ is a pair of $\mathbb{F}$-progressively measurable processes such that $X$ is an $\overline{\mathbb{X}}$-valued process, and $R$ is a non-decreasing, RCLL process with $R_0=0$.  $(X,R)$ satisfies the following equations:
        \begin{eqnarray*}
        	dX_t 
        	&= &\left(\int_{\Ab} b\bigl(t, X_t, \mu_t, a\bigr)\Lambda_t(da)\right)dt 
        	+ \int_{\Ab}\sigma\bigl(t, X_t, \mu_t, a\bigr)M(dt,da) \\
        	&&+ \int_{\Ab}\beta\bigl(t, X_{t-}, \mu_t, a\bigr)\tilde{\mathcal{N}}(dt,da)
        	+ m(X_t)\,dR_t,
        	\quad
        	\mathbb{P} \circ X_0^{-1} = m_0^*,
        \end{eqnarray*}
        and $R_t = \int_0^t \mathbf{1}_{\{\partial \Xb\}}(X_s)dR_s$, where $\tilde{\mathcal{N}}$ denotes the compensated random measure, i.e., $\tilde{\mathcal{N}}(dt,da)=\mathcal{N}(dt,da)-\pi(t,a)\Lambda_t(da)dt$. Moreover, the following integrability condition holds:
        \begin{align}\label{integ2}
            \Eb^{\Pb}\left[\int_0^T\int_{\Ab}(|X_t|^2+|a|^2)\Lambda_t(da)dt+|X_T|^2+R^c_T\right]<\infty.
        \end{align} 
        For a given pair $(\mu,\rho)\in B_2\times\mathcal{P}_2(\overline{\mathbb{X}})$,
        define $\mathcal{J}^R[\mu,\rho]:\mathcal{R}(\mu)\rightarrow\Rb$  by 
        \[
        \mathcal{J}^R[\mu,\rho](U)
        = \mathbb{E}^{\mathbb{P}}\Biggl[
        \int_{0}^{T}\int_{\Ab} f\bigl(t, X_{t}, \mu_{t}, a\bigr)\Lambda_{t}(da)dt
        +
        \int_{0}^{T} h\bigl(t, X_{t}\bigr)\,dR_{t}
        +
        g\bigl(X_{T}, \rho\bigr)
        \Biggr].
        \]
        The value of the optimization problem  under relaxed control associated to $(\mu,\rho)$ is 
        \[
        V^{R}(\mu,\rho) :=\; \inf_{U\in\mathcal{R}(\mu)} \mathcal{J}^R[\mu,\rho](U).
        \]
        Moreover, we say that $U^*=(\Omega,\mathcal{F},\mathbb{F},\mathbb{P},M,\mathcal{N}, \Lambda, X, R)$ is an mean field equilibrium in the weak relaxed control formulation if 
        \(U^*\in\mathcal{R}(\mu^*)\) and $\mathcal{J}^R[\mu^*,\rho^*](U^*) = V^{R}(\mu^*,\rho^*)$, where $\mu^*_t = \mathbb{P} \circ (X_t)^{-1}$ for a.e. $t \in [0,T]$ and $\rho^*=\mathbb{P} \circ (X_T)^{-1}$.
        \end{definition}
{In what follows, we show that any \((\nu, m, \lambda) \in \mathcal{D}_{P}(\mu)\) admits a probabilistic representation in terms of a reflected  diffusion process with jump under a relaxed control. 
		\begin{prop}\label{thm:prob-rep-R0}
			 Suppose that the coefficients $b, \sigma, \beta, \pi$ are measurable functions such that $\pi$ is bounded, while $b, \sigma, \beta$ satisfy the growth condition \eqref{bc}. Let $(\nu, m, \lambda) \in \mathcal{D}_{P}(\mu)$ for some $\mu \in B_2$.
          Furthermore, let \(v_{t,x}(da)\) be a stochastic kernel such that
			 \[
			 m_t( dx, da)dt = v_{t,x}(da)\, m^{\mathbb{X}}_t(dx)\, dt,
			 \]
			 where \(m^{\mathbb{X}}_t\) denotes the marginal of \(m_t\) on \(\overline{\mathbb{X}}\).  
			 Then, there exists a filtered probability space 
			 \((\Omega, \mathcal{F}, \mathbb{F}, \mathbb{P})\) supporting  \(d\)-dimensional  orthogonal continuous \(\mathbb{F}\)-martingale measures 
			 	\(M=(M^1,\ldots,M^d)\) on \([0,T]\times\mathbb{A}\) 
			 	with intensity \(v_{t, X_{t-}}(da)\,dt\),
			 	 a random counting measure \(\mathcal{N}\) on \([0,T]\times\mathbb{A}\)  
			 	with the compensator \(\pi(t,a)v_{t, X_{t-}}(da)\,dt\),
			 	a pair of \(\mathbb{F}\)-progressively measurable processes \((X,R)\) 
			 	such that \(X\) takes values in \(\overline{\mathbb{X}}\) and 
			 	\(R\) is non-decreasing, RCLL, with \(R(0)=0\).  
			 \((X,R)\) satisfies the following equations
			\begin{eqnarray*}
				dX_t &=& \left(\int_{\mathbb{A}} b(t, X_t, \mu_t, a)\, v_{t, X_t}(da)\right) dt 
				+ \int_{\mathbb{A}} \sigma(t, X_t, \mu_t, a)\, M(dt,da) \\
				&& + \int_{\mathbb{A}} \beta(t, X_{t-}, \mu_t, a)\, \tilde{\mathcal{N}}(dt,da) 
				+ m(X_t)\, dR_t, 
				\,\, \, \, \mathbb{P} \circ X_0^{-1} = m_0^*,
			\end{eqnarray*}
            and $R_t = \int_0^t \mathbf{1}_{\{\partial\mathbb{X}\}}(X_s) dR_s$,
			 where \(\tilde{\mathcal{N}}(dt,da) := \mathcal{N}(dt,da) -\pi(t,a) v_{t, X_{t-}}(da)dt\) denotes the compensated random measure.  Moreover, 
			 \begin{align*}
			 	&\nu = \mathbb{P}\circ X_T^{-1}, \\
			 	&m_{t}(B\times C)
			 	= \mathbb{E}^{\mathbb{P}}\bigl[\mathbf{1}_{B}(X_t)\,v_{t,X_t}(C)\bigr]=\mathbb{E}^{\mathbb{P}}\bigl[\mathbf{1}_{B}(X_{t-})\,v_{t,X_{t-}}(C)\bigr], 
			 	\quad B\in\mathcal{B}(\overline{\mathbb{X}}),\; C\in\mathcal{B}(\mathbb{A}),\; t-a.e.,\\
			 	&\lambda(D)
			 	= \mathbb{E}^{\mathbb{P}}\Bigl[\int_0^T \mathbf{1}_{D}(t,X_t)\,dR^c_t\Bigr]= \mathbb{E}^{\mathbb{P}}\Bigl[\int_0^T \mathbf{1}_{D}(t,X_{t-})\,dR^c_t\Bigr], 
			 	\quad D\in\mathcal{B}([0,T]\times\partial\mathbb{X}).
			 \end{align*}
		\end{prop}
		\begin{proof}
			\textit{Step-1: Construction of the operator and the stationary measure}.  We extend the kernel \(v_{t,x}\) to \((\mathbb{R}_{+} \times \mathbb{R}^d) \setminus ([0, T] \times \overline{\mathbb{X}})\) by setting $v_{t,x} = \delta_{a_0}$
			for an arbitrary \(a_0 \in \mathbb{A}\). Given a measurable flow $\mu\in B_2$, we define the functions \(\bar{b}: \mathbb{R}_+ \times \mathbb{R}^d \to \mathbb{R}^d\), \(\bar{a}: \mathbb{R}_+ \times \mathbb{R}^d \to \mathbb{M}_{+}^{d\times d}\), $\bar{\pi}:\mathbb{R}_+ \times \mathbb{R}^d \to \mathbb{R}^d$ and $\overline{\pi\beta}:\mathbb{R}_+ \times \mathbb{R}^d \to \mathbb{R}^d$ by
			\begin{align*}
				\bar{b}(t, x) &= 
				\begin{cases}
					\displaystyle \int_{\mathbb{A}} b\bigl(t, x, \mu_t, a\bigr)\, v_{t,x}(da), & \text{if } (t, x) \in [0, T] \times \mathbb{R}^d, \\
					0, & \text{otherwise,}
				\end{cases} \\
				\bar{a}(t, x) &=
				\begin{cases}
					\displaystyle  \int_{\mathbb{A}} \sigma\sigma^{\top}\bigl(t, x, \mu_t, a\bigr)\, v_{t,x}(da), & \text{if } (t, x) \in [0, T] \times \mathbb{R}^d, \\
					I_{d\times d}, & \text{otherwise,}
				\end{cases}\\
				\bar{\pi}(t, x) &= 
				\begin{cases}
					\displaystyle \int_{\mathbb{A}} \pi\bigl(t, a\bigr)\, v_{t,x}(da), & \text{if } (t, x) \in [0, T] \times \mathbb{R}^d, \\
					0, & \text{otherwise,}
				\end{cases}
                \\
				\overline{\pi\beta}(t, x) &=
				\begin{cases}
					\displaystyle  \int_{\mathbb{A}}\pi(t,a) \beta\bigl(t, x, \mu_t, a\bigr)\, v_{t,x}(da), & \text{if } (t, x) \in [0, T] \times \mathbb{R}^d, \\
					0, & \text{otherwise.}
				\end{cases}
			\end{align*}
		    Define the measures
		    \begin{align*}
		    	&\tilde{\nu}(B \times C):=\delta_{\{T\}}(B)\nu(C\cap \overline{\Xb}), \quad B \in \mathcal{B}\left(\mathbb{R}_{+}\right), \, C \in \mathcal{B}(\mathbb{R}^d), \\
		    	&\tilde{m}(B \times C):=\int_{B \cap[0, T]} m^{\Xb}_t(C\cap \overline{\Xb} ) d t, \quad B \in \mathcal{B}\left(\mathbb{R}_{+}\right), \, C \in \mathcal{B}(\mathbb{R}^d), \\
		    	&\tilde{\lambda}(B\times C):=\lambda((B \cap[0, T])\times(C\cap \partial\Xb)),  \quad B \in \mathcal{B}\left(\mathbb{R}_{+}\right), \, C \in \mathcal{B}(\mathbb{R}^d),\\
		    	&\tilde{m}_0(B):=m_0^*(B\cap\overline{\Xb}), \quad B \in \mathcal{B}(\mathbb{R}^d).
		    \end{align*}
		    Then, it follows that {$\left(\tilde{\nu}, \tilde{m}, \tilde{\lambda},\tilde{m}_0\right) \in \mathcal{P}\left(\mathbb{R}_{+} \times \mathbb{R}^d\right) \times \mathcal{M}_{+}\left(\mathbb{R}_{+} \times \mathbb{R}^d\right) \times \mathcal{M}_{+}\left(\mathbb{R}_{+} \times \mathbb{R}^d\right)\times \mathcal{P}(\mathbb{R}^d)$}. We also define the transition  kernel \(\bar{Q} : \mathbb{R}_{+} \times \mathbb{R}^d \to \mathcal{P}(\mathbb{R}^d)\) by
            \begin{align*}
		    	&\bar{Q}(t,x,dy)=\begin{cases}
		    		\displaystyle \frac{\int_{\mathbb{A}} \pi(t,a)\delta_{\proj(x+\beta(t,x,\mu_t,a))}(dy)\, v_{t,x}(da)}{\bar{\pi}(t,x)}, & \text{if } (t, x) \in [0, T] \times \mathbb{R}^d, \\
		    		\delta_{x_0}, & \text{otherwise,}
		    	\end{cases} 
		    \end{align*}
		  where \(x_0 \in \mathbb{R}^d\) is arbitrary. Finally, we define the operators
		    \begin{align*}
		    	&\hat{\mathcal{L}}(\gamma \phi)(t, x)=\gamma^{\prime}(t) \varphi(x)+\gamma(t)\left[(\bar{b}(t, x)-\overline{\pi\beta}(t,x))\cdot\nabla_x \varphi(x)+\frac{1}{2}\mathrm{tr}(\bar{a}(t, x) \nabla^2_{xx}\varphi(x))\right]\\&\qquad\qquad\qquad+\bar{\pi}(t,x)\int_{\Rb^d}\gamma(t)\left(\varphi(y)-\varphi(x)\right)	\bar{Q}(t,x,dy),\\
		    	&\hat{\Ac}(\gamma \phi)(t, x)=\gamma(t)m(x)\cdot\nabla_x\varphi(x),
		    \end{align*}
		    for all $\gamma \in C_b^1\left(\mathbb{R}_{+}\right), \varphi \in C_b^2(\mathbb{R}^d)$. Then, based on the definition of $\left(\tilde{\nu}, \tilde{m}, \tilde{\lambda},\tilde{m}_0\right)$, we have
		    \begin{align*}
		    	\int_{\mathbb{R}_{+} \times \mathbb{R}^d} \gamma(t) \varphi(x) \tilde{\nu}(d t, d x)=\gamma(0) \int_{\mathbb{R}^d} \varphi(x) \tilde{m}_0(d x)+\int_{\mathbb{R}_{+} \times \mathbb{R}^d} \hat{\mathcal{L}}(\gamma \varphi)(t, x) \tilde{m}(d t, d x)\\+\int_{\mathbb{R}_{+} \times \mathbb{R}^d} \hat{\mathcal{A}}(\gamma \varphi)(t, x) \tilde{\lambda}(d t, d x).
		    \end{align*}
		    Let $U=\{0,1\}$ and define new operators $\overline{\mathcal{L}}$ and $\overline{\mathcal{A}}$ on the domain$$
		   \mathcal{D}=\left\{\zeta \gamma \varphi: \zeta \in C_b^1\left(\mathbb{R}_{+}\right), \gamma \in C_b^1\left(\mathbb{R}_{+}\right), \varphi \in C_b^2(\mathbb{R}^d)\right\}
		    $$ such that
		    \begin{align*}
		    	\overline{\mathcal{L}}(\zeta \gamma \varphi)(r, s, x, u):= & u \zeta(r) \hat{\mathcal{L}}(\gamma \varphi)(s, x) \\
		    	& +(1-u)\left[\zeta(0) \gamma(0) \int_{\mathbb{R}^d} \varphi(x) \tilde{m}_0(d x)-\zeta(r) \gamma(s) \varphi(x)+\zeta^{\prime}(r) \gamma(s) \varphi(x)\right],\\
		    	\overline{\mathcal{A}}(\zeta \gamma \varphi)(r, s, x):=&\zeta(r)\hat{\Ac}(\gamma \phi)(s, x).
		    \end{align*}
		    Define $\bar{m} \in \mathcal{P}\left(\mathbb{R}_{+} \times \mathbb{R}_{+} \times \mathbb{R}^d \times U\right)$ and $\bar{\lambda}\in \Mc_{+}(\mathbb{R}_{+} \times \mathbb{R}_{+} \times \mathbb{R}^d )$ by
		    \begin{align*}
		    	&\bar{m}(d r, d s, d x, d u):=K^{-1}\left[\delta_1(d u) \delta_0(d r) \tilde{m}(d s, d x)+\delta_0(d u) e^{-r} \indicator{(0,\infty)}(r) d r \tilde{\nu}(d s, d x)\right],\\
		    	&\bar{\lambda}(dr,ds,dx):=K^{-1}\delta_{\{0\}}(r)\tilde{\lambda}(ds,dx),
		    \end{align*}
		    where $K=\tilde{m}\left(\mathbb{R}_{+} \times \mathbb{R}^d\right)+1$ is a normalizing constant. The conditional distribution of $u$ given $(r, s, x)$ under $\bar{m}$ is
		    $$
		    \bar{\eta}(r, s, x, d u)=\delta_1(d u) \indicator{0}(r)+\delta_0(d u) \indicator{(0,\infty)}(r).
		    $$
		   Then, the integration by parts readily yields  
           $$\int \overline{\mathcal{L}}(\zeta \gamma \varphi) d \bar{m}+\overline{\mathcal{A}}(\zeta \gamma \varphi) d \bar{\lambda}=0,$$ 
           for all $\zeta \gamma \varphi \in \mathcal{D}$.

		\textit{Step-2: Verification of the conditions to apply \cite[Corollary 1.10]{kurtz2017linearprogrammingformulationssingular}.} 
		Define the operators $\mathcal{L}_0$ and $\mathcal{A}_0$ on the domain $\mathcal{D}$ by
		\begin{align*}
			\mathcal{L}_0(\zeta \gamma \varphi)(r, s, x, u, v) 
			&= u \, \zeta(r) \bigg( \gamma^{\prime}(s) \varphi(x) 
			+ \gamma(s) \bigg[ v_1 \cdot \nabla_x \varphi(x) 
			+ \frac{1}{2} \mathrm{tr}\!\big(v_2 \nabla^2_{xx} \varphi(x)\big) \\
			&\quad\quad + v_3 \int_{\mathbb{R}^d} \big( \varphi(y) - \varphi(x) \big) \, v_4(dy) \bigg] \bigg) \\
			&\quad + (1-u) \bigg[ \zeta(0) \gamma(0) \int_{\mathbb{R}^d} \varphi(x) \, \tilde{m}_0(dx) 
			- \zeta(r) \gamma(s) \varphi(x) + \zeta^{\prime}(r) \gamma(s) \varphi(x) \bigg], \\
			\mathcal{A}_0(\zeta \gamma \varphi)(r, s, x, z) 
			&= \zeta(r) \gamma(s) \, z \cdot \nabla_x \varphi(x),
		\end{align*}
		where $v = (v_1, v_2, v_3, v_4) \in \mathbb{V}:=\mathbb{R}^d \times \mathbb{M}^{d\times d}_{+} \times \mathbb{R}_{+} \times \mathcal{P}(\mathbb{R}^d)$  
		and $z \in \mathbb{R}^d$. In order to apply \cite[Corollary 1.10]{kurtz2017linearprogrammingformulationssingular}, let us define the transition kernels
		\[
		\eta_0: \mathbb{R}_+ \times \mathbb{R}_+ \times \mathbb{R}^d \times U 
		\to \mathcal{P}\big(\mathbb{V}\big),
		\quad
		\eta_1: \mathbb{R}_+ \times \mathbb{R}_+ \times \mathbb{R}^d \to \mathcal{P}(\mathbb{R}^d),
		\]
		by
		\begin{align*}
			&\eta_0(r, s, x, u, dv) 
			= \delta_{\tilde{b}(s, x)}(d v_1) \,
			\delta_{\bar{a}(s, x)}(d v_2) \,
			\delta_{\bar{\pi}(s,x)}(d v_3) \,
			\delta_{\bar{Q}(s, x, \cdot)}(d v_4), \\
			&\eta_1(r, s, x, dz) 
			= \delta_{m(x)}(dz),
		\end{align*}
		where $\tilde{b}(s, x) = \bar{b}(s, x) - \overline{\pi\beta}(s, x)$. Then
		\begin{align*}
			&\overline{\mathcal{L}}(\zeta \gamma \varphi)(r, s, x, u) 
			= \int_{\mathbb{V}}
			\mathcal{L}_0(\zeta \gamma \varphi)(r, s, x, u, v) \,
			\eta_0(r, s, x, u, dv), \\
			&\overline{\mathcal{A}}(\zeta \gamma \varphi)(r, s, x) 
			= \int_{\mathbb{R}^d} \mathcal{A}_0(\zeta \gamma \varphi)(r, s, x, z) \, 
			\eta_1(r, s, x, dz).
		\end{align*}
			{Let $\psi_0(r, s, x, u, v)=1+u\left(|v_1|+|v_2|+v_3\right)$} and $\psi_1(r,s,x,z)=1+|z|$. Using the growth conditions on $b$, $\sigma$, $\beta$, and $\pi$, we have 
			\begin{align*}
					&\int_{\mathbb{R}_{+} \times \mathbb{R}_{+} \times \mathbb{R}^d \times U \times\mathbb{V}} \psi_0(r, s, x, u, v) \eta_0(r, s, x, u, d v) \bar{m}(d r, d s, d x, d u)\\
                    &\qquad\leq 1+K^{-1}\int_{\Rb_{+}\times\Rb^d}\left(|\tilde{b}(s, x)|+|\bar{a}(s,x)|+\|\pi\|_{\infty}\right)\tilde{m}(ds,dx)\\
                    &\qquad\leq 1+C\left(\int_{[0,T]\times\overline{\Xb}\times\Ab}(1+|x|^2+|a|^2+\|\mu\|^2_{T})m_t(dx,da)dt\right)<\infty,
                    \\
					&\int_{\mathbb{R}_{+} \times \mathbb{R}_{+} \times \mathbb{R}^d \times \Rb^d} \psi_1(r, s, x,z) \eta_1(r, s, x,  d z) \bar{\lambda}(d r, d s, d x) \leq 2 K^{-1}\lambda([0,T]\times\partial\Xb)<\infty,
			\end{align*}
where $C$ is a constant that depends on $K$, $\|\pi\|_{\infty}$, and the constant $c_3$ in \eqref{bc} of Assumption \ref{c1}.
The remaining task is to verify that 
\((\psi_0, \psi_1, \mathcal{L}_0, \mathcal{A}_0)\) 
fulfills\cite[Condition~1.3]{kurtz2017linearprogrammingformulationssingular} or equivalently,  \cite[Condition~1.2]{kurtz_stationary_2001}. For the reader’s convenience, we restate these as Condition \ref{cond:B1} in Appendix \ref{sec:Condition}.
The verification of \textup{(i)} and \textup{(ii)} is straightforward. To show \textup{(iii)}, it suffices to show that one can find a countable subset of 
\(C_b^1(\mathbb{R}_{+})\) 
approximating any function in \(C_b^1(\mathbb{R}_{+})\) 
in the topology of bounded pointwise convergence 
(the same holds for \(C_b^2(\mathbb{R}^d)\))\footnote{A sequence \(\{\varphi_n\}_{n=1}^\infty \subset C_b^2(\mathbb{R}^d)\) 
	is said to converge to \(\varphi \in C_b^2(\mathbb{R}^d)\) in the topology of 
	\emph{bounded pointwise convergence} if, for every \(x \in \mathbb{R}^d\) and 
	for each \(j \in \{0,1,2\}\), the \(j\)-th derivatives satisfy 
	\(\varphi_n^{(j)}(x) \to \varphi^{(j)}(x)\) as \(n \to \infty\), 
	and there exists a constant \(M<\infty\) such that 
	\(\sup_{n} \|\varphi_n\|_{C^2} \le M\), where 
	\(\|\varphi_n\|_{C^2} := \|\varphi_n\|_{\infty} + \|\nabla\varphi_n\|_{\infty} + \|\nabla^2\varphi_n\|_{\infty}\).}. 
The proof is deferred to Lemma~\ref{lemma:separable}. 

		   Let us next prove that, for each \((u, v, z) \in U \times \mathbb{V} \times \mathbb{R}^d\),  
		   the operators  
		   \[
		   L_{u,v}(\zeta \gamma \varphi)(r,s,x) := \mathcal{L}_0(\zeta \gamma \varphi)(r,s,x,u,v),
		   \quad
		   A_{z}(\zeta \gamma \varphi)(r,s,x) := \mathcal{A}_0(\zeta \gamma \varphi)(r,s,x,z),
		   \]
		   are pre-generators. 
		    We first treat the operator \(A_{z}\).  For \(i = (r, s, x) \in \mathbb{R}_{+} \times \mathbb{R}_{+} \times \mathbb{R}^d\), define the processes
		   \[
		   \mathcal{R}^i_t := r, \qquad S^i_t := s, \qquad \text{and} \qquad X^i_t := x + zt.
		   \]
		   For each \(t \geq 0\) and \(i \in \mathbb{R}_{+} \times \mathbb{R}_{+} \times \mathbb{R}^d\), define the measures
		   $$
		   \nu_t^i(B \times C \times D)=\delta_{\mathcal{R}_t^i}(B) \delta_{S_t^i}(C) \delta_{X_t^i}(D), \quad B \in \mathcal{B}\left(\mathbb{R}_{+}\right), \quad C \in \mathcal{B}\left(\mathbb{R}_{+}\right), \quad D \in \mathcal{B}(\mathbb{R}^d).
		   $$
		Because  \(\nu^i = \left(\nu_t^i\right)_{t \geq 0}\) solves the forward equation associated with \((A_{z}, \delta_i)\) and is right-continuous at zero due to time-continuity of each process, it follows from Proposition \ref{prop:B2} that \(A_{z}\) is a pre-generator.
		   
For the operator $L_{u,v}$, suppose first that $u = 1$. Then
\[
L_{1,v}(\zeta \gamma \varphi)(r,s,x)
= \zeta(r) \bigg( \gamma^{\prime}(s) \varphi(x) 
+ \gamma(s) \Big[ v_1 \cdot \nabla_x \varphi(x) 
+ \frac{1}{2} \mathrm{tr}\!\big(v_2 \nabla^2_{xx} \varphi(x)\big) 
+  v_3\int_{\mathbb{R}^d} \big( \varphi(y) - \varphi(x) \big) \, v_4(dy) \Big] \bigg).
\]
By  \cite[Proposition~10.2 (p.~256)]{ethier_markov_1986}, for any initial probability distribution 
$\eta$ on $\mathbb{R}_{+} \times \mathbb{R}_{+} \times \mathbb{R}^d$,  
the martingale problem for $(L_{1,v}, \eta)$ with c\`adl\`ag paths admits a solution,  
provided that for any such $\eta$ there exists a solution to the martingale problem for 
$(\tilde{L}, \eta)$ with c\`adl\`ag paths, where
\[
\tilde{L}(\zeta \gamma \varphi)(r,s,x)
= \zeta(r) \Big( \gamma^{\prime}(s) \varphi(x) 
+ \gamma(s) \big[ v_1 \cdot \nabla_x \varphi(x) 
+ \frac{1}{2} \mathrm{tr}\!\big(v_2 \nabla^2_{xx} \varphi(x)\big) \big] \Big).
\]
It is clear that a solution to the martingale problem for $(\tilde{L}, \eta)$ with c\`adl\`ag paths exists.  
Indeed, let $(\Omega, \mathcal{F}, \mathbb{P})$ be a probability space supporting a standard $d$-dimensional Brownian motion $(W_t)_{t \geq 0}$ and an initial random vector $(\mathcal{R}_0, S_0, X_0)$ with law $\eta$, independent of $W$. Define the process $Y_t := (\mathcal{R}_t, S_t, X_t)$ as the solution to the SDE system
\begin{equation*}
    \begin{cases}
	d\mathcal{R}_t = 0, \
	dS_t = dt, \\
	dX_t = v_1 \, dt + v_2^{1/2} \, dW_t,
\end{cases}
\end{equation*}
with initial condition $Y_0 = (\mathcal{R}_0, S_0, X_0)$, where $v_2^{1/2}$ is the principal square root of $v_2$, i.e., $v_2^{1/2} (v_2^{1/2})^{\top} = v_2$.  By construction, $\mathbb{P} \circ Y^{-1}$ solves the martingale problem for $(\tilde{L}, \eta)$.  
Consequently, the martingale problem for $(L_{1,v}, \eta)$ with c\`adl\`ag paths admits a solution.  
This ensures the existence of a solution to the forward equation for $(L_{1,v}, \delta_i)$ that is right-continuous at zero, for any $i \in \mathbb{R}_{+} \times \mathbb{R}_{+} \times \mathbb{R}^d$, which by Proposition \ref{prop:B2} implies that $L_{1,v}$ is a pre-generator. By a similar argument, one can also show that $L_{0,v}$ is a pre-generator.

		  Finally, the set \(\mathcal{D}\) is closed under multiplication and separates points by working with bump functions.

			\textit{Step-3: Characterization of the  controlled martingale problem}. By  \cite[Corollary 1.10]{kurtz2017linearprogrammingformulationssingular}, there exist a complete filtered probability space \((\Omega, \mathcal{\tilde{F}}, \tilde{\Fb}, \mathbb{Q})\), a $\tilde{\Fb}$-progressive  stationary \(\mathbb{R}_+ \times \mathbb{R}_+ \times \mathbb{R}^d\)-valued process \((\mathcal{R}, S, Y)\), and a $\tilde{\Fb}$-progressive random measure \(\overline{\Gamma}\) having stationary increments such that, for all \(\zeta \gamma \varphi \in \mathcal{D}\), the process 
			\begin{align*}
					\zeta\left(\mathcal{R}_t\right) \gamma\left(S_t\right) \varphi\left(Y_t\right)&-\int_0^t \int_U \overline{\mathcal{L}}(\zeta \gamma \varphi)\left(\mathcal{R}_s, S_s, Y_{s}, u\right) \bar{\eta}\left(\mathcal{R}_s, S_s, Y_{s}, d u\right) d s\\&-\int_{\Rb_+\times\Rb_{+}\times\Rb^d\times[0,t]}\overline{\mathcal{A}}(\zeta \gamma \varphi)(r,s,x)\overline{\Gamma}(dr,ds,dx,dv), \  \ t\in [0, T] 
			\end{align*}
		is an \(\tilde{\Fb}\)-martingale. Moreover, it follows from the properties of \(\overline{\Gamma}\) established in   \cite[Theorem~2.2, Lemma~2.4, and Remark~2.5]{kurtz_stationary_2001} that \(\overline{\Gamma}\) is continuous. In particular, by  \cite[Theorem~2.2(d)]{kurtz_stationary_2001}, \(\overline{\Gamma}\) admits the representation
		\[
		\overline{\Gamma}\bigl(H \times [0,t]\bigr)
		= \int_{0}^{\gamma_0^{-1}(t)} \mathbf{1}_{H}\bigl(Z(s)\bigr)\,d\gamma_1(s),
		\]
		where \(\gamma_0\) and \(\gamma_1\) are continuous increasing processes, and \(Z\) is an \(\mathbb{R}_{+} \times \mathbb{R}_{+} \times \mathbb{R}^d\)-valued process.  
		To prove the continuity of \(\overline{\Gamma}\), it suffices to show that \(\gamma_0\) is strictly increasing. We shall use   \cite[Lemma~2.4]{kurtz_stationary_2001} for this purpose, which entails showing that every solution \(\bigl((\overline{\mathcal{R}},\overline{S},\overline{Y}),\xi\bigr)\) of the stopped martingale problem for \(\overline{\mathcal{A}}\) satisfies
		\[
		\xi \wedge \inf \bigl\{ t : (\overline{\mathcal{R}}_t,\overline{S}_t,\overline{Y}_t) \notin K_{1} \bigr\} = 0
		\quad\text{a.s.},
		\]
		where \(K_{1} := \{0\} \times [0,T] \times \partial\Xb\).
		
		Let \(\bigl((\overline{\mathcal{R}},\overline{S},\overline{Y}),\xi\bigr)\) be a solution of the stopped martingale problem for \(\overline{\mathcal{A}}\). That is, there exists a filtration \(\{\tilde{\mathcal{F}}_t\}\) such that \((\overline{\mathcal{R}},\overline{S},\overline{Y})\) is an  \(\{\tilde{\mathcal{F}}_t\}\)-adapted process, \(\xi\) is a \(\{\tilde{\mathcal{F}}_t\}\)-stopping time, and for each \(\zeta\gamma\phi \in \mathcal{D}\), the process
		\[
		\zeta(\overline{\mathcal{R}}_{\cdot\wedge\xi})\,\gamma(\overline{S}_{\cdot\wedge\xi})\,\phi(\overline{Y}_{\cdot \wedge\xi})
		- \zeta(\overline{\mathcal{R}}_{0})\,\gamma(\overline{S}_{0})\,\phi(\overline{Y}_{0})
		- \int_{0}^{\cdot \wedge\xi} \overline{\mathcal{A}}(\zeta\gamma\phi)(\overline{\mathcal{R}}_s,\overline{S}_s,\overline{Y}_s)\,ds
		\]
		is an \(\{\tilde{\mathcal{F}}_t\}\)-martingale. Without loss of generality, we assume \((\overline{\mathcal{R}}_0,\overline{S}_0,\overline{Y}_0) \in K_1\) with positive probability. Choosing \(\zeta = \gamma = 1\), we obtain that the process 
		\[
		\phi(\overline{Y}_{\cdot \wedge\xi}) - \phi(\overline{Y}_{0})
		- \int_{0}^{\cdot \wedge\xi} \,m(\overline{Y}_s) \cdot \nabla_x\phi(\overline{Y}_s)\,ds
		\]
		is an \(\{\tilde{\mathcal{F}}_t\}\)-martingale. Following the discussion in  \cite[Remark~2.5]{kurtz_stationary_2001}, we deduce that \(\overline{Y}\) satisfies
		\[
		\overline{Y}_{t\wedge\xi} 
		= \overline{Y}_{0} + \int_0^{t\wedge\xi} \,m(\overline{Y}_s)\,ds.
		\]
		Note  that \(m\) is the inward normal vector, it is clear that for any \(\omega\) such that \(\overline{Y}_{0}(\omega) \in \partial\Xb\) and \(\xi(\omega) > 0\),
		\[
		\xi(\omega) \wedge \inf \{t : (\overline{\mathcal{R}}_t(\omega),\overline{S}_t(\omega),\overline{Y}_t(\omega)) \notin K_{1} \}
		\;\leq\; \xi(\omega) \wedge \inf \{ t : \overline{Y}_t(\omega) \notin \partial\Xb \}
		= \inf \{ t : \overline{Y}_t(\omega) \notin \partial\Xb \}
		= 0,
		\]
		which completes the proof.

			Following the same proof as  \cite[Theorem 2.4]{helmes_linear_2007}, we can conclude the existence of a complete filtered probability space $(\Omega, \mathcal{F}, \mathbb{F}, \mathbb{P})$, where $\mathbb{F}$ satisfies the usual conditions, an $\mathbb{F}$-stopping time $\tau$ with values in $\mathbb{R}_{+}$, a process $\tilde{\mathcal{R}}$ with values in $\mathbb{R}_{+}$,  a process $\tilde{S}$ with values in $\mathbb{R}_{+}$ such that $\tilde{S}_t \indicator{t \leq \tau}=t \indicator{t \leq \tau}$, an $\mathbb{F}$-progressively measurable c\`adl\`ag process  $X$ with values in $\mathbb{R}^d$ such that $\mathbb{P} \circ X_0^{-1}=\tilde{m}_0$ and a  continuous  $\mathbb{F}$-progressively measurable  random measure $\tilde{\Gamma}$ on $\Rb_{+}\times\Rb_{+}\times\Rb^d\times\Rb_{+}$. Furthermore, it holds that
			\begin{align}\label{repre1}
				&\tilde{\nu}=\mathbb{P} \circ\left(\tau, X_\tau\right)^{-1}, \quad \tilde{m}(G)=\mathbb{E}^{\mathbb{P}}\left[\int_0^\tau \indicator{G}\left(t, X_t\right) d t\right], \quad \forall G \in \mathcal{B}\left(\mathbb{R}_{+} \times \mathbb{R}^d\right),\nonumber\\
				&\tilde{\lambda}(G)=\mathbb{E}^{\mathbb{P}}\left[\int_{\Rb_{+}\times\Rb_{+}\times\Rb^d\times [0,\tau]} \indicator{G}\left(s, x\right)  \tilde{\Gamma}(dr,ds,dx,dv)\right], \quad  \ \forall G \in \mathcal{B}\left(\mathbb{R}_{+} \times \mathbb{R}^d\right),
			\end{align}
			and the process 
			\begin{align}\label{proof:martingale1}
					\gamma\left(\tilde{S}_{\cdot \wedge \tau}\right) \varphi\left(X_{\cdot \wedge \tau}\right)-\int_0^{\cdot \wedge \tau} \hat{\mathcal{L}}(\gamma \varphi)\left(\tilde{S}_s, X_s\right) d s-\int_{\Rb_{+}\times\Rb_{+}\times\Rb^d\times[0,\cdot \wedge\tau]}\hat{\Ac}(\gamma \varphi)(s,x)\tilde{\Gamma}(dr,ds,dx,dv)
			\end{align}
			is an $\Fb$-martingale for all $\gamma \in C_b^1\left(\mathbb{R}_{+}\right), \varphi \in C_b^2(\mathbb{R}^d)$.
			Here, we have the same construction of \((\tilde{R},\tilde{S},X,\tilde{\Gamma})\)  as in \cite{helmes_linear_2007} that 
			\begin{align*}
			&\tilde{\mathcal{R}}(s) = \mathcal{R}\bigl(\sigma^0_1 + s\bigr),
			\quad
			\tilde{S}(s) = S\bigl(\sigma^0_1 + s\bigr),
			\quad
			X(s) = Y\bigl(\sigma^0_1 + s\bigr),\\
			&\tilde{\Gamma}\bigl(H\times[s,t]\bigr)
			\;=\;
			\overline{\Gamma}\bigl(H\times [\sigma^0_1 + s,\;\sigma^0_1 + t]\bigr), \quad H\in\Bc(\mathbb{R}_+\times\mathbb{R}_{+}\times\mathbb{R}^d),\,\,\ 0 \le s \le t,
			\end{align*}
		 where	$\sigma^0_1 \;=\;\inf\{\,s\ge0: \mathcal{R}(s) = 0\}$. Then, by the representation of \(\overline{\Gamma}\) in  \cite[Theorem 2.2 (f)]{kurtz_stationary_2001}, we obtain
		 \begin{align}\label{representation_skorokhod}
		 \tilde{\Gamma}\bigl(H\times[0,t]\bigr)=	\overline{\Gamma}\bigl(H\times [\sigma^0_1,\sigma^0_1 + t]\bigr)&=\int_{\sigma^0_1}^{\sigma^0_1 + t}\indicator{H}(\mathcal{R}_s,S_s,Y_s)d\bar{\lambda}(s)\nonumber\\
		 	&=\int_{0}^{t}\indicator{H}(\tilde{\mathcal{R}}_s,\tilde{S}_s,X_s)d\tilde{\lambda}(s),
		 \end{align}
	where \(\bar{\lambda}(s) := (\gamma_1 \circ \gamma_0^{-1})(s)\) and \(\tilde{\lambda}(s) := \bar{\lambda}(\sigma^0_1 + s)\) are continuous increasing processes.    A key property inherited from \cite[Theorem~2.2 (e)–(f)]{kurtz_stationary_2001} is that the random measure \(d\bar{\lambda}\) is supported on the set of times 
\(\{s : (\mathcal{R}_s, S_s, Y_s) \in K_1\}\). 
Consequently, the construction of \((\tilde{\mathcal{R}}, \tilde{S}, X, \tilde{\lambda})\) as time-shifted processes guarantees that this property is preserved; namely,
\(d\tilde{\lambda}\) is supported on the set 
\(\{s : (\tilde{\mathcal{R}}_s, \tilde{S}_s, X_s) \in K_1\}\).

			Note that
			\[
			1 = m_0^*(\overline{\mathbb{X}}) = \tilde{m}_0(\overline{\mathbb{X}}) = \mathbb{P}\bigl(X_0 \in \overline{\mathbb{X}}\bigr),
			\]
			which implies  \(X_0 \in \overline{\mathbb{X}}\)\, ,  \(\mathbb{P}\)-a.s., and therefore
			$m_0^* = \mathbb{P} \circ X_0^{-1}$.
			On the other hand, in view that
			$$
			1=\nu(\overline{\mathbb{X}})=\tilde{\nu}(\{T\} \times \overline{\mathbb{X}})=\mathbb{P}\left(\tau=T, X_\tau \in \overline{\mathbb{X}}\right)=\mathbb{P}\left(\tau=T, X_T \in \overline{\mathbb{X}}\right),
			$$
			we conclude $\tau= T, X_T \in \overline{\mathbb{X}}$, \  $ \mathbb{P}$-a.s. and
			\begin{equation*}\label{nu}
	       \nu=\mathbb{P} \circ X_T^{-1}.
			\end{equation*}
			 Also note that
			$$
			\int_{G} m_t^{\Xb}(d x) d t=\tilde{m}(G)=\mathbb{E}^{\mathbb{P}}\left[\int_0^T\indicator{G}\left(t, X_t\right) d t\right]=\mathbb{E}^{\mathbb{P}}\left[\int_0^T\indicator{G}\left(t, X_{t-}\right) d t\right], \quad \forall G \in \mathcal{B}([0, T] \times \overline{\mathbb{X}}),
			$$
			where we have used the fact that $X_t(\omega) = X_{t-}(\omega)$ for a.e. $t \in [0,T]$ for any fixed $\omega \in \Omega$ due to that $X$ is a c\`adl\`ag process.
			Then, for $B \in \mathcal{B}([0, T]), C \in \mathcal{B}(\overline{\mathbb{X}}), D \in \mathcal{B}(\Ab)$,
			\begin{align*}
					\int_B \int_{C \times D} m_t(d x, d a) d t=\int_{B \times C} v_{t, x}(D) m^{\Xb}_t(d x) d t&=\int_B \mathbb{E}^{\mathbb{P}}\left[\indicator{C}\left(X_t\right) v_{t, X_t}(D)\right] d t\\&=\int_B \mathbb{E}^{\mathbb{P}}\left[\indicator{C}\left(X_{t-}\right) v_{t, X_{t-}}(D)\right] d t,
			\end{align*}
			which implies
			\begin{equation*}\label{mt}
				m_t(B \times C)=\mathbb{E}^{\mathbb{P}}\left[\indicator{B}\left(X_t\right) v_{t, X_t}(C)\right]=\mathbb{E}^{\mathbb{P}}\left[\indicator{B}\left(X_{t-}\right) v_{t, X_{t-}}(C)\right], \  B \in \mathcal{B}(\overline{\mathbb{X}}),\  C \in \mathcal{B}(\Ab),\ t-a.e..
			\end{equation*}
			By the definition of $\tilde{m}$, we have
			$$
			0=\tilde{m}\left(\mathbb{R}_{+} \times \overline{\mathbb{X}}^c\right)=\mathbb{E}^{\mathbb{P}}\left[\int_0^T \indicator{\overline{\mathbb{X}}^c}\left(X_t\right) d t\right],
			$$
			which implies 
			\begin{equation}\label{C2}
          (\mathbb{P} \otimes \lambda)\left(\left\{(\omega, t) \in \Omega \times[0, T]: X_t(\omega) \in \overline{\mathbb{X}}^c\right\}\right)=0 .
			\end{equation}
           It follows from \eqref{repre1} and \eqref{representation_skorokhod} that 
			\begin{align*}
				\lambda(G)=\tilde{\lambda}(G)&=\mathbb{E}^{\mathbb{P}}\left[\int_{\Rb_{+}\times\Rb_{+}\times\Rb^d\times [0,T]} \indicator{G}\left(s, x\right)  \tilde{\Gamma}(dr,ds,dx,dv)\right] \\ &=\mathbb{E}^{\mathbb{P}}\left[\int_{0}^T \indicator{G}\left(\tilde{S}_s,X_s\right)  d\tilde{\lambda}(s)\right],
                \quad \forall G \in \mathcal{B}([0, T] \times \partial\Xb).
			\end{align*}
			Using the fact that $\tilde{S}_s \indicator{s \leq T}=s \indicator{s \leq T}$, we deduce  that 
            \begin{equation*}
                \lambda(G)=\mathbb{E}^{\mathbb{P}}\left[\int_{0}^T \indicator{G}\left(s,X_s\right)  d\tilde{\lambda}(s)\right],\quad \forall G \in \mathcal{B}([0, T] \times \partial\Xb),
            \end{equation*}
            and $\tilde{\lambda}$ increases only when $X$ in $\partial\Xb$.
            Then,  thanks to \eqref{proof:martingale1}, we obtain that the process 
				\begin{align*}
				\gamma\left(\cdot \wedge T\right) \varphi\left(X_{\cdot \wedge T}\right)-\int_0^{\cdot \wedge T} \hat{\mathcal{L}}(\gamma \varphi)\left(s, X_s\right) d s-\int_0^{\cdot \wedge T}\hat{\Ac}(\gamma \varphi)(s,X_s)d\tilde{\lambda}(s)
			\end{align*}
			is an $\mathbb{F}$-martingale for all $\gamma \in C_b^1\left(\mathbb{R}_{+}\right), \varphi \in C_b^2(\mathbb{R}^d)$. Moreover, setting \(\gamma \equiv 1\), 
			we draw the conclusion that, for every \(\varphi \in C_b^2(\mathbb{R}^d)\), 
			$$
			\varphi\left(X_{\cdot \wedge T}\right)-\int_0^{\cdot  \wedge T} \hat{\mathcal{L}}(\varphi)\left(s, X_s\right) d s-\int_0^{\cdot  \wedge T}\hat{\Ac}(\varphi)(X_s)d\tilde{\lambda}(s)
			$$
			is an $\mathbb{F}$-martingale. Equivalently, for all \(\varphi \in C_b^2(\mathbb{R}^d)\) and $t\in[0,T]$,
		{\small\begin{align*}
			\varphi\left(X_{t}\right)-\varphi\left(X_0\right)&-\int_0^{t} \int_{\Ab}\left(b\left(s, X_s,\mu_{s}, a\right)\cdot \nabla_x\varphi\left(X_s\right)+\frac{1}{2}\mathrm{tr}\left(\sigma\sigma^{\top}(s, X_s,\mu_s, a) \nabla^2_{xx}\varphi\left(X_s\right)\right)\right) v_{s, X_s}(d a) d s\nonumber\\&-\int_{0}^{t}\int_{\Ab}\pi(s,a)\left[\varphi(\proj(X_{s}+\beta(s,X_{s},\mu_s,a)))-\varphi(X_{s})-\beta\left(s, X_{s},\mu_{s}, a\right)\cdot \nabla_x\varphi\left(X_{s}\right)\right]v_{s, X_s}(d a) d s\nonumber\\&-\int_{0}^{t}m(X_s)\cdot\nabla_x\varphi(X_s)dR^c_s
		\end{align*}}
		is an \(\mathbb{F}\)-martingale, where
		\[
		R^c_t := \int_0^t d\tilde{\lambda}(s).
		\]
		Clearly, $R^c$ is nondecreasing, continuous, and satisfies $R^c_0 = 0$, with the property
		\begin{equation*}\label{reflection1}
		    R^c_t = \int_0^t \indicator{\partial \mathbb{X}}(X_s) \, dR^c_s.
		\end{equation*}
		Finally, as $X_t(\omega) = X_{t-}(\omega)$ for a.e. $t \in [0,T]$ for any fixed $\omega \in \Omega$,  
		we can rewrite the martingale property as: for all $\varphi \in C_b^2(\mathbb{R}^d)$ , the process 
	\begin{align}\label{martingale_jump_reflected}
			\varphi(X_t) - \varphi(X_0)
		- \int_0^t \mathcal{L}\varphi(s, X_{s-}, \mu_s, a) \, v_{s, X_{s-}}(da) \, ds
		- \int_0^t m(X_{s}) \cdot \nabla_x \varphi(X_{s}) \, dR^c_s, \ \ t \in [0,T]
		\end{align}
		is an $\mathbb{F}$-martingale.

		\textit{Step-4: SDE representation of the controlled martingale problem}. 
			As $\Pb$ solves the martingale problem \eqref{martingale_jump_reflected}, it follows from arguments analogous to those in \cite[Lemma~2.1]{benazzoli_mean_2020}
          that  on an extension of the filtered probability space \(\bigl(\Omega,\mathcal{F},\mathbb{F},\mathbb{P}\bigr)\), denoted by \(\bigl(\Omega',\mathcal{F}',\mathbb{F}',\mathbb{P}'\bigr)\), there exist $d$-dimensional  orthogonal $\Fb'$-adapted continuous martingale measures \(M=(M^1,\cdots,M^d)\)  with intensity \(v_{t,X_{t-}}(da)\,dt\),   a random counting measure \(\mathcal{N}\) on \([0,T]\times\mathbb{A}\) with compensator \(\pi(t,a)v_{t,X_{t-}}(da)\,dt\) such that
       \begin{align}\label{eq:sde_intermediate}
       		 X_t&= X_0 + \int_0^t\int_{\Ab} b\left(s, X_s,\mu_s, a\right) v_{s, X_s}(d a) d s ,+\int_{0}^{t}m(X_s)dR^c_s\\&\quad\quad\,\,\,\,+\int_0^t\int_{\Ab}\pi(s,a)[\proj(X_{s-}+\beta(s,X_{s-},\mu_s,a))-(X_{s-}+\beta(s,X_{s-},\mu_s,a))] v_{s,X_{s-}}(da)ds\nonumber\\&\quad\quad\,\,\,\,+\int_0^t\int_{\Ab} \sigma\left(s, X_s,\mu_s, a\right) M(d s, d a)+\int_0^t\int_{\Ab} \bigg(\proj(X_{s-}+\beta(s,X_{s-},\mu_s,a))-X_{s-}\bigg)\tilde{\mathcal{N}}(ds,da).\nonumber
       \end{align}
      Here, $\tilde{\mathcal{N}}$ denotes the compensated random measure associated with $\mathcal{N}$
      and the drift term on the second line arises from the discrepancy between the compensator of the true jump, $\proj(X_{s-}+\beta(\cdot)) - X_{s-}$, and the jump term $\beta(\cdot)$ specified in the operator $\Lc$.
      
    Rearranging the jump and compensated terms, we can rewrite \eqref{eq:sde_intermediate} as:
      \begin{align*}
        X_t &=X_0 + \int_0^t\int_{\Ab} b\left(s, X_s,\mu_s, a\right) v_{s, X_s}(d a) d s+\int_0^t\int_{\Ab} \sigma\left(s, X_s,\mu_s, a\right) M(d s, d a)\\&\quad\quad\,\,\,\,+\int_0^t\int_{\Ab}\beta(s,X_{s-},\mu_s,a)\tilde{\mathcal{N}}(ds,da)+\int_{0}^{t}m(X_s)dR^c_s\\&\quad\quad\,\,\,\,+\int_{0}^{t}\int_{\Ab} \left[\proj(X_{s-}+\beta(s,X_{s-},\mu_s,a))-(X_{s-}+\beta(s,X_{s-},\mu_s,a))\right]  \mathcal{N}(ds,da),\,\ t\in [0,T].
    \end{align*}
      Let us define the jump component of the reflection process by
      \begin{align*}
          R^d_{t}=\int_{0}^{t}\int_{\Ab} \bigg|\proj(X_{s-}+\beta(s,X_{s-},\mu_s,a))-(X_{s-}+\beta(s,X_{s-},\mu_s,a))\bigg| \mathcal{N}(ds,da).
      \end{align*}
    Then, letting $R_t := R^c_t + R^d_t$, and applying the identity for the projection operator
 \begin{align*}
\proj(X_{s-}+\beta(s,X_{s-},\mu_s,a))&=X_{s-}+\beta(s,X_{s-},\mu_s,a)\\&+|\proj(X_{s-}+\beta(s,X_{s-},\mu_s,a))-(X_{s-}+\beta(s,X_{s-},\mu_s,a))| m(X_s),
 \end{align*}
   we get the reflected SDE for the process $X$ that  \begin{align}\label{representation}
       X_t &=X_0 + \int_0^t\int_{\Ab} b\left(s, X_s,\mu_s, a\right) v_{s, X_s}(d a) d s+\int_0^t\int_{\Ab} \sigma\left(s, X_s,\mu_s, a\right) M(d s, d a)\nonumber\\&\quad\quad\,\,\,\,+\int_0^t\int_{\Ab}\beta(s,X_{s-},\mu_s,a)\tilde{\mathcal{N}}(ds,da)+\int_{0}^{t}m(X_s)dR_s,\,\ t\in [0,T].
  \end{align}
  Condition \eqref{C2}, together with the SDE representation \eqref{representation}, entails that $X$ takes values in $\overline{\mathbb{X}}$. More precisely, condition~\eqref{C2} ensures that the trajectory remains in $\overline{\mathbb{X}}$ for almost every $t\in[0,T]$, thereby implying that the continuous points lie in $\overline{\mathbb{X}}$. Meanwhile, condition~\eqref{representation} guarantees that post-jump states lie in $\overline{\mathbb{X}}$, thus preventing exits at jump times. Hence, the RCLL paths of $X$ remain in $\overline{\mathbb{X}}$ for all $t\in[0,T]$.
  Moreover, it is clear that \(R\) is non-decreasing, RCLL, and satisfies $R_t = \int_0^t \mathbf{1}_{\{\partial \mathbb{X}\}}(X_s) \, dR_s.$
\end{proof}
		\begin{theorem}[Equivalence between LPMFEs  and  MFEs with relaxed control  ]\label{theorem:equi}
			 Assume the conditions of Proposition~\ref{thm:prob-rep-R0} hold. Then, the LP MFG problem and the MFG problem with relaxed control are equivalent. More specifically,
			 \begin{itemize}
			 	\item[(i)] Given an LPMFE $(\nu^{\star}, m^{\star}, \lambda^{\star})$ with $m^{*,\Xb}\in B_2$, there exists an  MFE  with Markovian relaxed control $U^{\star} \in \mathcal{R}(m^{*,\Xb})$ such that
			 	\begin{equation}\label{equivalent}
             \mathcal{J}^{L}_{P}[m^{*,\Xb},\nu^*]\left(\nu^{\star}, m^{\star},\lambda^{\star}\right)= \mathcal{J}^{R}[m^{*,\Xb},\nu^*]\left(U^{\star}\right).
			 	\end{equation}
			 
			 	\item [(ii)] Conversely, let $U^{\star}$ be an MFE with relaxed control such that the mean field flow $\mu^* \in B_2$, where $\mu_t^* := \Pb \circ (X_t)^{-1}$ for a.e. $t \in [0,T]$. Define
			 	\begin{align*}
			 	& m_t^{\star}(B\times C)=\mathbb{E}^{\mathbb{P}}\left[\mathbf{1}_B\left(X_t\right) \Lambda_t(C) \right], \quad B \in \mathcal{B}(\overline{\Xb}),\, C\in\Bc(\Ab),\, t-a.e, \\
			 	&\nu^{\star}=\mathbb{P} \circ\left(X_{T}\right)^{-1}, \lambda^{\star}(D)=\mathbb{E}^{\Pb}\Bigl[\int_0^T \mathbf{1}_{D}(t,X_t)\,dR^c_t\Bigr],
			 	\quad 
			 	D \in \mathcal{B}([0,T]\times\partial\Xb).
			 	\end{align*}
			 	Then \((\nu^{\star}, m^{\star}, \lambda^{\star})\) is an LPMFE  and \eqref{equivalent} holds.
			 \end{itemize}
		\end{theorem}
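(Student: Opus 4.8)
The plan is to route both implications through occupation measures and the probabilistic representation of Proposition~\ref{thm:prob-rep-R0}. The crux is that the linear-programming cost $\mathcal{J}^{L}_{P}$ and the relaxed cost $\mathcal{J}^{R}$ coincide whenever the triple $(\nu,m,\lambda)$ is exactly the occupation measure of a relaxed control $U\in\mathcal{R}(\mu)$; once this is established, each direction of the equivalence becomes a transfer of optimality between the LP program over $\mathcal{D}_{P}(\mu^*)$ and the relaxed control problem over $\mathcal{R}(\mu^*)$, obtained by comparing the two infima.

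The first step I would carry out is the easy inclusion: every $U=(\Omega,\mathcal{F},\mathbb{F},\mathbb{P},M,\mathcal{N},\Lambda,X,R)\in\mathcal{R}(\mu)$ induces a triple $(\nu_U,m_U,\lambda_U)\in\mathcal{D}_{P}(\mu)$ with $\mathcal{J}^{R}[\mu](U)=\mathcal{J}^{L}_{P}[\mu](\nu_U,m_U,\lambda_U)$, where $\nu_U:=\mathbb{P}\circ X_T^{-1}$, $m_{U,t}(B\times C):=\mathbb{E}^{\mathbb{P}}[\mathbf{1}_B(X_t)\Lambda_t(C)]$ and $\lambda_U(D):=\mathbb{E}^{\mathbb{P}}[\int_0^T\mathbf{1}_D(t,X_t)\,dR_t]$. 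This is the computation of Remark~\ref{remark:s_to_r} carried out for the relaxed dynamics of Definition~\ref{weak_relax}: applying Itô's formula to $u(t,X_t)$ for $u\in C_b^{1,2}([0,T]\times\overline{\mathbb{X}})$ and taking expectations annihilates the martingale-measure term $\int_0^\cdot\int_{\mathbb{A}}(\sigma\,\partial_x u)(s,X_s,\mu_s,a)\,M(ds,da)$ and the compensated random-measure term (both true martingales, by boundedness of $\sigma\,\partial_x u$ and of $u$ together with finiteness of the intensities), leaving precisely the linear constraint~\eqref{lpconstraint} with the jump contribution matching the bracketed term in $\mathcal{L}u$; the cost identity is then immediate from the definitions and the fact $X_t=X_{t-}$ for a.e.\ $t$.

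Granting this, direction (ii) is short. If $U^{\star}\in\mathcal{R}(\mu^*)$ is an MFE with relaxed control and $(\nu^{\star},m^{\star},\lambda^{\star})$ is the triple defined in the statement, then $(\nu^{\star},m^{\star},\lambda^{\star})\in\mathcal{D}_{P}(\mu^*)$ and \eqref{equivalent} holds by the first step, while consistency holds since $m^{\star\mathbb{X}}_t(B)=\mathbb{P}(X_t\in B)=\mu^*_t(B)$ and $\nu^{\star}=\mathbb{P}\circ X_T^{-1}=\mu^*_T$. For LP-optimality, take any $(\nu,m,\lambda)\in\mathcal{D}_{P}(\mu^*)$; Proposition~\ref{thm:prob-rep-R0} produces $\tilde U\in\mathcal{R}(\mu^*)$ with the same occupation measures, so $\mathcal{J}^{R}[\mu^*](\tilde U)=\mathcal{J}^{L}_{P}[\mu^*](\nu,m,\lambda)$, and minimality of $U^{\star}$ over $\mathcal{R}(\mu^*)$ gives $\mathcal{J}^{L}_{P}[\mu^*](\nu^{\star},m^{\star},\lambda^{\star})=\mathcal{J}^{R}[\mu^*](U^{\star})\le\mathcal{J}^{R}[\mu^*](\tilde U)=\mathcal{J}^{L}_{P}[\mu^*](\nu,m,\lambda)$, so $(\mu^*,\nu^{\star},m^{\star},\lambda^{\star})$ is an LPMFE. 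For direction (i), given an LPMFE $(\mu^*,\nu^{\star},m^{\star},\lambda^{\star})$ I would apply Proposition~\ref{thm:prob-rep-R0} to $(\nu^{\star},m^{\star},\lambda^{\star})\in\mathcal{D}_{P}(\mu^*)$ to obtain $U^{\star}\in\mathcal{R}(\mu^*)$ with relaxed control $\Lambda_t=v^{m^{\star}}_{t,X_{t-}}$, a measurable function of $(t,X_{t-})$ (hence Markovian) and with occupation measures $(\nu^{\star},m^{\star},\lambda^{\star})$; then \eqref{equivalent} holds, and $\mathbb{P}\circ X_t^{-1}=m^{\star\mathbb{X}}_t=\mu^*_t$ for all $t$ after passing to the weakly continuous versions of Remark~\ref{remark:continuous_version}, so $U^{\star}$ is a legitimate element of $\mathcal{R}(\mu^*)$ with the required marginal flow. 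Optimality of $U^{\star}$ in $\mathcal{R}(\mu^*)$ follows from the first step applied to an arbitrary competitor $U\in\mathcal{R}(\mu^*)$: $(\nu_U,m_U,\lambda_U)\in\mathcal{D}_{P}(\mu^*)$ with $\mathcal{J}^{R}[\mu^*](U)=\mathcal{J}^{L}_{P}[\mu^*](\nu_U,m_U,\lambda_U)\ge\mathcal{J}^{L}_{P}[\mu^*](\nu^{\star},m^{\star},\lambda^{\star})=\mathcal{J}^{R}[\mu^*](U^{\star})$ by the LPMFE optimality, and taking the infimum yields $\mathcal{J}^{R}[\mu^*](U^{\star})=V^{R}(\mu^*)$.

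I expect the genuinely hard analytic work to be already packaged inside Proposition~\ref{thm:prob-rep-R0} — realising a prescribed admissible occupation measure as a weak solution of the reflected jump-diffusion SDE under a relaxed control — so that what remains here is essentially bookkeeping. The two points requiring the most care are: first, executing the Itô computation of the easy direction for the relaxed dynamics, checking that the $M$- and $\tilde{\mathcal{N}}$-driven terms are martingales and that the jump part of $\mathcal{L}u$ in~\eqref{lpconstraint} is exactly reproduced by the compensator of the jump term in Itô's formula; and second, the measure-theoretic upgrade of Remark~\ref{remark:continuous_version}, passing from the a.e.-in-$t$ identity $m^{\mathbb{X}}_t=\mu_t$ to a statement holding for every $t\in[0,T]$ through the weakly continuous version of the flow, which is precisely what legitimises the membership $U^{\star}\in\mathcal{R}(\mu^*)$ and the equilibrium identity $\mu^*_t=\mathbb{P}\circ X_t^{-1}$ for all $t$.
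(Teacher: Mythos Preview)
Your proposal is correct and follows essentially the same approach as the paper: the paper's proof simply notes that the arguments of Remark~\ref{remark:s_to_r} (It\^o's formula applied to the relaxed dynamics) give the easy direction $U\in\mathcal{R}(\mu)\Rightarrow(\nu_U,m_U,\lambda_U)\in\mathcal{D}_P(\mu)$ with matching costs, and then invokes Proposition~\ref{thm:prob-rep-R0} for the converse, leaving the optimality transfer implicit. You have spelled out in more detail the comparison of infima in both directions and the Markovian nature of the control $\Lambda_t=v^{m^\star}_{t,X_{t-}}$, but the route is the same.
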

\begin{proof}
By similar arguments as those in Remarks \ref{remark:jump} and \ref{remark:s_to_r}, for any given \( U \in \mathcal{R}(\mu) \) with $\mu\in B_2$, there exists a triple \( (\nu, m, \lambda) \in \mathcal{D}_{P}(\mu) \) such that 
	\[
  \mathcal{J}^{R}[\mu,\rho]\left(U\right)= \mathcal{J}^{L}_{P}[\mu,\rho]\left(\nu, m,\lambda\right).
	\]
Conversely, Proposition~\ref{thm:prob-rep-R0} ensures that any $(\nu, m, \lambda) \in \Dc_{P}(\mu)$ with $\mu \in B_2$ also induces an element of $\mathcal{R}(\mu)$. Indeed, it only remains to verify the integrability condition \eqref{integ2}. This condition holds, as the inclusion $\Dc_{P}(\mu) \subset \Pc_2(\overline{\Xb}) \times V_2 \times \Mc_{+}([0,T]\times\partial\Xb)$ implies that
\begin{align*}
\Eb^{\Pb}&\Bigg[\int_0^T\!\!\int_{\Ab}(|X_t|^2+|a|^2)\,v_{t,X_t}(da)\,dt
+ |X_T|^2 + R^c_T\Bigg]
\\&= \int_{[0,T]\times\overline{\Xb}\times\Ab} (|x|^2+|a|^2)\,m_t(dx,da)\,dt+ \int_{\overline{\Xb}} |x|^2\,\nu(dx)
+ \lambda([0,T]\times\partial\Xb)
< \infty.
\end{align*}
The equivalence of the two formulations thus follows.
\end{proof}

\section{Existence Result}\label{sect:4}
The next theorem is the main result of this paper, concerning the existence of an LPMFE. 
\begin{theorem}\label{thm:existence}
    Under Assumption \ref{c1}, there exists an LPMFE. Consequently, there also exists an MFE with a Markovian relaxed control.
\end{theorem}

The proof of the above main theorem can be outlined as two main steps: In the first step, we consider the model with bounded coefficients and compact action space to facilitate the proof of the existence result. In the second step under more general model conditions as stated in Assumption \ref{c1}, we utilize the existence result in the first step and an approximation argument to conclude the existence of an LPMFE.

	\subsection{Existence of LPMFEs in the bounded case}
In this subsection, we first establish the existence of LPMFEs under the additional assumptions of boundedness of the coefficients and compactness of the action space $\Ab$. 
\begin{assumption}\label{assumption:2}
The functions $b$, $\sigma$ and $\beta$ are bounded, and the control space $\Ab$ is compact.
\end{assumption}
\begin{remark}\label{remark:nonempty}
Under Assumptions~\ref{c1} and~\ref{assumption:2}, the set $\Dc_{P}(\mu)$ is nonempty for any measurable flow $\mu\in M([0,T],\Pc(\overline{\Xb}))$. 
Indeed, by Remark~\ref{remark:s_to_r}, it suffices to verify that, under a constant control $a_0 \in \Ab$, there exists a strong solution to \eqref{sde_strong}–\eqref{Skorokhod1} satisfying the integrability condition~\eqref{integ1}. 
The existence of such a solution follows from the square integrability of $m^*_0$ together with the Lipschitz continuity and boundedness of the coefficients.
The integrability condition \eqref{integ1} then follows from standard estimates.
\end{remark}
\begin{definition}\label{def:D0}
	We define $\mathcal{D}_0$ as the set of all triples $(\nu, m,\lambda) \in {\Pc_2(\overline{\mathbb{X}})\times	V_2\times \Mc_{+}([0,T]\times\partial\Xb)}$ for which there exists a constant $C>0$, depending only on $m^*_0$, $q$, $\|b\|_{\infty}$,  $\|\sigma\|_{\infty}$, $\|\beta\|_{\infty}$, and $\|\pi\|_{\infty}$, such that the following conditions hold: 
    \begin{itemize}
        \item [(i)] The triple $(\nu, m,\lambda)$ satisfies the following uniform integrability and boundedness conditions:
 \[
        \int_{\overline{\mathbb{X}}}|x|^q \, \nu(\mathrm{d}x) \le C, \quad  \int_{\overline{\mathbb{X}}\times\mathbb{A}} |x|^q m_t(\mathrm{d}x,\mathrm{d}a) \le C,\,t-a.e.,\quad \text{and} \quad \lambda([0,T]\times\partial\mathbb{X}) \le C,
    \]
     where $q$ is the constant specified in  Assumption~\ref{c1}.
        \item [(ii)] For all $h\in[0,T]$, we have  \[
        \int_0^{T-h} d_{\mathrm{BL}}(m^{\mathbb{X}}_{t+h}, m^{\mathbb{X}}_{t}) \, \mathrm{d}t \le C\sqrt{h},
    \]
    where $m^{\mathbb{X}}_t$ denotes the  marginal distribution of  $m_t$ on $\overline{\Xb}$ and $d_{\mathrm{BL}}$ is the bounded Lipschitz metric defined for $\mu^1, \mu^2 \in \mathcal{M}_{+}(\overline{\mathbb{X}})$ by
    \[
       d_{\mathrm{BL}}(\mu^1,\mu^2) := \sup\Bigg\{ \left| \int_{\overline{\mathbb{X}}} \varphi(x) \, (\mu^1-\mu^2)(\mathrm{d}x) \right| : \varphi \in \mathrm{BL}(\overline{\mathbb{X}}),\, \|\varphi\|_{\mathrm{BL}} \le 1 \Bigg\}.
    \]
    \end{itemize}
    \end{definition}
	\begin{lemma} \label{lemma:D0}
    {Suppose that Assumptions~\ref{c1} and  \ref{assumption:2} hold}.
		\begin{itemize}
			 \item[(i)] The set $\mathcal{D}_0$ is nonempty, convex and compact under the topology $\tau_2 \otimes \overline{\tau}_2 \otimes \tau_0$. Moreover, it contains the set $\Dc_{P}(\mu)$ for any measurable flow $\mu\in M([0,T],\Pc(\overline{\Xb}))$.
			
			\item [(ii)] The set $\Dc_{P}(\mu)$ is  convex and compact  under the topology $\tau_2 \otimes \overline{\tau}_2 \otimes \tau_0$.
		\end{itemize}
	\end{lemma}
	\begin{proof}
    (i)  It is straightforward to verify that $\mathcal{D}_0$ is convex. We now show that $\mathcal{D}_0$ contains the set $\mathcal{D}_{P}(\mu)$ for any measurable flow $\mu$. Because $\mathcal{D}_{P}(\mu)$ is nonempty by Remark~\ref{remark:nonempty}, this implies the nonemptyness of $\mathcal{D}_0$.
    
    For any  measurable flow $\mu$,  let us consider a  triple $(\nu, m,\lambda)\in \mathcal{D}_{P}(\mu)$.
    	We claim that { \(\lambda([0,T] \times \partial\Xb) \leq C\) for some constant \(C = C(\|b\|_{\infty}, \|\sigma\|_{\infty},\|\beta\|_{\infty},\|\pi\|_{\infty}) > 0\).} Indeed,  we choose the test function \(u(t,x) = \phi(x)\), where \(\phi \in C_b^2(\overline{\mathbb{X}})\) satisfies  $\nabla\phi(x) = m(x)$ on \(\partial\mathbb{X}\). The existence of such a function is guaranteed by Lemma~\ref{lemma:boundary_construction} when $\Xb$ is bounded. In the case that $\mathbb{X}=(0,\infty)$, the existence of such a function is trivial.
        Substituting this \(u\) into the linear programming constraint \eqref{lpconstraint} yields
  \begin{align*}
      \lambda([0,T]\times\partial\Xb) 
\;\le\;
\int_{\overline{\mathbb{X}}} |\phi(x)|\,\nu(dx)
\;+\;
\int_{\overline{\mathbb{X}}} |\phi(x)|\,m_0^*(dx)
\;+\;
\int_{[0,T]\times\overline{\mathbb{X}}\times\mathbb{A}}
\bigl|\mathcal{L}\phi(t,x,\mu_t,a)\bigr|\,
m_t(dx,da)dt.
  \end{align*}
Note that \(\nu\) and \(m_0^*\) are probability measures, \(\int_0^T m_t(dx,da)dt=T\), and both \(\phi\) and \(\mathcal{L}\phi\) are bounded, the right-hand side is bounded by a constant \(C=C(\|b\|_{\infty}, \|\sigma\|_{\infty},\|\beta\|_{\infty},\|\pi\|_{\infty}) > 0\).

 For $(\nu, m)$, we consider their probabilistic representation provided by Proposition \ref{thm:prob-rep-R0}\footnote{Note that when the coefficients are bounded, the condition $\mu\in B_2$ in Proposition~\ref{thm:prob-rep-R0} is no longer required.}. Applying Lemma~\ref{lemma:A6} (i) and using the boundedness assumption on the coefficients, we obtain the existence of a constant $C>0$, depending only on $m_0^*$, $q$,  $\|b\|_{\infty}$, $\|\sigma\|_{\infty}$, $\|\beta\|_{\infty}$, and $\|\pi\|_{\infty}$, that
 \begin{align*}
      &\int_{\overline{\mathbb{X}}}|x|^q \, \nu(\mathrm{d}x)=\Eb^{\Pb}[|X_T|^q]\leq \Eb^{\Pb}[\sup_{0\leq t\leq T}|X_t|^q]\leq C,\\  
      &\int_{\overline{\mathbb{X}}\times\mathbb{A}} |x|^q m_t(\mathrm{d}x,\mathrm{d}a)=\Eb^{\Pb}[|X_t|^q ]\leq  \Eb^{\Pb}[\sup_{0\leq t\leq T}|X_t|^q]\leq C,\, t-a.e..
 \end{align*}
Let us prove the condition (ii). For $\varphi \in \mathrm{BL}(\overline{\mathbb{X}})$ such that $\|\varphi\|_{\mathrm{BL}} \le 1$, $h\in[0,T]$ and for almost every $t\in [0,T-h]$, we have
\begin{align}\label{m_contin}
    \int_{\overline{\Xb}}\varphi(x)(m^{\mathbb{X}}_{t+h}- m^{\mathbb{X}}_{t})(dx)&=\Eb^{\Pb}\left[\varphi(X_{t+h})-\varphi(X_{t})\right]\leq\Eb^{\Pb}\left[|\varphi(X_{t+h})-\varphi(X_{t})|\right]\nonumber\\&\leq \Eb^{\Pb}\left[|X_{t+h}-X_{t}|\right]\leq \Eb^{\Pb}\left[|X_{t+h}-X_{t}|^2\right]^{\frac{1}{2}}.
\end{align}
Applying Lemma \ref{lemma:A6} (ii), we get $ \Eb^{\Pb}\left[|X_{t+h}-X_{t}|^2\right]\leq Ch$ for some constant $C>0$, depending only on $\|b\|_{\infty}$, $\|\sigma\|_{\infty}$, $\|\beta\|_{\infty}$, and $\|\pi\|_{\infty}$. We then deduce that the existence of a constant $C>0$ such that $\int_0^{T-h} d_{\mathrm{BL}}(m^{\mathbb{X}}_{t+h}, m^{\mathbb{X}}_{t}) \, \mathrm{d}t \le C\sqrt{h}$.

It remains to prove the compactness of $\mathcal{D}_0$. Because the  space $\mathcal{P}_2(\overline{\mathbb{X}}) \times V_2 \times \mathcal{M}_{+}([0,T]\times\partial\mathbb{X})$ is metrizable, it suffices to show that $\mathcal{D}_0$ is sequentially compact. Let $\{(\nu^n, m^n, \lambda^n)\}_{n\ge 1}$ be an arbitrary sequence in $\mathcal{D}_0$.  By definition of $\mathcal{D}_0$, there exists a constant $C>0$ independent of $n$ such that
    \begin{align}\label{uniformlyintegrability}
        \sup_{n\ge1}\int_{\overline{\Xb}}|x|^q\nu^n(dx)<C,
    \end{align}
which implies that \(\{\nu^n\}_{n\geq 1}\) is tight (as the map $x\rightarrow|x|^q$ has compact level sets).  By Prokhorov’s theorem, there exists a subsequence (still denoted by \(\{\nu^n\}\)) and a limit point \( \nu \in  \mathcal{P}\bigl(\overline{\Xb}\bigr)\) such that 
		$ \nu^n \rightarrow  \nu$ under the weak convergence topology $\tau_0$.  Furthermore, the uniform integrability implied by \eqref{uniformlyintegrability} ensures that the convergence also holds in $\tau_2$ (cf. \cite[Theorem 7.12]{Villani2003Topics}).   To show that the limit point $\nu$ also satisfies the moment bound, we note that for any $k \ge 1$,
        \begin{equation}\label{argument}
             \int_{\overline{\mathbb{X}}} (|x|^q \wedge k) \, \nu^n(\mathrm{d}x) \le \int_{\overline{\mathbb{X}}} |x|^q \, \nu^n(\mathrm{d}x) \le C.
        \end{equation}
Taking the limit as $n\to\infty$, the weak convergence implies $\int_{\overline{\mathbb{X}}} (|x|^q \wedge k) \, \nu(\mathrm{d}x) \le C$. The desired bound, $\int_{\overline{\mathbb{X}}} |x|^q \, \nu(\mathrm{d}x) \le C$, then follows from the monotone convergence theorem as $k\to\infty$.

Let us show that we can extract a further subsequence of  $\{m^n\}$ that converges to some $m\in V_2$ in  the stable topology $\overline{\tau}_2$. 
Note that 
\begin{align*}
    \int_{[0,T]\times\overline{\Xb}\times\Ab}m^n_t(dx,da)dt=T,\quad\sup_{n\ge1} \int_{[0,T]\times\overline{\Xb}\times\Ab}|x|^qm^n_t(dx,da)dt\leq C T.
\end{align*}
By \cite[Corollary A.4]{dumitrescu2023linear}, there exists a subsequence, which we do not relabel, and a limit measure $m \in \mathcal{M}^2_{+}([0,T]\times\overline{\mathbb{X}}\times\mathbb{A})$ satisfying 
$\int_{[0,T]\times\overline{\Xb}\times\Ab}m(dt,dx,da)=T$
such that $m^n \rightarrow m$ in the $\tau_2$ topology.
We now show that $m\in V_2$. By the disintegration theorem, this is equivalent to showing that the marginal of $m$ on  $[0,T]$ is the Lebesgue measure.
To this end, we use the relative compactness criterion given in  \cite[Theorem 2 and Extension 1]{rossi2003tightness} for the convergence in measure topology. 
Consider the mapping $H:\Pc(\overline{\Xb})\rightarrow[0,\infty]$ defined by 
\begin{align*}
    H(m)=\int_{\overline{\Xb}}|x|^qm(dx),\quad m\in\Pc(\overline{\Xb}).
\end{align*}
Following \cite[Lemma 2.9]{dumitrescu2023linear}, one can show that $H$ is a coercive integrand in the sense of \cite[Definition 2.8]{dumitrescu2023linear} or (1.7 a,b,c) in \cite{rossi2003tightness}. The integrability condition from Definition \ref{def:D0} (i) implies
\begin{align*}
    \sup_{n\geq1}\int_0^T H(m^{n,\Xb}_t)dt=\sup_{n\geq1}\int_0^T\int_{\overline{\Xb}}|x|^q m_t^{n,\Xb}(dx)dt=\sup_{n\geq1}\int_0^T\int_{\overline{\Xb}}|x|^q m_t^{n}(dx,da)dt\leq CT,
\end{align*}
where $m_t^{n,\Xb}$ denotes the marginal of $m_t^n$ on $\overline{\Xb}$. Now, by Definition \ref{def:D0} (ii), 
\begin{align*}
    \lim\limits_{h\rightarrow0}\sup_{n\geq1}\int_0^{T-h}d_{\mathrm{BL}}(m^{n,\mathbb{X}}_{t+h}, m^{n,\mathbb{X}}_{t}) \, \mathrm{d}t=0.
\end{align*}
According to \cite[Theorem 2 and Extension 1]{rossi2003tightness}, up to subsequence, $\{m^{n,\Xb}\}$ converges to $\tilde{m}\in M([0,T],\Pc(\overline{\Xb}))$ in measure.  Up
to another subsequence, $\{m_t^{n,\Xb}\}$ converges weakly to $\tilde{m}_t$ $t$-a.e. on $[0,T]$. For any bounded continuous function $\varphi\in C_b([0,T]\times\overline{\Xb})$, the dominated convergence theorem ensures that 
\begin{align*}
\lim_{n\rightarrow\infty}\int_{0}^T\int_{\overline{\Xb}}\varphi(t,x)m^{n,\overline{\Xb}}_t(dx)dt = \int_{0}^T\int_{\overline{\Xb}}\varphi(t,x)\tilde{m}_t(dx)dt.
\end{align*}
As $\{m^n\}$ converges to $m$ in the $\tau_2$ topology, we arrive at 
\begin{align*}
    \int_{0}^T\int_{\overline{\Xb}}\varphi(t,x)m(dt,dx,da) = \int_{0}^T\int_{\overline{\Xb}}\varphi(t,x)\tilde{m}_t(dx)dt.
\end{align*}
By choosing a test function $\varphi(t)$ independent of $x$, we conclude that the marginal of $m$ on $[0,T]$ is the Lebesgue measure.  It then follows from \cite[Lemma A.3]{lacker_mean_2015} that the convergence $m^n \to m$ also holds in the stable topology $\overline{\tau}_2$.

It remains to show that $m$ satisfies the conditions of Definition \ref{def:D0}. From the preceding argument, we have a subsequence (which we do not relabel) such that  $\{m_t^{n,\Xb}\}$ converges weakly to $m^{\Xb}_t$ $t$-a.e. As 
\begin{align*}
    \sup_{n\geq1}\int_{\Xb\times\Ab}|x|^qm^n_t(dx,da)= \sup_{n\geq1}\int_{\Xb}|x|^qm^{n,\Xb}_t(dx)\leq C,\quad t-a.e,
\end{align*}
it follows by the same argument as in \eqref{argument} that the limit also satisfies $\int_{\overline{\mathbb{X}}\times\mathbb{A}} |x|^q \, m_t(\mathrm{d}x,\mathrm{d}a) \le C$ for a.e. $t$.  
For condition (ii) of Definition~\ref{def:D0}, we note that the integrand converges pointwise for a.e. $t$ due to the weak convergence of the marginals and the continuity of the bounded Lipschitz metric.
Because the integrand is uniformly bounded, we can apply the dominated convergence theorem to obtain that $\int_0^{T-h} d_{\mathrm{BL}}(m^{\mathbb{X}}_{t+h}, m^{\mathbb{X}}_{t}) \, \mathrm{d}t \le C\sqrt{h}.$

Finally, consider the sequence of measures $\{\lambda^n\}_{n \ge 1}$.  Note that  $\partial\mathbb{X}$ is compact, and thus the product space $[0,T]\times\partial\mathbb{X}$ is also compact. By definition of $\mathcal{D}_0$, 
$\lambda^n([0,T]\times\partial\mathbb{X}) \le C$. Therefore, Prokhorov's theorem implies that there exists a subsequence (still denoted by $\{\lambda^n\}$) and a measure $\lambda \in \mathcal{M}_{+}([0,T]\times\partial\mathbb{X})$ such that $\lambda^n \rightarrow \lambda$ in the weak convergence topology $\tau_0$. Moreover, it is clear that  $\lambda([0,T]\times\partial\mathbb{X}) \le C$.

(ii) The convexity of $\Dc_P(\mu)$ is a direct consequence of the linearity of constraint \eqref{lpconstraint}. To verify the compactness, we first note that the set is relatively compact by the same arguments used in the proof of part (i), and we can pass easily to the limit in the constraint to conclude that $\Dc_P(\mu)$ is closed, which completes the proof.
	\end{proof}
\begin{lemma}\label{lemma:4.5}
   Let $(\nu^n,m^n,\lambda^n)_{n\geq1}\subset\Dc_0$   such that $m^n\rightarrow m$ in $\overline{\tau}_2$, then $m^{n,\Xb}\rightarrow m^{\Xb}$ in the topology of  convergence in measure, $\tilde{\tau}_2$.
\end{lemma}
\begin{proof}
    It is sufficient to show that from any subsequence of $\{m^{n,\mathbb{X}}\}$, we can extract a further subsequence that converges to $m^{\Xb}_t$ in $\tau_2$ $t$-a.e. 
    
   Let $\{m^{n}\}$ be an arbitrary subsequence. As established in the proof of Lemma~\ref{lemma:D0}, the sequence of marginals $\{m^{n,\mathbb{X}}\}$ is relatively compact in the topology of convergence in measure $\tilde{\tau}_0$. Therefore, there exists a further subsequence, which we don not relabel, such that $m^{n,\Xb}$ converges to some $\tilde{m}\in M([0,T],\Pc(\overline{\Xb}))$ in measure. 
    Up to another subsequence (still denoted by the same index),  $m^{n,\Xb}_t$ converges weakly to $\tilde{m}_t$ $t$-a.e. on $[0,T]$. Furthermore,  since $\sup_{n\geq1}\int_{\overline{\Xb}}|x|^qm^{n,\Xb}_t(dx)\leq C$ $t$-a.e., the weak convergence is strengthened to convergence in the $\tau_2$ topology for a.e. $t\in[0,T]$. As $m^n\rightarrow m$ in $\overline{\tau}_2$, we must have $\tilde{m}_t = m^{\mathbb{X}}_t$ $t$-a.e., which completes the proof.
\end{proof}

\begin{definition}\label{Defsetmp} 
Fix a triple $(\overline{\nu},\overline{m},\overline{\lambda})\in \Dc_0$. Let $\Gamma[\overline{\nu},\overline{m}]:\mathcal{D}_0\rightarrow\Rb$ be defined by
	\begin{align*}
		\Gamma[\overline{\nu},\overline{m}](\nu,m,\lambda)= \mathcal{J}^{L}_{P}[\overline{m}^{\Xb},\overline{\nu}](\nu,m,\lambda)
	\end{align*}
	where $\overline{m}^{\Xb}:=(\overline{m}^{\Xb}_t)_{0\leq t\leq T}$ denotes the marginal of $\overline{m}$ on $\overline{\Xb}$ and $\mathcal{J}^{L}_{P}$ is given by Definition~\ref{lpdef}.
    Define the set valued mapping $\mathcal{D}^*:\mathcal{D}_0\rightarrow 2^{\mathcal{D}_0}$ by  
	\[\mathcal{D}^*(\overline{\nu},\overline{m},\overline{\lambda}):=\mathcal{D}_{P}(\overline{m}^{\Xb}).
	\] Also,
	denote the set valued mapping $\Phi: \mathcal{D}_0\rightarrow 2^{\mathcal{D}_0}$ by 
	\begin{equation}\label{fixedpointmapping}
\Phi(\overline{\nu},\overline{m},\overline{\lambda}):=\underset{(\nu,m,\lambda)\in \mathcal{D}^*(\overline{\nu},\overline{m},\overline{\lambda})}{\arg\min}\Gamma[\overline{\nu},\overline{m}](\nu,m,\lambda).
	\end{equation}
\end{definition}
\begin{remark}\label{remark:fix_lp}
Note that the set of LPMFEs coincides with the set of fixed points of \(\Phi\).
\end{remark}

\begin{lemma}\label{lemma:D*}
 {Under Assumptions~\ref{c1}  and  \ref{assumption:2}}, the set-valued mapping $\mathcal{D}^*$ is continuous.
\end{lemma}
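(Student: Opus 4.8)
The plan is to show that $\mathcal{D}^*$ is both upper hemicontinuous (u.h.c.) and lower hemicontinuous (l.h.c.); since $\mathcal{D}_0$ is compact metrizable and each value $\mathcal{D}_{P}(\overline{m}^{\Xb})$ is closed (Lemma~\ref{lemma:D0}), this gives continuity in the Hausdorff sense. Since $\mathcal{D}^*$ depends on its argument only through the continuous map $(\overline{\nu},\overline{m},\overline{\lambda})\mapsto\overline{m}^{\Xb}$, it suffices to work with sequences $(\overline{\nu}^n,\overline{m}^n,\overline{\lambda}^n)\to(\overline{\nu},\overline{m},\overline{\lambda})$ in $\mathcal{D}_0$. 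The key preliminary step is the reduction to \emph{a.e.-in-$t$} weak convergence of the mean-field marginals along subsequences: by Lemma~\ref{lemma:convergence}~(i) (valid on $\mathcal{D}_0$ by Lemma~\ref{lemma:D0}~(i)), $t\mapsto\int_{\overline{\Xb}}h(t,x)(\overline{m}^n)^{\Xb}_t(dx)\to t\mapsto\int_{\overline{\Xb}}h(t,x)\overline{m}^{\Xb}_t(dx)$ in $L^1([0,T])$ for every $h\in C_b([0,T]\times\overline{\Xb})$; using that $L^1$-convergence implies a.e.\ convergence along a subsequence, a diagonal argument over a countable subset dense in $C([0,T]\times\overline{\Xb})$, and Prokhorov's theorem, one obtains a subsequence (not relabelled) along which $(\overline{m}^n)^{\Xb}_t\to\overline{m}^{\Xb}_t$ weakly for a.e.\ $t$. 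Because $b,\sigma,\beta$ (hence $\Lc u$ for any fixed $u\in C^{1,2}_b([0,T]\times\overline{\Xb})$) are, for each fixed $t$, continuous on the compact set $\overline{\Xb}\times\Pc(\overline{\Xb})\times\Ab$ (using Assumption~\ref{assumption:2}~(ii), the weak continuity of $\mu\mapsto\int\hat b(t,y)\mu(dy)$, etc., and $x+\beta\in\overline{\Xb}$ from Assumption~\ref{c1}~(ii)), they are uniformly continuous there; hence along this subsequence $\sup_{(x,a)}\bigl|b(t,x,(\overline{m}^n)^{\Xb}_t,a)-b(t,x,\overline{m}^{\Xb}_t,a)\bigr|\to0$ for a.e.\ $t$, and similarly for $\sigma,\beta,\Lc u$, all these quantities being uniformly bounded.

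For u.h.c.\ it suffices to show that $\mathcal{D}^*$ has closed graph. Take $(\nu^n,m^n,\lambda^n)\in\mathcal{D}_{P}((\overline{m}^n)^{\Xb})$ with $(\nu^n,m^n,\lambda^n)\to(\nu,m,\lambda)$, and fix $u\in C^{1,2}_b([0,T]\times\overline{\Xb})$; I pass to the limit in~\eqref{lpconstraint}. The left-hand side and the boundary term $\int\Ac u\,d\lambda^n$ converge by weak convergence of $\nu^n$ and $\lambda^n$ (note $\Ac u$ is bounded and continuous on $[0,T]\times\partial\Xb$ since $m$ is continuous on the $C^3$ boundary); the $\partial_t u$ term converges because $m^n\to m$ in $V$; and for the $\Lc u$ term I write $\int_0^T\!\!\int\Lc u(t,x,(\overline{m}^n)^{\Xb}_t,a)m^n_t(dx,da)dt=A_n+B_n$, where $B_n:=\int_0^T\!\!\int\Lc u(t,x,\overline{m}^{\Xb}_t,a)m^n_t(dx,da)dt$ converges by the stable convergence $m^n_t(dx,da)dt\to m_t(dx,da)dt$ (Lemma~\ref{lemma:convergence}~(ii), since $(x,a)\mapsto\Lc u(t,x,\overline{m}^{\Xb}_t,a)$ is continuous for each $t$), while $|A_n|\le\int_0^T\sup_{(x,a)}\bigl|\Lc u(t,x,(\overline{m}^n)^{\Xb}_t,a)-\Lc u(t,x,\overline{m}^{\Xb}_t,a)\bigr|\,dt\to0$ by dominated convergence (using $m^n_t(\overline{\Xb}\times\Ab)=1$ a.e., Lemma~\ref{lemma:1}~(ii)). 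A sub-subsequence argument removes the dependence on the extraction, so $(\nu,m,\lambda)$ satisfies~\eqref{lpconstraint} with $\mu=\overline{m}^{\Xb}$, i.e.\ $(\nu,m,\lambda)\in\mathcal{D}^*(\overline{\nu},\overline{m},\overline{\lambda})$.

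For l.h.c., fix $(\overline{\nu}^n,\overline{m}^n,\overline{\lambda}^n)\to(\overline{\nu},\overline{m},\overline{\lambda})$ in $\mathcal{D}_0$ and a target $(\nu,m,\lambda)\in\mathcal{D}_{P}(\overline{m}^{\Xb})$; I construct $(\nu^n,m^n,\lambda^n)\in\mathcal{D}_{P}((\overline{m}^n)^{\Xb})$ converging to it. By Proposition~\ref{thm:prob-rep-R0} there is a filtered probability space carrying $(M,\mathcal{N},X,R)$ representing $(\nu,m,\lambda)$ with kernel $v^m_{t,X_{t-}}$; set $\Lambda_t:=v^m_{t,X_{t-}}$ (an $\Fb$-predictable $\Pc(\Ab)$-valued process, regarded as a frozen open-loop control). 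On the same space and driven by the same $(M,\mathcal{N})$, let $(X^n,R^n)$ be the unique strong solution of the reflected SDE with jumps
\begin{align*}
dX^n_t&=\Bigl(\int_{\Ab}b\bigl(t,X^n_t,(\overline{m}^n)^{\Xb}_t,a\bigr)\Lambda_t(da)\Bigr)dt
+\int_{\Ab}\sigma\bigl(t,X^n_t,(\overline{m}^n)^{\Xb}_t,a\bigr)M(dt,da)\\
&\quad+\int_{\Ab}\beta\bigl(t,X^n_{t-},(\overline{m}^n)^{\Xb}_t,a\bigr)\tilde{\mathcal{N}}(dt,da)+m(X^n_t)dR^n_t,\qquad X^n_0=X_0,
\end{align*}
which exists because the averaged coefficients $x\mapsto\int_{\Ab}b(t,x,(\overline{m}^n)^{\Xb}_t,a)\Lambda_t(da)$, etc., are bounded and Lipschitz in $x$ uniformly in $(t,\omega)$ (Assumption~\ref{assumption:2}~(ii), as underlying Remark~\ref{remark:nonempty}), and which stays in $\overline{\Xb}$ by Assumption~\ref{c1}~(ii). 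Then $U^n:=(\Omega,\Fc,\Fb,\Pb,M,\mathcal{N},\Lambda,X^n,R^n)\in\mathcal{R}((\overline{m}^n)^{\Xb})$, and, arguing as in Remark~\ref{remark:s_to_r} (Itô's formula), its occupation measures $(\nu^n,m^n,\lambda^n)$, defined exactly as in Theorem~\ref{theorem:equi}~(ii), belong to $\mathcal{D}_{P}((\overline{m}^n)^{\Xb})$. To prove convergence it suffices, by the sub-subsequence principle, to argue along the subsequence of the first paragraph. Applying Itô's formula to $|X^n_t-X_t|^2$, the reflection contribution $\int_0^t\langle X^n_s-X_s,m(X^n_s)\rangle dR^n_s+\int_0^t\langle X_s-X^n_s,m(X_s)\rangle dR_s$ is $\le 0$ by convexity of $\Xb$ (with $m$ the inward unit normal), and the remaining terms are controlled by the Lipschitz bounds, Burkholder--Davis--Gundy, and the facts $\int_0^T\sup_{(x,a)}\bigl|b(t,x,(\overline{m}^n)^{\Xb}_t,a)-b(t,x,\overline{m}^{\Xb}_t,a)\bigr|^2dt\to0$ and likewise for $\sigma,\beta$; Grönwall's lemma then gives $\Eb[\sup_{t\le T}|X^n_t-X_t|^2]\to0$, whence $\nu^n\to\nu$ weakly and $m^n\to m$ in $V$. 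Finally, $(\nu^n,m^n,\lambda^n)\in\mathcal{D}_0$ (compact), so a further subsequence has $\lambda^n\to\lambda^\infty$; by the closed-graph property just established, $(\nu,m,\lambda^\infty)\in\mathcal{D}_{P}(\overline{m}^{\Xb})$, and subtracting~\eqref{lpconstraint} for $(\nu,m,\lambda)$ and $(\nu,m,\lambda^\infty)$ forces $\int_{[0,T]\times\partial\Xb}\Ac u\,d(\lambda^\infty-\lambda)=0$ for all $u\in C^{1,2}_b([0,T]\times\overline{\Xb})$, hence $\lambda^\infty=\lambda$ by Lemma~\ref{lemma:boundary_equal}. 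As every subsequence admits a further subsequence along which $(\nu^n,m^n,\lambda^n)\to(\nu,m,\lambda)$, the whole sequence converges, proving l.h.c.

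The main obstacle is the l.h.c.\ step: it genuinely relies on the probabilistic characterization of occupation measures (Proposition~\ref{thm:prob-rep-R0}) to transport a given admissible triple for $\overline{m}^{\Xb}$ to admissible triples for the perturbed flows $(\overline{m}^n)^{\Xb}$ by re-solving the reflected jump SDE with a frozen open-loop control. Two further delicate points are handled indirectly: the reflection term in the $L^2$-stability estimate is absorbed using convexity of $\Xb$ rather than a direct bound on $R^n-R$, and the convergence $\lambda^n\to\lambda$ is obtained from compactness of $\mathcal{D}_0$ together with the closed-graph property and the injectivity-type Lemma~\ref{lemma:boundary_equal}, again avoiding a pathwise control of the regulators. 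Throughout, the reduction to a.e.-in-$t$ weak convergence of the mean-field marginals along subsequences (first paragraph) is indispensable, since the flows $(\overline{m}^n)^{\Xb}$ need not converge pointwise in $t$.
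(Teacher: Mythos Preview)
Your proof is correct and follows essentially the same architecture as the paper's: upper hemicontinuity via closed graph, lower hemicontinuity via Proposition~\ref{thm:prob-rep-R0} and a coupled reflected SDE with frozen control $v^m_{t,X_{t-}}$, convexity of $\Xb$ to discard the reflection cross-terms, and compactness of $\mathcal{D}_0$ together with Lemma~\ref{lemma:boundary_equal} to identify the limit of $\lambda^n$. The only real difference is how you pass to the limit in the mean-field-dependent terms: the paper works directly with $L^1([0,T])$-convergence of $t\mapsto\int\hat b(t,y)\,\bar m^{n,\Xb}_t(dy)$ and appeals to the stable-convergence Lemma~\ref{lemma:technical}~(ii), whereas you first extract a subsequence along which $\bar m^{n,\Xb}_t\to\bar m^{\Xb}_t$ weakly for a.e.\ $t$ and then use uniform continuity on compacta plus dominated convergence, closing by a sub-subsequence argument. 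Your route is slightly more elementary and makes the role of Assumption~\ref{assumption:2}~(ii) transparent, while the paper's route avoids the extraction/diagonalization and yields the limit along the full sequence in one step. (Two minor remarks: the set on which you invoke uniform continuity should be $\overline{\Xb}\times K\times\Ab$ with $K$ the bounded range of $\int\hat b(t,y)\mu(dy)$, not $\Pc(\overline{\Xb})$, since $\bar m^{n,\Xb}_t$ need not be a probability measure for generic elements of $\mathcal{D}_0$; and the paper carries out the SDE stability in $L^4$ rather than $L^2$, but your $L^2$ estimate is sufficient for the conclusion.)
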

\begin{proof}
	\textit{Step-1}. {\sl  We first establish the upper hemicontinuity}. By the Closed Graph Theorem (cf. \cite[Theorem 17.11]{aliprantis_infinite_2006}), it suffices to verify that $\mathcal{D}^\star$ has a closed graph. To this end, let $\left(\nu^n, m^n, \lambda^n\right) \in \mathcal{D}^{\star}\left(\bar{\nu}^n, \bar{m}^n, \bar{\lambda}^n\right)$ be a sequence such that $(\nu^n, m^n, \lambda^n) \rightharpoonup (\nu, m, \lambda)$  and $(\bar{\nu}^n, \bar{m}^n, \bar{\lambda}^n) \rightharpoonup (\bar{\nu}, \bar{m}, \bar{\lambda})$ in the topology $\tau_2 \otimes \overline{\tau}_2 \otimes \tau_0$.  Then, for each \( n \geq 1 \) and any \( u \in C_b^{1,2}([0, T] \times \overline{\Xb}) \), it holds that
	\begin{align}\label{upperhemi}
		\int_{\overline{\Xb}} u(T, x) \nu^n(d x) 
		= \int_{\overline{\Xb}} u(0, x) m^*_0(d x) 
		&+ \int_{[0, T] \times \overline{\Xb} \times \mathbb{A}} \bigl(\partial_t u + \Lc u\bigr)(t, x, \bar{m}^{n,\Xb}_t, a) \, m^n_t(d x, d a) \, d t \\
		& + \int_{[0, T] \times \partial\Xb} \mathcal{A} u(t, x) \, \lambda^n(d t, d x). \nonumber
	\end{align}
	We next pass to the limit in each term.  The convergence of the following terms is straightforward
    \begin{align*}
    		&\int_0^T \int_{\overline{\Xb} \times \Ab} \partial_t u(t, x) \, m^n_t(d x, d a) \, d t \xrightarrow{n \to \infty} \int_0^T \int_{\overline{\Xb} \times \Ab} \partial_t u(t, x) \, m_t(d x, d a) \, d t,\\
            	&\int_{\overline{\Xb}} u(T, x) \, \nu^n(d x) \xrightarrow{n \to \infty} \int_{\overline{\Xb}} u(T, x) \, \nu(d x), \\
	&\int_{[0, T] \times \partial\Xb} m(x)\cdot \partial_{x}u(t, x) \, \lambda^n(d t, d x)
	\xrightarrow{n \to \infty}\int_{[0, T] \times \partial\Xb} m(x)\cdot \partial_{x}u(t, x) \, \lambda(d t, d x),
    \end{align*}
    where the third convergence uses the fact that the inward normal vector field $m$ is $C(\partial\mathbb{X})$ due to the $C^1$-regularity of $\partial\mathbb{X}$.
 We claim that
    \begin{align*}
	&\int_0^T \int_{\overline{\Xb} \times \Ab}
	\Lc u
	\left(t, x, \bar{m}^{n,\Xb}_t, a\right) m^n_t(d x, d a) \, d t 
	\xrightarrow{n \to \infty}
	\int_0^T \int_{\overline{\Xb} \times \Ab}
	\Lc u
	\left(t, x, \bar{m}^{\Xb}_t, a\right) m_t(d x, d a) \, d t.
\end{align*}
To prove this claim, it suffices to show that every subsequence admits a further subsequence along which the convergence holds. We illustrate this with the drift term, as the remaining terms in $\mathcal{L}u$ can be handled similarly.  For any subsequence of $\{\int_0^T \int_{\overline{\Xb} \times \Ab} \left( b\cdot \partial_x u\right)\left(t, x, \bar{m}^{n,\Xb}_t, a\right) m^n_t(d x, d a) dt\}_{n\geq1}$, by Lemma \ref{lemma:4.5}, we can extract a further subsequence (which we do not relabel)  such that $\bar{m}^{n,\Xb}_t$ converges to $\bar{m}^{\Xb}_t$ in $\tau_2$ $t$-a.e.   We now apply Lemma \ref{lemma:technical} (ii), setting the components as $\Theta=[0,T]$, $\Xc=\overline{\Xb}\times\Ab$, $\mathcal{Y}=\Pc_2(\overline{\Xb})$, $\eta(dt)=dt$, $\psi^n(t)=\bar{m}^{n,\Xb}_t$, $\psi(t)=\bar{m}^{\Xb}_t$, $\upsilon^n=m^n$, $\upsilon=m$ and  $\varphi(t,x,a,y)= \left( b\cdot \partial_x u\right)\left(t, x, y, a\right)$.
By the Definition of $\Dc_0$, we can conclude that there exists a constant $C>0$ such that
\begin{align*}
    \sup_{n\geq1}\int_{\overline{\Xb}\times\Ab}(1+|x|^2+|a|^2)m^n_t(dx,da)\leq C,\quad t-a.e..
\end{align*}
Furthermore, for a.e. $t$, $\bar{m}^{n,\mathbb{X}}_t$ belongs to the set $\mathcal{K}=\{\iota\in\mathcal{P}_2(\overline{\mathbb{X}}):\int|x|^q\iota(\mathrm{d}x)\le C\}$, which is a compact subset of $\mathcal{P}_2(\overline{\mathbb{X}})$ in the $\tau_2$ topology, as shown in Lemma~\ref{lemma:D0}. With all conditions satisfied, Lemma~\ref{lemma:technical} (ii) yields the desired convergence for the subsequence, which justifies our claim.

Passing to the limit in all terms of \eqref{upperhemi}, we have
\begin{align*}
	\int_{\overline{\Xb}} u(T, x) \, \nu(d x)
	= \int_{\overline{\Xb}} u(0, x) \, m^*_0(d x)
	&+ \int_{[0, T] \times \overline{\Xb} \times \Ab}
	\left(\partial_t u + \Lc u\right)(t, x, \bar{m}^{\Xb}_t, a) \, m_t(d x, d a) \, d t \\
	& + \int_{[0, T] \times \partial\Xb} \mathcal{A} u(t, x) \, \lambda(d t, d x),
\end{align*}
 which implies $\left(\nu, m, \lambda\right) \in \mathcal{D}^{\star}\left(\bar{\nu}, \bar{m}, \bar{\lambda}\right)$.

	 {   \textit{Step-2}. {\sl We now show the lower hemicontinuity}.  Consider a sequence $\left(\bar{\nu}^n, \bar{m}^n,\bar{\lambda}^n\right)_{n \geq 1} \subset \mathcal{D}_0$ such that $\left(\bar{\nu}^n, \bar{m}^n,\bar{\lambda}^n\right) \rightarrow(\bar{\nu}, \bar{m},\bar{\lambda})$  in the topology $\tau_2 \otimes \overline{\tau}_2 \otimes \tau_0$ and let $(\nu, m,\lambda) \in$ $\mathcal{D}^{\star}(\bar{\nu}, \bar{m},\bar{\lambda})=\mathcal{D}_{P}(\bar{m}^{\Xb})$. We need to show that up to a subsequence, there exists a sequence  $\left(\nu^n, m^n,\lambda^n\right)_{n \geq 1} \subset$ $\mathcal{D}_0$ such that $\left(\nu^n, m^n,\lambda^n\right) \in \mathcal{D}^{\star}\left(\bar{\nu}^n, \bar{m}^n,\bar{\lambda}^n\right)=\mathcal{D}_{P}\left(\bar{m}^{n,\Xb}\right)$ and $\left(\nu^n, m^n,\lambda^n\right) \rightarrow(\nu, m,\lambda)$ in the topology $\tau_2 \otimes \overline{\tau}_2 \otimes \tau_0$. By the uniform integrability conditions in Definition \ref{def:D0}, \cite[Theorem 7.12]{Villani2003Topics} and \cite[Lemma A.3]{lacker_mean_2015}, it suffices to prove this convergence in the topology $\tau_0\otimes \tau_0 \otimes \tau_0$.
     
     Let $v_{t, x}(d a)$ be such that
	$$
	m_t(d x, d a)dt=v_{t, x}(d a) m^{\Xb}_t(d x) d t.
	$$
By Proposition~\ref{thm:prob-rep-R0}, there exists a filtered probability space 
\((\Omega, \mathcal{F}, \mathbb{F}, \mathbb{P})\) supporting 
$d$-dimensional orthogonal continuous $\mathbb{F}$-martingale measures 
\(M = (M^1, \cdots, M^d)\) with intensity \(v_{t, X_{t-}}(da)dt\),  
a random counting measure \(\mathcal{N}\) on \([0,T] \times \mathbb{A}\) with compensator 
\(\pi(t,a)v_{t, X_{t-}}(da)dt\),  
and a pair of $\mathbb{F}$-progressively measurable processes \((X,R)\) such that
\(X\) takes values in \(\overline{\mathbb{X}}\) and
\(R\) is non-decreasing, RCLL, with \(R_0=0\).
Moreover, \((X,R)\) satisfies
\begin{align*}
	dX_t = \left(\int_{\mathbb{A}} b(t, X_t, \bar{m}^{\Xb}_t, a)\, v_{t, X_t}(da)\right) dt 
	&+ \int_{\mathbb{A}} \sigma(t, X_t, \bar{m}^{\Xb}_t, a)\, M(dt,da) \\
	& + \int_{\mathbb{A}} \beta(t, X_{t-}, \bar{m}^{\Xb}_t, a)\, \tilde{\mathcal{N}}(dt,da) 
	+ m(X_t)\, dR_t, 
	\quad \mathbb{P} \circ X_0^{-1} = m_0^*,
\end{align*}
 and $R_t= \int_0^t \mathbf{1}_{\{\partial\mathbb{X}\}}(X_s) \, dR_s$, where \(\tilde{\mathcal{N}}(dt,da) := \mathcal{N}(dt,da) -\pi(t,a) v_{t, X_{t-}}(da)dt\)  
denotes the compensated random measure.  
Finally, the measures \(\nu\), \(m\), and \(\lambda\) have the following representation
\begin{align*}
	&\nu = \mathbb{P} \circ X_T^{-1}, \\
	&m_t(B \times C)
	= \mathbb{E}^{\mathbb{P}}\!\left[\mathbf{1}_{B}(X_{t})\, v_{t, X_{t}}(C)\right]=\mathbb{E}^{\mathbb{P}}\!\left[\mathbf{1}_{B}(X_{t-})\, v_{t, X_{t-}}(C)\right],
	\quad B \in \mathcal{B}(\overline{\mathbb{X}}),\; C \in \mathcal{B}(\mathbb{A}),\; t-a.e., \\
	&\lambda(D)
	= \mathbb{E}^{\mathbb{P}}\!\left[\int_0^T \mathbf{1}_{D}(t, X_{t})\, dR^c_t\right]=\mathbb{E}^{\mathbb{P}}\!\left[\int_0^T \mathbf{1}_{D}(t, X_{t-})\, dR^c_t\right],
	\quad D \in \mathcal{B}([0,T] \times \partial\mathbb{X}).
\end{align*}
On the same filtered probability space, let us define
\[
m_t^n(B \times C)
:= \mathbb{E}^{\mathbb{P}}\!\left[\mathbf{1}_{B}(X^n_t)\, v_{t, X_t}(C)\right], 
\quad 
\nu^n := \mathbb{P} \circ (X^n_T)^{-1}, 
\quad 
\lambda^n(D) := \mathbb{E}^{\mathbb{P}}\!\left[\int_0^T \mathbf{1}_{D}(t, X^n_t)\, dR^{n,c}_t\right],
\]
where \((X^n, R^n)\) denotes the unique strong solution of the reflected SDE  
\begin{align}\label{sdexn}
	X^n_t
	= X_0 
	&+ \int_0^{t} \int_{\mathbb{A}} b\bigl(s, X^n_s, \bar{m}^{n,\mathbb{X}}_s, a\bigr)\, v_{s, X_s}(da)\, ds
	+ \int_0^{t} \int_{\mathbb{A}} \sigma\bigl(s, X^n_s, \bar{m}^{n,\mathbb{X}}_s, a\bigr)\, M(ds, da) \nonumber \\
	& + \int_0^{t} \int_{\mathbb{A}} \beta\bigl(s, X^n_{s-}, \bar{m}^{n,\mathbb{X}}_s, a\bigr)\, {\tilde{\mathcal{N}}(ds, da)}
	+ \int_0^{t} m(X^n_s)\, dR^n_s,\quad X^n_0=X_0.
\end{align}
Note that existence and uniqueness follow by the Lipschitz and boundedness condition on the coefficients and square integrability of $m^*_0$.
Using an argument similar to Remark~\ref{remark:s_to_r}, we have 
\[
\left(\nu^n, m^n, \lambda^n\right) \in \mathcal{D}_{P}\!\left(\bar{m}^{n,\mathbb{X}}\right) 
= \mathcal{D}^{\star}\!\left(\bar{\nu}^n, \bar{m}^n, \bar{\lambda}^n\right).
\]
We now show \(m^n \to m\) in \(\tau_0\). By \cite[Remark 8.3.1 and Exercise 8.10.71]{Bogachev_2007}, it suffices to test convergence against bounded Lipschitz functions. Let \(\phi: [0, T] \times \overline{\mathbb{X}} \times \mathbb{A} \to \mathbb{R}\) be bounded and Lipschitz continuous.
 Then
{\small	\begin{align*}
		&\left| \int_0^T \int_{\overline{\mathbb{X}} \times \mathbb{A}} \phi(t, x, a)\, m_t(dx, da)\,dt 
		- \int_0^T \int_{\overline{\mathbb{X}} \times \mathbb{A}} \phi(t, x, a)\, m_t^n(dx, da)\,dt \right| \\
		&= \left| \mathbb{E}^{\mathbb{P}} \left[ \int_0^T \int_{\mathbb{A}} \phi(t, X_t, a)\, v_{t,X_t}(da)\,dt
		- \int_0^T \int_{\mathbb{A}} \phi(t, X^n_t, a)\, v_{t,X_t}(da)\,dt \right] \right| \\
		&\leq C \left( \mathbb{E}^{\mathbb{P}}\left[ \sup_{0 \leq t \leq T} |X_t - X^n_t|^2 \right] \right)^{1/2}.
	\end{align*}}
	Now, we show 
	\begin{align}\label{convergenceX}
		\mathbb{E}^{\mathbb{P}}\left[ \sup_{0 \leq t \leq T} |X_t - X^n_t|^2 \right] \rightarrow 0, \qquad \text{as } n \rightarrow \infty.
	\end{align}
	To simplify the notation, we introduce the following shorthand: $b_n(t, x, a):=b(t, x, \bar{m}_t^{n,\Xb}, a)$, $b_0(t, x, a):=b(t, x, \bar{m}_t^{\Xb}, a)$, $\sigma_n(t, x, a):=\sigma(t, x, \bar{m}_t^{n,\Xb}, a)$, $\sigma_0(t, x, a):=\sigma(t, x, \bar{m}_t^{\Xb}, a)$, $\beta_n(t, x, a):=\beta(t, x, \bar{m}_t^{n,\Xb}, a)$ and $\beta_0(t, x, a):=\beta(t, x, \bar{m}_t^{\Xb}, a)$. 
    By Lemma \ref{lemma:comparison}, we get 
  {\small \begin{align*}
        \mathbb{E}^{\mathbb{P}}\left[ \sup_{0 \leq s \leq t} |X_s - X^n_s|^2 \right]&\leq C\bigg(\Eb^{\Pb}\bigg[\left(\int_0^t\int_{\Ab}|b_0(s,X_{s},a)-b_n(s,X^n_s,a)|v_{s,X_s}(da)ds\right)^2\\&+\int_0^t\int_{\Ab}|\sigma_0(t,X_t,a)-\sigma_n(t,X^n_t,a)|^2v_{s,X_s}(da)ds\\&+\int_0^t\int_{\Ab}|\beta_0(t,X_{t-},a)-\beta_n(t,X^n_{t-},a)|^2\mathcal{N}(ds,da)\bigg]  \bigg)\\
        &\leq C\bigg(\Eb^{\Pb}\bigg[\int_0^t\int_{\Ab}|b_0(s,X_{s},a)-b_n(s,X^n_s,a)|^2v_{s,X_s}(da)ds\\&+\int_0^t\int_{\Ab}|\sigma_0(t,X_t,a)-\sigma_n(t,X^n_t,a)|^2v_{s,X_s}(da)ds\\&+\int_0^t\int_{\Ab}|\beta_0(t,X_{t-},a)-\beta_n(t,X^n_{t-},a)|^2\pi(t,a)v_{s,X_{s-}}(da)ds\bigg]  \bigg).
        \end{align*}}
    {
The Lipschitz assumption on \(b\) then yields 
{\small	\begin{align*}
		& \int_0^{t} \int_\Ab\left|b_n\left(r, X_r^n, a\right)-b_0\left(r, X_r, a\right)\right|^2  v_{r,X_r}(da) d r \\
		&= \int_0^{t} \int_\Ab\left|b_n\left(r, X_r^n, a\right)-b_n\left(r, X_r, a\right)+b_n\left(r, X_r, a\right)-b_0\left(r, X_r, a\right)\right|^2v_{r,X_r}(da) d r \\
		& \leq C\left[\int_0^{t} \int_\Ab\left|b_n\left(r, X_r^n, a\right)-b_n\left(r, X_r, a\right)\right|^2 v_{r,X_r}(da) d r+\int_0^{t} \int_\Ab\left|b_n\left(r, X_r, a\right)-b_0\left(r, X_r, a\right)\right|^2 v_{r,X_r}(da) d r\right] \\
		& \leq C\left[\int_0^t \sup _{0\leq r \leq s}\left|X_{r}^n-X_{r}\right|^2 d s+\int_0^{t } \int_\Ab\left|b_n\left(r, X_r, a\right)-b_0\left(r, X_r, a\right)\right|^2 v_{r,X_r}(da) d r\right].
	\end{align*}}
Similarly,  we can also obtain 
	{\small\begin{align*}
		&\int_0^{t} \int_\Ab\left|\sigma_n\left(r, X_r^n, a\right)-\sigma_0\left(r, X_r, a\right)\right|^2  v_{r,X_r}(da) d r\\
		&\leq C\left[\int_0^t \sup _{0\leq r \leq s}\left|X_{r}^n-X_{r}\right|^2 d s+\int_0^{t } \int_\Ab\left|\sigma_n\left(r, X_r, a\right)-\sigma_0\left(r, X_r, a\right)\right|^2 v_{r,X_r}(da) d r\right],\\
		&\int_0^{t} \int_\Ab\left|\beta_n\left(r, X_{r-}^n, a\right)-\beta_0\left(r, X_{r-}, a\right)\right|^2\pi(t,a)  v_{r,X_{r-}}(da) d r\\
		&\leq  C\left[\int_0^t \sup _{0\leq r \leq s}\left|X_{r}^n-X_{r}\right|^2 d s+\int_0^{t} \int_\Ab\left|\beta_n\left(r, X_{r-}, a\right)-\beta_0\left(r, X_{r-}, a\right)\right|^2v_{r,X_{r-}}(da) d r\right].
	\end{align*}}
Putting all the pieces together, we get
	$$
	 \mathbb{E}^{\mathbb{P}}\left[ \sup_{0 \leq s \leq t} |X_s - X^n_s|^2 \right] \leq C\left(\int_0^t  \mathbb{E}^{\mathbb{P}}\left[ \sup_{0 \leq r \leq s} |X_r - X^n_r|^2 \right] d s+A_n+B_n+ C_n\right),
	$$
	where
{\small	
	\begin{align*}
		A_n & :=\mathbb{E}^{\mathbb{P}}\left[\int_0^T \int_\Ab\left|b_n\left(r, X_r, a\right)-b_0\left(r, X_r, a\right)\right|^2 v_{r,X_r}(da) d r\right], \\
		B_n & :=\mathbb{E}^{\mathbb{P}}\left[\int_0^T \int_\Ab\left|\sigma_n\left(r, X_r, a\right)-\sigma_0\left(r, X_r, a\right)\right|^2 v_{r,X_r}(da) d r\right],\\
		C_n& :=\mathbb{E}^{\mathbb{P}}\left[\int_0^{T} \int_\Ab\left|\beta_n\left(r, X_{r-}, a\right)-\beta_0\left(r, X_{r-}, a\right)\right|^2 v_{r,X_{r-}}(da) d r\right].
	\end{align*}
	}
Using Gronwall's inequality yields
	$$
	\mathbb{E}^{\mathbb{P}}\left[ \sup_{0 \leq s \leq T} |X_s - X^n_s|^2 \right]\leq C\left(A_n+B_n+C_n\right) e^{C T}.
	$$	
  Let us next show that $A_n \to 0$ as $n \to \infty$.   Similar to the arguments in upper hemicontinuity,  we can show that for all $\omega \in \Omega$,
  \[
  \int_0^T \int_A \bigl| b_n\bigl(r, X_r(\omega), a\bigr) - b_0\bigl(r, X_r(\omega), a\bigr) \bigr|^2 \, v_{r,X_r(\omega)}(da)\,dr 
  \longrightarrow 0 \quad\text{as } n \to \infty.
  \]
 Indeed, fix $\omega \in \Omega$. To show this convergence, it is sufficient to prove that any subsequence has a further subsequence that converges to zero. For any given subsequence, we can extract a further subsequence (which we do not relabel) such that $\bar{m}^{n,\mathbb{X}}_t \to \bar{m}^{\mathbb{X}}_t$ in $\tau_2$ for $t$-a.e. 
We can then apply Lemma~\ref{lemma:technical} (ii) for this fixed $\omega$. We set $\Theta = [0, T]$, $\mathcal{X} = \mathbb{A}$, $\mathcal{Y}=\mathcal{P}_2(\overline{\mathbb{X}})$, $\eta(\mathrm{d}t) = \mathrm{d}t$, $\psi^n(t)=\bar{m}_t^{n,\mathbb{X}}$, $\psi(t)=\bar{m}^{\mathbb{X}}_t$, the measures $\upsilon_t^n(\mathrm{d}a)=\upsilon_t(\mathrm{d}a) = v_{t,X_t(\omega)}(\mathrm{d}a)$, and 
\[
  \varphi(r,a,y) = \left| b\bigl(r, X_r(\omega), y, a\bigr) - b\bigl(r, X_r(\omega), \psi(r), a\bigr) \right|^2.
\]
As in the proof of upper hemicontinuity, all hypotheses of Lemma~\ref{lemma:technical} (ii) are satisfied. The lemma therefore yields the desired convergence for the subsequence.
 Finally, using the dominated convergence theorem, we deduce that $A_n \to 0$. The convergence results for $B_n$ and $C_n$ follow from the same arguments.}
  Hence \eqref{convergenceX} holds, which leads to $m^n \to m$ in $\tau_0$. 
 An analogous argument shows that $\nu^n \to \nu$ in $\tau_0$. 
Finally, we consider the sequence $\{\lambda^n\}$. Note that $(\nu^n, m^n, \lambda^n) \in \mathcal{D}_{P}(\bar{m}^{n,\mathbb{X}})$ and $(\nu, m, \lambda) \in \mathcal{D}_{P}(\bar{m}^{\mathbb{X}})$. From the compactness of $\mathcal{D}_0$ (Lemma~\ref{lemma:D0}), we get that, up to subsequence, $\lambda^n\rightarrow\tilde{\lambda}$ in $\tau_0$ for some measure $\tilde{\lambda}\in\Mc_{+}([0,T]\times\partial\Xb)$. Passing to the limit in \eqref{upperhemi}, we obtain the following identity for every $u \in C_b^{1,2}([0,T] \times \overline{\mathbb{X}})$:
	\[
	\int_{[0,T] \times \partial\mathbb{X}} \partial_x u(t,x) \cdot m(x) \, \tilde{\lambda}(dt,dx)
	=
	\int_{[0,T] \times \partial\mathbb{X}} \partial_x u(t,x) \cdot m(x) \, \lambda(dt,dx).
	\]
   In the case that $\mathbb{X}=(0,\infty)$, this directly implies $\tilde{\lambda} = \lambda$. In the case that $\mathbb{X}$ is a bounded subset of $\mathbb{R}^d$, applying Lemma~\ref{lemma:boundary_equal} leads to the same conclusion. Therefore, we conclude that, up to a subsequence, $\lambda^n \to \lambda$ in $\tau_0$. }
\end{proof}
\begin{theorem}\label{LPthm}
	Under Assumptions ~\ref{c1} and \ref{assumption:2}, there exists an LPMFE.  Consequently, there also exists an  MFE with a Markovian relaxed control.
\end{theorem}

\begin{proof}
	We first prove the existence of an LPMFE. As noted in Remark~\ref{remark:fix_lp}, this is equivalent to finding a fixed point of  
	\(\Phi\colon \mathcal{D}_0 \rightarrow 2^{\mathcal{D}_0}\).  We apply the Kakutani–Fan–Glicksberg fixed point theorem (see, e.g.,~\cite[Corollary 17.55]{aliprantis_infinite_2006}) to \(\Phi\).   
    
    The set $\mathcal{D}_0$ is a nonempty, convex subset of the locally convex Hausdorff space $\mathcal{M}(\overline{\mathbb{X}}) \times \tilde{V}_0 \times \mathcal{M}([0,T]\times\partial\mathbb{X})$, endowed with the topology $\tau_0 \otimes \tau_0 \otimes \tau_0$.  In Lemma~\ref{lemma:D0}, we established that $\mathcal{D}_0$ is compact under the topology $\tau_2 \otimes \overline{\tau}_2 \otimes \tau_0$. The uniform integrability conditions that define $\mathcal{D}_0$ ensure that 
    $\tau_0\otimes\tau_0\otimes\tau_0$ and  $\tau_2 \otimes \overline{\tau}_2 \otimes \tau_0$ both induce the same topology on $\Dc_0$. Therefore, $\mathcal{D}_0$ is also compact under the $\tau_0 \otimes \tau_0 \otimes \tau_0$ topology.
	  Moreover, \(\Phi\) has  convex values.  It remains to show that \(\Phi\) has a closed graph and nonempty values.  By Berge’s Maximum Theorem (cf.~\cite[Theorem 17.31]{aliprantis_infinite_2006}) and the Closed Graph Theorem (cf. \cite[Theorem 17.11]{aliprantis_infinite_2006}), it suffices to verify that
	\[
	((\bar{\nu},\bar{m},\bar{\lambda}),(\nu,m,\lambda)) \;\longmapsto\;
	\Gamma[\bar{\nu},\bar{m}](\nu,m,\lambda)
	\]
	is continuous on \(\operatorname{Gr}(\mathcal{D}^\star)\).  Let 
	\[
	\bigl((\bar{\nu}^n,\bar{m}^n,\bar{\lambda}^n),(\nu^n,m^n,\lambda^n)\bigr)\;\to\;\bigl((\bar{\nu},\bar{m},\bar{\lambda}),(\nu,m,\lambda)\bigr)
	\quad\text{in }\operatorname{Gr}(\mathcal{D}^\star).
	\]
    	Then, as in the proof of upper hemicontinuity in Lemma~\ref{lemma:D*}, we can deduce the following convergence by noting that the growth condition on $\tilde{f}$ from Remark~\ref{tfremark} for the bounded action space case, which holds even when $\beta$ is not bounded, satisfies the requirement of Lemma~\ref{lemma:technical}~(ii):
	\[
	\int_0^T\!\int_{\overline{\mathbb{X}}\times \mathbb{A}}
	\tilde{f}\bigl(t,x,\bar m^{n,\Xb}_t,a\bigr)\,m^n_t(dx,da)\,dt
	\;\longrightarrow\;
	\int_0^T\!\int_{\overline{\mathbb{X}}\times \mathbb{A}}
	\tilde{f}\bigl(t,x,\bar m^{\Xb}_t,a\bigr)\,m_t(dx,da)\,dt.
	\]
	Similarly, using Lemma~\ref{lemma:technical} (i) gives
	\begin{align*}
		&\int_{\overline{\mathbb{X}}} g\bigl(x,\bar{\nu}^n\bigr)\,\nu^n(dx)
		\;\longrightarrow\;
		\int_{\overline{\mathbb{X}}} g\bigl(x,\bar{\nu}\bigr)\,\nu(dx).
	\end{align*}
	Moreover, it is clear that 
	\begin{align*}
		\int_{[0,T]\times\partial\mathbb{X}} h\bigl(t,x\bigr)\,\lambda^n(dt,dx)
		\;\longrightarrow\;
		\int_{[0,T]\times\partial\mathbb{X}} h\bigl(t,x\bigr)\,\lambda(dt,dx).
	\end{align*}
	Hence, we arrive at
	\[
	\Gamma[\bar\nu^n,\bar m^n](\mu^n,m^n,\lambda^n)
	\;\longrightarrow\;
	\Gamma[\bar\mu,\bar m](\mu,m,\lambda),
	\]
	which shows the desired continuity of \(\Gamma\) on \(\operatorname{Gr}(\mathcal{D}^\star)\).  All conditions of the Kakutani–Fan–Glicksberg theorem are then satisfied, and hence \(\Phi\) admits a fixed point, denoted by $(\nu^*,m^*,\lambda^*)$. By the definition of $\Dc_0$, we have $m^{*,\Xb}\in B_2$; hence, by Theorem~\ref{theorem:equi}, $(\nu^*,m^*,\lambda^*)$ induces an MFE with a Markovian relaxed control.
\end{proof}
\subsection{Existence of LPMFEs in the unbounded case}
We now extend the existence result in the previous subsection to the general case when $b$, $\sigma$ and  $\beta$ have linear growth and the action space $\Ab$ is not necessarily bounded.

As preparation, let us introduce some notations. For any $n \ge 1$, let $(b_n, \sigma_n, \beta_n)$ denote the truncated versions of the coefficients $(b, \sigma, \beta)$, defined by pointwise projection onto the closed ball of radius $n$ in their respective spaces. 
Moreover, we denote by $\Ab_n$ the intersection of $\Ab$ with the ball centred at the origin with radius $n$. 
Since $\mathbb{A}$ is closed, there exists $n_0$ such that for all $n \ge n_0$, the set $\mathbb{A}_n$ is nonempty and compact. Consequently, the truncated data $(b_n, \sigma_n, \beta_n, \pi, \tilde{f}, g, h, \mathbb{A}_n)$ satisfies Assumptions \ref{c1}, \ref{assumption:2} and Remark \ref{tfremark} with the same constants $c_1, c_2, c_3$, and $c_4$ independent of $n$. We note that we do not truncate the $\beta$ appearing in $\tilde{f}$; hence, $\tilde{f}$ itself does not depend on $n$.

Let $\mathcal{L}^n$ be the operator defined as $\mathcal{L}$ but with the truncated coefficients $(b_n, \sigma_n, \beta_n)$. For any measurable flow $\mu$, let $\mathcal{D}^n_{P}(\mu)$ be the set defined analogously to $\mathcal{D}_{P}(\mu)$ (see Definition~\ref{def:lpocm}), but using the operator $\mathcal{L}^n$ and requiring that the measure $m$ is supported on $[0,T]\times\overline{\mathbb{X}} \times \mathbb{A}_n$. 
By Theorem~\ref{LPthm}, for each $n \ge n_0$, there exists an LPMFE $(\nu^n, m^n, \lambda^n)$ for the corresponding truncated problem. 
Indeed, although the statement of Theorem~\ref{LPthm} assumes that $\beta$ is bounded, the proof remains valid even when the $\beta$ appearing in the definition of $\tilde{f}$ is not truncated. For each $n$, the solution $(\nu^n, m^n, \lambda^n)$ can be naturally viewed as an element of the  space $\mathcal{P}_2(\overline{\mathbb{X}}) \times V_2 \times \mathcal{M}_{+}([0,T]\times\partial\mathbb{X})$.

The main tool in the proof is the convergence argument inspired by \cite[Section 5]{lacker_mean_2015}, which is employed to show that a limit point of the sequence $\{(\nu^n, m^n, \lambda^n)\}$ from the case with bounded coefficients is an LPMFE in the original problem with general model coefficients, i.e., a fixed point of the mapping $\Phi$ defined in \eqref{fixedpointmapping}.
\begin{lemma}\label{lemma51}
    The sequence $\{(\nu^n, m^n, \lambda^n)\}$ is relatively compact in $\mathcal{P}_2(\overline{\mathbb{X}}) \times V_2 \times \mathcal{M}_{+}([0,T]\times\partial\mathbb{X})$ under the topology $\tau_2 \otimes \overline{\tau}_2 \otimes \tau_0$. Moreover, the sequence satisfies the following uniform estimates: 
    \begin{align}\label{uni1}
     &\sup_{n\geq1}\int_{\overline{\Xb}}|x|^q\nu^n(dx)<\infty,   \quad \sup_{n\geq1}\int_0^T\int_{\overline{\Xb}\times\Ab}(|x|^q+|a|^q)m^n_t(dx,da)dt<\infty,\nonumber\\&\sup_{n\geq1}\lambda^n([0,T]\times\partial\Xb)<\infty,\quad \sup_{n\geq1}\|m^{n,\Xb}\|^q_{T}< \infty,
    \end{align}
    and
    \begin{align}\label{uni2}
           \lim\limits_{h\rightarrow0}\sup_{n\geq1}\int_0^{T-h}d_{\mathrm{BL}}(m^{n,\mathbb{X}}_{t+h}, m^{n,\mathbb{X}}_{t}) \, \mathrm{d}t=0,
    \end{align}
      where $q$ is the constant specified in  Assumption~\ref{c1}.
  Furthermore, any limit point $(\nu^*, m^*, \lambda^*)$ of a convergent subsequence also satisfies these bounds that
  \begin{align}\label{limit_bounds}
    &\int_{\overline{\Xb}}|x|^q\nu^*(dx)<\infty,   \quad 
        \int_0^T\int_{\overline{\Xb}\times\Ab}(|x|^q+|a|^q)m^*_t(dx,da)dt<\infty, \\
        &\lambda^*([0,T]\times\partial\Xb)<\infty,\quad \|m^{*,\Xb}\|^q_{T}< \infty.\nonumber
    \end{align}
\end{lemma}
\begin{proof}
In the following proof, we use $C$ to denote a generic positive constant that may change from line to line.
     Let $v^{n}_{t, x}(d a)$ be such that $m^n_t(d x, d a)dt=v^n_{t, x}(d a) m^{n,\Xb}_t(d x) d t.$ By Proposition~\ref{thm:prob-rep-R0}, the measures \(\nu^n\), \(m^n\), and \(\lambda^n\) have the following representation
{\small\begin{align*}
	&\nu^n = \mathbb{P}^n \circ (X^n_T)^{-1}, \\
	&m^n_t(B \times C)
	= \mathbb{E}^{\mathbb{P}^n}\!\left[\mathbf{1}_{B}(X^n_{t})\, v^{n}_{t, X^n_{t}}(C)\right]=\mathbb{E}^{\mathbb{P}^n}\!\left[\mathbf{1}_{B}(X^n_{t-})\, v^{n}_{t, X^n_{t-}}(C)\right],
	\quad B \in \mathcal{B}(\overline{\mathbb{X}}),\; C \in \mathcal{B}(\mathbb{A}),\; t-a.e., \\
	&\lambda^n(D)
	= \mathbb{E}^{\mathbb{P}^n}\!\left[\int_0^T \mathbf{1}_{D}(t, X^n_{t})\, dR^{n,c}_t\right]=\mathbb{E}^{\mathbb{P}^n}\!\left[\int_0^T \mathbf{1}_{D}(t, X^n_{t-})\, dR^{n,c}_t\right],
	\quad D \in \mathcal{B}([0,T] \times \partial\mathbb{X}),
\end{align*}}
where \((X^n,R^n)\) is defined on some filtered probability space 
\((\Omega^n, \mathcal{F}^n, \mathbb{F}^n, \mathbb{P}^n)\) satisfying
\begin{eqnarray*}
	dX^n_t &=& \left(\int_{\mathbb{A}} b_n(t, X^n_t, m^{n,\Xb}_t, a)\, v^{n}_{t, X^n_t}(da)\right) dt 
	+ \int_{\mathbb{A}} \sigma_n(t, X^n_t, m^{n,\Xb}_t, a)\, M^n(dt,da) \\
	&& + \int_{\mathbb{A}} \beta_n(t, X^n_{t-}, m^{n,\Xb}_t, a)\, \tilde{\mathcal{N}}^n(dt,da) 
	+ m(X_t)\, dR^n_t, 
	\quad \mathbb{P}^n \circ (X^n_0)^{-1} = m_0^*.
\end{eqnarray*}
Noting that the coefficients $(b_n,\sigma_n,\beta_n)$ satisfy Assumptions \ref{c1} and \ref{assumption:2} with same constants, Lemma \ref{lemma:A6} (i) implies that 
\begin{align}\label{mn_estimate}
    \|m^{n,\Xb}\|^2_{T}\leq C\left[1+\Eb^{\Pb^n}\left[\int_0^T\int_{\Ab}|a|^2v^n_{t,X^n_{t}}(da)dt\right]\right]=C\left[1+\int_0^T\int_{\overline{\Xb}\times\Ab}|a|^2m^n_{t}(dx,da)dt\right].
\end{align}
Therefore, the above inequality, in conjunction with \eqref{app1:estimate}, yields
\begin{align}\label{nun_estimate}
    \int_{\overline{\Xb}}|x|^2\nu^n(dx)\leq \Eb^{\Pb^n}[\sup_{0\leq t\leq T}|X^n_t|^2]\leq C\left[1+\int_0^T\int_{\overline{\Xb}\times\Ab}|a|^2m^n_{t}(dx,da)dt\right].
\end{align}
Next, similar to the proof of Lemma \ref{lemma:D0} (i), we choose the test function $u(t,x) = \phi(x)$, where $\phi \in C_b^2(\overline{\mathbb{X}})$ satisfies $\nabla\phi(x) = m(x)$ on $\partial\mathbb{X}$. Substituting this $u$ into the linear programming constraint for the truncated problem yields 
\begin{align*}
    \lambda^n([0,T]\times\partial\mathbb{X}) &\le \int_{\overline{\mathbb{X}}} |\phi(x)|\,\nu^n(\mathrm{d}x) + \int_{\overline{\mathbb{X}}} |\phi(x)|\,m_0^*(\mathrm{d}x)  + \int_0^T \int_{\overline{\mathbb{X}}\times\mathbb{A}_n} |\mathcal{L}_n\phi(t,x,m^{n,\mathbb{X}}_t,a)| \, m^n_t(\mathrm{d}x,\mathrm{d}a)\mathrm{d}t.
\end{align*}
From the growth property of $(b_n,\sigma_n,\beta_n)$, we get
\[
    |\mathcal{L}_n\phi(t,x,\mu,a)| \le C\left(1+|x|+\left(\int_{\overline{\mathbb{X}}}|z|^2\mu(\mathrm{d}z)\right)^{1/2}+|a|\right),
\]
where the constant $C$ depends on $\phi$, $c_3$, and $\|\pi\|_{\infty}$. Applying this leads to the following inequality:
{\small\begin{align}\label{lambdan_estimate}
    \lambda^n([0,T]\times\partial\mathbb{X})
    &\le C\left(1 + \int_0^T\int_{\overline{\mathbb{X}}\times\mathbb{A}} \left(1+|x|+|a|+\sqrt{\|m^{n,\mathbb{X}}\|^2_{T}}\right) m^n_t(\mathrm{d}x,\mathrm{d}a)\mathrm{d}t\right) \\
    &\le C\left(1 + \int_0^T\int_{\overline{\mathbb{X}}\times\mathbb{A}} (1+|x|^2+|a|^2)m^n_t(\mathrm{d}x,\mathrm{d}a)\mathrm{d}t + \|m^{n,\mathbb{X}}\|_{T}^2\right)\nonumber \\
    &\le C\left(1 + \int_0^T\int_{\overline{\mathbb{X}}\times\mathbb{A}} |a|^2m^n_t(\mathrm{d}x,\mathrm{d}a)\mathrm{d}t + \|m^{n,\mathbb{X}}\|_{T}^2\right)\nonumber \\
    &\le C\left(1 + \int_0^T\int_{\overline{\mathbb{X}}\times\mathbb{A}} |a|^2m^n_t(\mathrm{d}x,\mathrm{d}a)\mathrm{d}t\right),\nonumber
\end{align}}
where in the last inequality we have used \eqref{mn_estimate}.

Fix $a_0\in\Ab_{n_0}$. For $n\geq n_0$, consider an admissible strict control $U^n= (\tilde{\Omega}^n, \tilde{\mathcal{F}}^n, \tilde{\mathbb{F}}^n, \tilde{\mathbb{P}}^n, \tilde{W}^n, \tilde{N}^n, \tilde{X}^n, \tilde{R}^n, \alpha)\in \mathcal{S}_n(m^{n,\Xb})$ in the sense of Definition \ref{def:mfe} with data $(b_n,\sigma_n,\beta_n,\Ab_n)$ and  $\tilde{\mathbb{P}}^n(\alpha_t=a_0 \,\text{for}\, a.e. t)=1$. Then,  Lemma \ref{lemma:A6} (i) implies 
\begin{align}\label{barmn_estimate}
    \Eb^{\tilde{\mathbb{P}}^n}[\sup_{0\leq t\leq T}|\tilde{X}^n_t|^2]\leq C(1+\|m^{n,\Xb}\|^2_T+T|a_0|^2)\leq C\left(1+\int_0^T\int_{\overline{\Xb}\times\Ab}|a|^2m^n_{t}(dx,da)dt\right),
    \end{align}
where $C$ is some constant depending only on $T$, $c_3$, $\|\pi\|_{\infty}$, $m^*_0$ and $a_0$. 

Now, we consider $(\bar{\nu}^n,\bar{m}^n,\bar{\lambda}^n)$ given by
{\small\begin{align*}
		&\bar{\nu}^n := \tilde{\mathbb{P}}^n \circ (\tilde{X}^n_T)^{-1}, \\
		&\bar{m}^n_t(B\times C) := \mathbb{E}^{\tilde{\mathbb{P}}^n}\!\left[\mathbf{1}_{B}(\tilde{X}^n_t)\mathbf{1}_{C}(\alpha_t)\right] = \mathbb{E}^{\tilde{\mathbb{P}}^n}\!\left[\mathbf{1}_{B}(\tilde{X}^n_{t-})\mathbf{1}_{C}(\alpha_t)\right], \quad B \in \mathcal{B}(\overline{\mathbb{X}}),\,\,C \in \mathcal{B}(\mathbb{A}),\,\ t-a.e., \\
		&\bar{\lambda}(D) := \mathbb{E}^{\tilde{\mathbb{P}}^n}\!\left[\int_0^T \mathbf{1}_{D}(t,\tilde{X}^n_t)\,d\tilde{R}^{n,c}_t\right] = \mathbb{E}^{\tilde{\mathbb{P}}^n}\!\left[\int_0^T \mathbf{1}_{D}(t,\tilde{X}^n_{t-})\,d\tilde{R}^{n,c}_t\right], \quad D \in \mathcal{B}([0,T]\times\partial\mathbb{X}).
	\end{align*}}
   By construction, $(\bar{\nu}^n,\bar{m}^n,\bar{\lambda}^n)\in \Dc^n_{P}(m^{n,\Xb})$.
Similar to \eqref{lambdan_estimate}, we have
{\small\begin{align}\label{barlambdan_estimate}
    \bar{\lambda}^n([0,T]\times\partial\mathbb{X})
    &\le C\left(1 + \int_0^T\int_{\overline{\mathbb{X}}\times\mathbb{A}} (|x|^2+|a|^2)\bar{m}^n_t(\mathrm{d}x,\mathrm{d}a)\mathrm{d}t + \|m^{n,\mathbb{X}}\|_{T}^2\right) \\
    &\le C\left(1 + \int_0^T\int_{\overline{\mathbb{X}}\times\mathbb{A}} |x|^2\bar{m}^n_t(\mathrm{d}x,\mathrm{d}a)\mathrm{d}t + |a_0|^2T+\|m^{n,\mathbb{X}}\|_{T}^2\right)\nonumber \\
    &\le C\left(1 + \int_0^T\int_{\overline{\mathbb{X}}\times\mathbb{A}} |a|^2m^n_t(\mathrm{d}x,\mathrm{d}a)\mathrm{d}t\right),\nonumber
\end{align}}where in the last inequality we have used \eqref{mn_estimate} and   \eqref{barmn_estimate}. Using the optimality of $(\nu^n,m^n,\lambda^n)$, the upper bounds of $\tilde{f}$, $h$, and $g$, and then \eqref{mn_estimate}, \eqref{nun_estimate}, \eqref{barmn_estimate} and \eqref{barlambdan_estimate} to get 
{\small\begin{align}\label{rhs}
    \Gamma[\nu^n,m^n](\nu^n,m^n,\lambda^n)&\leq  \Gamma[\nu^n,m^n](\bar{\nu}^n,\bar{m}^n,\bar{\lambda}^n)\nonumber\\
    &\leq C\bigg(\int_0^T\int_{\overline{\Xb}\times\Ab}(1+|x|^2+\|m^{n,\Xb}\|^2_{T}+|a_0|^q)\bar{m}^n_t(dx,da)dt+\bar{\lambda}^n([0,T]\times\partial\mathbb{X})\nonumber\\&\quad+\int_{\overline{\Xb}}(1+|x|^2+\int_{\overline{\Xb}}|z|^2\nu^n(dz))\bar{\nu}^n(dx)\bigg)\nonumber\\
    &\leq C\left(1 + \int_0^T\int_{\overline{\mathbb{X}}\times\mathbb{A}} |a|^2m^n_t(\mathrm{d}x,\mathrm{d}a)\mathrm{d}t\right).
\end{align}}
On the other hand, we may use the lower bounds on $\tilde{f}$, $h$, and $g$ along with \eqref{mn_estimate}, \eqref{nun_estimate} and \eqref{lambdan_estimate} to get
{\small\begin{align}\label{lhs}
    \Gamma[\nu^n,m^n](\nu^n,m^n,\lambda^n)&\geq -C\bigg(\int_0^T\int_{\overline{\Xb}\times\Ab}(1+|x|^2+\|m^{n,\Xb}\|^2_{T}+|a|^2)m^n_t(dx,da)dt+\lambda^n([0,T]\times\partial\mathbb{X})\nonumber\\&\quad+\int_{\overline{\Xb}}(1+|x|^2+\int_{\overline{\Xb}}|z|^2\nu^n(dz))\nu^n(dx)\bigg)+c_2\int_0^T\int_{\overline{\Xb}\times\Ab}|a|^qm^n_t(dx,da)dt\nonumber\\
    &\geq -C\left(1 + \int_0^T\int_{\overline{\mathbb{X}}\times\mathbb{A}} |a|^2m^n_t(\mathrm{d}x,\mathrm{d}a)\mathrm{d}t\right)+c_2\int_0^T\int_{\overline{\Xb}\times\Ab}|a|^qm^n_t(dx,da)dt.
\end{align}}Combining \eqref{rhs} and \eqref{lhs} and rearranging terms, we obtain the existence of constants $\kappa_1 \in \mathbb{R}$ and $\kappa_2 > 0$, independent of $n$, such that
\begin{align*}
    \int_0^T\int_{\overline{\Xb}\times\Ab}(|a|^q+\kappa_1|a|^2)m^n_t(dx,da)dt\leq \kappa_2.
\end{align*}
As $q>2$, it holds for all sufficiently large $|a|$ that $|a|^q + \kappa_1 |a|^2 \ge \frac{1}{2}|a|^q$. This implies that
\begin{align*}
    \sup_{n\geq 1}\int_0^T\int_{\overline{\Xb}\times\Ab}|a|^qm^n_t(dx,da)dt< \infty.
\end{align*}
 Combined with  Lemma \ref{lemma:A6} (i) and  \eqref{lambdan_estimate}, we establish \eqref{uni1}. 
For each $n \ge 1$, by reasoning as in the proof of \eqref{m_contin}, we have
\begin{align*}
    \int_0^{T-h}d_{\mathrm{BL}}(m^{n,\mathbb{X}}_{t+h}, m^{n,\mathbb{X}}_{t}) \, \mathrm{d}t\leq T\bigg(\sup_{t\in[0,T]}\Eb^{\Pb^n}\left[|X^n_{(t+h)\wedge T}-X^n_{t}|^2\right]\bigg)^{\frac{1}{2}}.
\end{align*}
Therefore, to establish \eqref{uni2}, it is sufficient to show that 
\begin{align}\label{suff_contin}
\lim\limits_{h\rightarrow0}\sup_{n\geq1}\sup_{t\in[0,T]}\Eb^{\Pb^n}\left[|X^n_{(t+h)\wedge T}-X^n_{t}|^2\right]=0.
\end{align}
To this end, we apply Corollary~\ref{corollary: A5} (ii). For any $t \in [0,T]$, combining with the compensation formula and the growth condition of coefficients yields
  {\small      \begin{align*}
            \mathbb{E}^{\mathbb{P}^n}\left[|X^n_{(t+h)\wedge T}-X^n_{t}|^2\right]&\leq C\bigg(\Eb^{\Pb^n}\bigg[\bigg(\int_t^{(t+h)\wedge T}|b_n(s,X^n_s,m^{n,\Xb}_s,a)|v^n_{s,X^n_s}(da)ds\bigg)^2\\&+\int_t^{(t+h)\wedge T}\int_{\Ab}|\sigma_n(s,X^n_s,m^{n,\Xb}_s,a)|^2v^n_{s,X^n_s}(da)ds\\&+\int_t^{(t+h)\wedge T}\int_{\Ab}|\beta_n(s,X^n_{s-},m^{n,\Xb}_s,a)|^2\pi(s,a)v^n_{s,X^n_{s-}}(da)ds\bigg]\bigg)\\
            &\leq C\bigg(\int_t^{(t+h)\wedge T}\int_{\overline{\Xb}\times\Ab}(1+|x|^2+|a|^2+ \|m^{n,\Xb}\|^2_{T})m^n_t(dx,da)dt\bigg).
        \end{align*}}
Applying \eqref{mn_estimate} and Hölder's inequality with conjugate exponents $q/2$ and $q/(q-2)$, 
{\small      \begin{align*}
            \mathbb{E}^{\mathbb{P}^n}\left[|X^n_{(t+h)\wedge T}-X^n_{t}|^2\right] 
            &\leq C\left(1+\int_0^T\int_{\overline{\Xb}\times\Ab}|a|^2m^n_{t}(dx,da)dt\right)h+Ch^{\frac{q-2}{q}}\bigg[\bigg(\int_0^{T}\int_{\overline{\Xb}\times\Ab}|x|^qm^n_t(dx,da)dt\bigg)^{\frac{2}{q}}\\&+\bigg(\int_0^{T}\int_{\overline{\Xb}\times\Ab}|a|^qm^n_t(dx,da)dt\bigg)^{\frac{2}{q}}\bigg].
        \end{align*}}
 Therefore, the right-hand side vanishes as $h \to 0$, uniformly in $n$ and $t$. This proves \eqref{suff_contin} and hence \eqref{uni2}.

 Finally, with \eqref{uni1} and \eqref{uni2} established, the relative compactness of the sequence $\{(\nu^n, m^n, \lambda^n)\}$ follows from arguments analogous to the proof of Lemma~\ref{lemma:D0} (i). Moreover, by the same reasoning in that proof, any limit point $(\nu^*, m^*, \lambda^*)$ of a convergent subsequence also satisfies the integrability conditions in \eqref{limit_bounds}. The estimate for $\|m^{*,\Xb}\|_T^q$ then follows directly from Lemma~\ref{lemma:A6} (i).
\end{proof}
Let $(\nu^*, m^*, \lambda^*)$ be the limit of a convergent subsequence of $\{(\nu^n, m^n, \lambda^n)\}$ in the topology $\tau_2 \otimes \overline{\tau}_2 \otimes \tau_0$.  As in Lemma~\ref{lemma:4.5}, we can also conclude that $m^{n,\Xb} \to m^{*,\Xb}$ in $\tilde{\tau}_2$.
\begin{lemma}\label{lemma52}
    The limit point $(\nu^*, m^*, \lambda^*)\in \Dc_{P}(m^{*,\Xb})$.
\end{lemma}
\begin{proof}
    Given the integrability conditions established in \eqref{limit_bounds}, it suffices to show that $(\nu^*, m^*, \lambda^*)$ satisfies the linear constraint \eqref{lpconstraint}.
    As $(\nu^n, m^n, \lambda^n)\in\mathcal{D}^n_{P}(m^{n,\Xb})$, for any $u\in C^{1,2}_b([0,T]\times \overline{\mathbb{X}})$, we have
	\begin{align*}
		\int_{\overline{\mathbb{X}}}u(T,x)\,\nu^n(dx)
		= \int_{\overline{\mathbb{X}}}u(0,x)\,m^*_{0}(dx)
		&+ \int_{[0,T]\times \overline{\mathbb{X}}\times\mathbb{A}} \bigl(\partial_t u+\Lc^n u\bigr)(t,x,m^{n,\Xb}_t,a)\,m^n_t(dx,da)dt \\
		& + \int_{[0,T]\times \partial \mathbb{X}}\mathcal{A}u(t,x)\,\lambda^n(dt,dx). \nonumber
	\end{align*}
    Therefore, to prove that $(\nu^*, m^*, \lambda^*)\in \Dc_{P}(m^{*,\Xb})$, it suffices to show that
\begin{align}\label{lemma4.12:limit}
  \lim_{n\rightarrow\infty}\int_{[0,T]\times \overline{\mathbb{X}}\times\mathbb{A}} \Lc^n u(t,x,m^{n,\Xb}_t,a)\,m^n_t(dx,da)dt=\int_{[0,T]\times \overline{\mathbb{X}}\times\mathbb{A}} \Lc u(t,x,m^{*,\Xb}_t,a)\,m^*_t(dx,da)dt.
\end{align}
Note that 
{\small\begin{align*}
   &\bigg |\int_{[0,T]\times \overline{\mathbb{X}}\times\mathbb{A}} \Lc^n u(t,x,m^{n,\Xb}_t,a)\,m^n_t(dx,da)dt-\int_{[0,T]\times \overline{\mathbb{X}}\times\mathbb{A}} \Lc u(t,x,m^{*,\Xb}_t,a)\,m^*_t(dx,da)dt\bigg|\\&\leq   \bigg |\int_{[0,T]\times \overline{\mathbb{X}}\times\mathbb{A}} \Lc^n u(t,x,m^{n,\Xb}_t,a)\,m^n_t(dx,da)dt-\int_{[0,T]\times \overline{\mathbb{X}}\times\mathbb{A}} \Lc u(t,x,m^{n,\Xb}_t,a)\,m^n_t(dx,da)dt\bigg| \\&+\bigg |\int_{[0,T]\times \overline{\mathbb{X}}\times\mathbb{A}} \Lc u(t,x,m^{n,\Xb}_t,a)\,m^n_t(dx,da)dt-\int_{[0,T]\times \overline{\mathbb{X}}\times\mathbb{A}} \Lc u(t,x,m^{*,\Xb}_t,a)\,m^*_t(dx,da)dt\bigg|.
\end{align*}}The second term on the right-hand side vanishes as $n\to\infty$, as in the proof of upper hemicontinuity in Lemma~\ref{lemma:D*}.
 For the first term, we have 
\begin{align*}
&\int_{[0,T]\times \overline{\mathbb{X}}\times\mathbb{A}} \Lc^n u(t,x,m^{n,\Xb}_t,a)\,m^n_t(dx,da)dt-\int_{[0,T]\times \overline{\mathbb{X}}\times\mathbb{A}} \Lc u(t,x,m^{n,\Xb}_t,a)\,m^n_t(dx,da)dt\\&\quad
=\int_{[0,T]\times \overline{\mathbb{X}}\times\mathbb{A}} (b_n(t,x,m^{n,\Xb}_t,a)-b(t,x,m^{n,\Xb}_t,a))\cdot\partial_x u(t,x)\,m^n_t(dx,da)dt\\
&\quad+\int_{[0,T]\times \overline{\mathbb{X}}\times\mathbb{A}} \frac{1}{2}\mathrm{tr}((\sigma_n\sigma^{\top}_n(t,x,m^{n,\Xb}_t,a)-\sigma\sigma^{\top}(t,x,m^{n,\Xb}_t,a))\partial^2_{xx} u(t,x))\,m^n_t(dx,da)dt\\&\quad+
\int_{[0,T]\times \overline{\mathbb{X}}\times\mathbb{A}} [u(t,\proj(x+\beta_n(t,x,m^{n,\Xb}_t,a))-u(t,\proj(x+\beta(t,x,m^{n,\Xb}_t,a))\\&\quad-(\beta_n(t,x,m^{n,\Xb}_t,a)-\beta(t,x,m^{n,\Xb}_t,a))\cdot\partial_x u(t,x)]\pi(t,a)\,m^n_t(dx,da)dt.
\end{align*}
We will show that the third term on the right-hand side of the above equality tends to $0$ as $n\to\infty$, and the convergence of other terms can be established by the same fashion. By construction, we have 
{\small
{\begin{align*}
&\left|\left[u(t,\proj(x+\beta_n(t,x,m^{n,\Xb}_t,a))-u(t,\proj(x+\beta(t,x,m^{n,\Xb}_t,a))-(\beta_n(t,x,m^{n,\Xb}_t,a)-\beta(t,x,m^{n,\Xb}_t,a))\cdot\partial_x u(t,x)\right]\right|\\&\leq C\left[|u(t,\proj(x+\beta_n(t,x,m^{n,\Xb}_t,a))-u(t,\proj(x+\beta(t,x,m^{n,\Xb}_t,a))| +|\beta_n(t,x,m^{n,\Xb}_t,a)-\beta(t,x,m^{n,\Xb}_t,a)|\right]\\&\leq 
    C\left[2+|\beta_n(t,x,m^{n,\Xb}_t,a)-\beta(t,x,m^{n,\Xb}_t,a)|\right]\indicator{|\beta(t,x,m^{n,\Xb}_t,a)|>n|}.
\end{align*}}}Under Assumption~\ref{c1}, the condition $|\beta(t,x,m^{n,\Xb}_t,a)| > n$ implies that $c_3(1+|x|+(\int_{\overline{\Xb}}|z|^2m^{n,\Xb}_t(dz))^{\frac{1}{2}}+|a|)>n$. Consequently,
{\small\begin{align*}
   \bigg| &\int_{[0,T]\times \overline{\mathbb{X}}\times\mathbb{A}}[u(t,\proj(x+\beta_n(t,x,m^{n,\Xb}_t,a))-u(t,\proj(x+\beta(t,x,m^{n,\Xb}_t,a))\\&\qquad\quad-(\beta_n(t,x,m^{n,\Xb}_t,a)-\beta(t,x,m^{n,\Xb}_t,a))\cdot\partial_x u(t,x)]\pi(t,a)\,m^n_t(dx,da)dt\bigg|\\&\leq C\int_{[0,T]\times \overline{\mathbb{X}}\times\mathbb{A}} \left(2+|\beta_n(t,x,m^{n,\Xb}_t,a)-\beta(t,x,m^{n,\Xb}_t,a)|\right)\indicator{|\beta(t,x,m^{n,\Xb}_t,a)|>n|}\,m^n_t(dx,da)dt\\
   &\leq C\int_{[0,T]\times \overline{\mathbb{X}}\times\mathbb{A}}\left(1+|x|+(\int_{\overline{\Xb}}|z|^2m^{n,\Xb}_t(dz))^{\frac{1}{2}}+|a|\right)\indicator{c_3(1+|x|+(\int_{\overline{\Xb}}|z|^2m^{n,\Xb}_t(dz))^{\frac{1}{2}}+|a|)>n}m^n_t(dx,da)dt\\
   &\leq \frac{C}{n}\int_{[0,T]\times \overline{\mathbb{X}}\times\mathbb{A}}\left(1+|x|+(\int_{\overline{\Xb}}|z|^2m^{n,\Xb}_t(dz))^{\frac{1}{2}}+|a|\right)^2m^n_t(dx,da)dt.
\end{align*}}
The uniform integrability property from~\eqref{uni1} ensures that the final term converges to $0$ as $n\to\infty$, which completes the proof.
\end{proof}
\begin{lemma}\label{lemma53}
    For each $(\tilde{\nu},\tilde{m},\tilde{\lambda})\in \Dc_{P}(m^{*,\Xb})$ such that $\Gamma[\nu^*,m^*](\tilde{\nu},\tilde{m},\tilde{\lambda})<\infty$, there exists $(\tilde{\nu}^n,\tilde{m}^n,\tilde{\lambda}^n)\in\Dc^n_{P}(m^{n,\Xb})$ such that 
    \begin{align}\label{lemma53:conver}
        \Gamma[\nu^*,m^*](\tilde{\nu},\tilde{m},\tilde{\lambda})=\lim_{n\rightarrow\infty}\Gamma[\nu^n,m^n](\tilde{\nu}^n,\tilde{m}^n,\tilde{\lambda}^n).
    \end{align}
\end{lemma}
\begin{proof}
    First, the lower bounds of $\tilde{f}$, $h$ and $g$ imply
    \begin{align*}
        \Gamma[\nu^*,m^*](\tilde{\nu},\tilde{m},\tilde{\lambda})&\geq c_2\int_{[0,T]\times \overline{\mathbb{X}}\times\Ab}|a|^q \tilde{m}_t(dx,da)dt -C \bigg(1+\int_{[0,T]\times \overline{\mathbb{X}}\times\Ab}(|x|^2+|a|^2)\tilde{m}_t(dx,da)dt\\
        &+\int_{\overline{\Xb}}|x|^2\tilde{\nu}(dx)+\int_{\overline{\Xb}}|x|^2\nu^*(dx)+\tilde{\lambda}([0,T]\times\partial\Xb)+\|m^{*,\Xb}\|_{T}^2\bigg).
    \end{align*}
    As $\|m^{*,\Xb}\|_{T}^2+\int_{\overline{\Xb}}|x|^2\nu^*(dx)<\infty$ by \eqref{limit_bounds}, and given that $(\tilde{\nu},\tilde{m},\tilde{\lambda})\in \Pc_2(\overline{\mathbb{X}}) \times V_2 \times \Mc_{+}([0,T]\times \partial\mathbb{X})$, the assumption that $\Gamma[\nu^*,m^*](\tilde{\nu},\tilde{m},\tilde{\lambda})<\infty$ implies
    \begin{align}\label{lemma53:bound}
        \int_{[0,T]\times \overline{\mathbb{X}}\times\Ab}|a|^q \tilde{m}_t(dx,da)dt <\infty.
    \end{align}
  Let $\tilde{v}_{t, x}(d a)$ be such that $
	\tilde{m}_t(d x, d a)dt=\tilde{v}_{t, x}(d a) \tilde{m}^{\Xb}_t(d x) d t.$
By Proposition~\ref{thm:prob-rep-R0}, there exists a filtered probability space 
\((\tilde{\Omega}, \tilde{\mathcal{F}}, \tilde{\mathbb{F}}, \tilde{\mathbb{P}})\) supporting $d$-dimensional orthogonal continuous $\tilde{\mathbb{F}}$-martingale measures $M = (M^1, \cdots, M^d)$ with intensity measure $\tilde{v}_{t, X_{t-}}(da)dt$, a random counting measure $\mathcal{N}$ on $[0,T] \times \mathbb{A}$ with compensator $\pi(t,a)\tilde{v}_{t, X_{t-}}(da)dt$,
and a pair of $\tilde{\mathbb{F}}$-progressively measurable processes $(X,R)$ taking values in $\overline{\mathbb{X}} \times \mathbb{R}_+$. 
Here, \(R\) is non-decreasing, RCLL, with \(R_0=0\). 
Moreover, \((X,R)\) satisfies
{\small\begin{eqnarray*}
	dX_t &=& \left(\int_{\mathbb{A}} b(t, X_t, m^{*,\Xb}_t, a)\, \tilde{v}_{t, X_t}(da)\right) dt 
	+ \int_{\mathbb{A}} \sigma(t, X_t, m^{*,\Xb}_t, a)\, M(dt,da) \\
	& &+ \int_{\mathbb{A}} \beta(t, X_{t-}, m^{*,\Xb}_t, a)\, \tilde{\mathcal{N}}(dt,da) 
	+ m(X_t)\, dR_t, 
	\quad \tilde{\mathbb{P}} \circ X_0^{-1} = m_0^*,
\end{eqnarray*}}
and $R_t = \int_0^t \mathbf{1}_{\{\partial\mathbb{X}\}}(X_s)  dR_s$, where \(\tilde{\mathcal{N}}(dt,da) := \mathcal{N}(dt,da) - \pi(t,a)\tilde{v}_{t, X_{t-}}(da)dt\)  
denotes the compensated random measure.  
Finally, the measures \(\tilde{\nu}\), \(\tilde{m}\), and \(\tilde{\lambda}\) have the following representation
\begin{align*}
	&\tilde{\nu} = \tilde{\mathbb{P}} \circ X_T^{-1}, \\
	&\tilde{m}_t(B \times C)
	= \mathbb{E}^{\tilde{\mathbb{P}}}\!\left[\mathbf{1}_{B}(X_{t})\, \tilde{v}_{t, X_{t}}(C)\right]=\mathbb{E}^{\tilde{\mathbb{P}}}\!\left[\mathbf{1}_{B}(X_{t-})\, \tilde{v}_{t, X_{t-}}(C)\right],
	\quad B \in \mathcal{B}(\overline{\mathbb{X}}),\; C \in \mathcal{B}(\mathbb{A}),\; t-a.e., \\
	&\tilde{\lambda}(D)
	= \mathbb{E}^{\tilde{\mathbb{P}}}\!\left[\int_0^T \mathbf{1}_{D}(t, X_{t})\, dR^c_t\right]=\mathbb{E}^{\tilde{\mathbb{P}}}\!\left[\int_0^T \mathbf{1}_{D}(t, X_{t-})\, dR^c_t\right],
	\quad D \in \mathcal{B}([0,T] \times \partial\mathbb{X}).
\end{align*}
Find a measurable mapping $\iota_n:\mathbb{A}\to\mathbb{A}_n$  such that $   \iota_n(\Ab)\subset\Ab_n$ and $\iota_n(a)=a$ for all $a\in \mathbb{A}_n$, so that $\iota_n$ converges pointwise to the indentity. On the same filtered probability space, let $(X^n, R^n)$ denote the unique strong solution to 
\begin{align*}
	X^n_t
	= X_0 
	&+ \int_0^{t} \int_{\mathbb{A}} b_n\bigl(s, X^n_s, m^{n,\mathbb{X}}_s, \iota_n(a)\bigr)\, \tilde{v}_{s, X_s}(da)\, ds
	+ \int_0^{t} \int_{\mathbb{A}} \sigma_n\bigl(s, X^n_s, m^{n,\mathbb{X}}_s, \iota_n(a)\bigr)\, M(ds, da) \nonumber \\
	& + \int_0^{t} \int_{\mathbb{A}} \beta_n\bigl(s, X^n_{s-}, m^{n,\mathbb{X}}_s, \iota_n(a)\bigr)\, {\tilde{\mathcal{N}}(ds, da)}
	+ \int_0^{t} m(X^n_s)\, dR^n_s,\quad X^n_0=X_0.
\end{align*}
For $(t,x)\in [0,T]\times\overline{\mathbb{X}}$, let $\tilde{v}^n_{t,x}$ be the image measure of $\tilde{v}_{t,x}$ under the map $a\mapsto\iota_n(a)$. We then define
\begin{align*}
    \tilde{m}_t^n(B \times C) := \mathbb{E}^{\tilde{\mathbb{P}}}\!\left[\mathbf{1}_{B}(X^n_t)\, \tilde{v}^n_{t, X_t}(C)\right], \quad
    \tilde{\nu}^n := \tilde{\mathbb{P}} \circ (X^n_T)^{-1}, \quad
    \tilde{\lambda}^n(D) := \mathbb{E}^{\tilde{\mathbb{P}}}\!\left[\int_0^T \mathbf{1}_{D}(t, X^n_t)\, dR^{n,c}_t\right].
\end{align*}
By construction, the triple $(\tilde{\nu}^n,\tilde{m}^n,\tilde{\lambda}^n)$ belongs to $\Dc^n_{P}(m^{n,\Xb})$. Moreover, it follows from Lemma \ref{lemma:A6} (i), \eqref{uni1} and \eqref{lemma53:bound} that 
\begin{align}\label{lemma53:xn}
    \Eb^{\tilde{\Pb}}\left[\sup_{0\leq t\leq T}|X^n_t|^q\right]&\leq C\left(1+\|m^{n,\Xb}\|_T^q+\Eb^{\tilde{\Pb}}\left[\int_0^T\int_{\Ab}|a|^q\tilde{v}^n_{t,X_t}(da)dt\right]\right)\\&=C\left(1+\|m^{n,\Xb}\|_T^q+\Eb^{\tilde{\Pb}}\left[\int_0^T\int_{\Ab}|\iota_n(a)|^q\tilde{v}_{t,X_t}(da)dt\right]\right)\nonumber\\
    &=C\left(1+\|m^{n,\Xb}\|_T^q+\int_{[0,T]\times \overline{\mathbb{X}}\times\Ab}|\iota_n(a)|^q \tilde{m}_t(dx,da)dt\right)\nonumber\\&
    \leq C\left(1+\sup_{n\geq1}\|m^{n,\Xb}\|_T^q+\int_{[0,T]\times \overline{\mathbb{X}}\times\Ab}|a|^q \tilde{m}_t(dx,da)dt\right)<\infty.\nonumber
\end{align}
This uniform bound implies the following moment estimates
\begin{align*}
    \int_0^T\int_{\overline{\Xb}\times\Ab}|x|^q+|a|^q \tilde{m}^n_t(dx,da)dt&= \Eb^{\tilde{\Pb}}\left[\int_0^T|X^n_t|^qdt+\int_0^T\int_{\Ab}|\iota_n(a)|^q\tilde{v}_{t,X_t}(da)dt\right]\\&\leq C\left(\sup_{n\geq1}\Eb^{\tilde{\Pb}}\left[\sup_{0\leq t\leq T}|X^n_t|^q\right]+\int_{[0,T]\times \overline{\mathbb{X}}\times\Ab}|a|^q \tilde{m}_t(dx,da)dt\right)<\infty,\\
    \int_{\overline{\Xb}}|x|^q\tilde{\nu}^n(dx)&\leq \sup_{n\geq1}\Eb^{\tilde{\Pb}}\left[\sup_{0\leq t\leq T}|X^n_t|^q\right]<\infty.
\end{align*}
 Similar to \eqref{lambdan_estimate}, we can also derive 
\begin{align*}
    \tilde{\lambda}^n([0,T]\times\partial\Xb)\leq C\left(1+\int_0^T\int_{\overline{\Xb}\times\Ab}(|x|^2+|a|^2)\tilde{m}^n_t(dx,da)dt+\|m^{n,\Xb}\|_{T}^2\right).
\end{align*}
These estimates collectively establish the uniform integrability of the sequence $(\tilde{\nu}^n,\tilde{m}^n,\tilde{\lambda}^n)$, namely
\begin{align}\label{uni3}
    \sup_{n\geq1}\int_{\overline{\Xb}}|x|^q\tilde{\nu}^n(dx)<\infty,\quad\sup_{n\geq1} \int_0^T\int_{\overline{\Xb}\times\Ab}|x|^q+|a|^q \tilde{m}^n_t(dx,da)dt<\infty,\quad \sup_{n\geq1} \tilde{\lambda}^n([0,T]\times\partial\Xb)<\infty.
\end{align}
Next, we show that $(\tilde{\nu}^n,\tilde{m}^n,\tilde{\lambda}^n)$ converges to $(\tilde{\nu},\tilde{m},\tilde{\lambda})$ in the topology $\tau_2 \otimes \overline{\tau}_2 \otimes \tau_0$. In light of the uniform integrability established in~\eqref{uni3}, it is sufficient to prove convergence in  $\tau_0 \otimes \tau_0 \otimes \tau_0$. The argument is analogous to that used to prove the lower hemicontinuity of $\mathcal{D}^*$ in Lemma~\ref{lemma:D*}. We begin by showing that $\tilde{m}^n \to \tilde{m}$ in $\tau_0$. 
To this end, let $\phi: [0, T] \times \overline{\mathbb{X}} \times \mathbb{A} \to \mathbb{R}$ be a bounded and Lipschitz continuous function.
 Then, we have
	\[
	\begin{aligned}
		&\left| \int_0^T \int_{\overline{\mathbb{X}} \times \mathbb{A}} \phi(t, x, a)\, \tilde{m}_t(dx, da)\,dt 
		- \int_0^T \int_{\overline{\mathbb{X}} \times \mathbb{A}} \phi(t, x, a)\, \tilde{m}_t^n(dx, da)\,dt \right| \\
		&= \left| \mathbb{E}^{\tilde{\mathbb{P}}} \left[ \int_0^T \int_{\mathbb{A}} \phi(t, X_t, a)\, \tilde{v}_{t,X_t}(da)\,dt
		- \int_0^T \int_{\mathbb{A}} \phi(t, X^n_t, \iota_n(a))\, \tilde{v}_{t,X_t}(da)\,dt \right]\right| \\
        &\leq C     \mathbb{E}^{\tilde{\mathbb{P}}} \left[\int_0^T|X^n_t-X_t|\,dt + \int_0^T \int_{\mathbb{A}} \left|a-\iota_n(a)\right|\, \tilde{v}_{t,X_t}(da)\,dt
		\right]\\
		&\leq C\bigg( \left( \mathbb{E}^{\tilde{\mathbb{P}}}\left[ \sup_{0 \leq t \leq T} |X_t - X^n_t|^2 \right] \right)^{1/2}+\int_{[0,T]\times\overline{\Xb}\times\Ab}|a-\iota_n(a)|\tilde{m}_t(dx,da)dt\bigg).
	\end{aligned}
	\]
For the second term on the right-hand side of the inequality, the pointwise convergence of $\iota_n(a)$ to $a$ and \eqref{lemma53:bound} allow us to apply the dominated convergence theorem, which implies that this term tends to $0$ as $n\to\infty$. Therefore, it suffices to show that
\begin{align}\label{convergence2}
	\mathbb{E}^{\tilde{\mathbb{P}}}\left[ \sup_{0 \leq t \leq T} |X_t - X^n_t|^2 \right] \to 0 \quad \text{as } n \to \infty,
\end{align}
which can be established by an argument analogous to that of \eqref{convergenceX} and \cite[Lemma 5.3]{lacker_mean_2015}. A similar argument yields the convergence $\tilde{\nu}_n\rightarrow\tilde{\nu}$. The convergence $\tilde{\lambda}_n\rightarrow\tilde{\lambda}$ follows from an argument analogous to that for $\lambda^n$ in the lower hemicontinuity proof of Lemma~\ref{lemma:D*}, combined with the fact that
\begin{align*}
  \lim_{n\rightarrow\infty}\int_{[0,T]\times \overline{\mathbb{X}}\times\mathbb{A}} \Lc^n u(t,x,m^{n,\Xb}_t,a)\,\tilde{m}^n_t(dx,da)dt=\int_{[0,T]\times \overline{\mathbb{X}}\times\mathbb{A}} \Lc u(t,x,m^{*,\Xb}_t,a)\,\tilde{m}_t(dx,da)dt,
\end{align*}
which can be proved similarly to \eqref{lemma4.12:limit}.

We are now ready to prove~\eqref{lemma53:conver}. The convergence of $\int_{\overline{\Xb}}g(x,\nu^n)\tilde{\nu}^n(dx)$ is a direct consequence of Lemma~\ref{lemma:technical} (i), while that of $\int_{[0,T]\times\partial\Xb}h(t,x)\tilde{\lambda}^n(dt,dx)$ follows from the weak convergence of $\tilde{\lambda}^n$. However, the term involving $\tilde{f}$ requires a separate argument, as $\tilde{f}$ does not satisfy the growth condition in Lemma~\ref{lemma:technical} (ii). Note that
\begin{align*}
    \int_{[0,T]\times\overline{\Xb}\times\Ab}\tilde{f}(t,x,m^{n,\Xb}_t,a)\,\tilde{m}_t^n(dx,da)dt=\mathbb{E}^{\tilde{\Pb}}\left[\int_0^T\int_{\Ab}\tilde{f}(t,X^n_t,m^{n,\Xb}_t,\iota_n(a))\,\tilde{v}_{t,X_t}(da)dt\right].
\end{align*}
As the $m^{n,\Xb} \to m^{*,\Xb}$ in $\tilde{\tau}_2$, along a subsequence, $m^{n,\Xb}_t$ converges to $m^{*,\Xb}_t$ in $\tau_2$ $t$-a.e. This result, combined with the continuity of $\tilde{f}$, its growth condition in Remark \ref{tfremark}, \eqref{lemma53:bound}, \eqref{lemma53:xn} and \eqref{convergence2}, and the pointwise convergence of $\iota_n(a)$, justifies the use of the dominated convergence theorem. We can thus conclude that
\begin{align*}
    \lim_{n\to\infty}\int_{[0,T]\times\overline{\Xb}\times\Ab}\tilde{f}(t,x,m^{n,\Xb}_t,a)\,\tilde{m}_t^n(dx,da)dt 
    & = \mathbb{E}^{\tilde{\Pb}}\left[\int_0^T\int_{\Ab}\tilde{f}(t,X_t,m^{*,\Xb}_t,a)\,\tilde{v}_{t,X_t}(da)dt\right] \\
    & = \int_{[0,T]\times\overline{\Xb}\times\Ab}\tilde{f}(t,x,m^{*,\Xb}_t,a)\,\tilde{m}_t(dx,da)dt,
\end{align*}
which completes the proof.


\end{proof}

\begin{proof}[Proof of Theorem \ref{thm:existence}]
    Let $(\tilde{\nu},\tilde{m},\tilde{\lambda})$ be an arbitrary element of $\Dc_{P}(m^{*,\Xb})$. Let $(\tilde{\nu}^n,\tilde{m}^n,\tilde{\lambda}^n)$ be the sequence constructed as in Lemma~\ref{lemma53}. The optimality of $(\nu^n,m^n,\lambda^n)$ for each $n$ implies that 
    \begin{align*}
        \Gamma[\nu^n,m^n](\nu^n,m^n,\lambda^n)\leq \Gamma[\nu^n,m^n](\tilde{\nu}^n,\tilde{m}^n,\tilde{\lambda}^n).
    \end{align*}
    Note that for any $k>0$, we have
    \begin{align*}
        \int_{[0,T]\times \overline{\mathbb{X}}\times\mathbb{A}}& (\tilde{f}\wedge k)(t,x,m^{n,\Xb}_t,a)\,m^n_t(dx,da)\,dt
        + \int_{[0,T]\times \partial\Xb} h(t,x)\,\lambda^n(dt,dx) + \int_{\overline{\mathbb{X}}} g(x,\nu^n)\,\nu^n(dx) \\
        &\leq \Gamma[\nu^n,m^n](\nu^n,m^n,\lambda^n) \leq \Gamma[\nu^n,m^n](\tilde{\nu}^n,\tilde{m}^n,\tilde{\lambda}^n).
    \end{align*}
    The function $\tilde{f}\wedge k$ satisfies the growth condition in Lemma~\ref{lemma:technical} (ii) (see also Remark~\ref{tfremark}). By applying Lemma~\ref{lemma:technical}, Lemma~\ref{lemma53}, and the uniform integrability condition~\eqref{uni1}, we can take the limit as $n\to\infty$ in the above inequality to obtain
    \begin{align*}
         \int_{[0,T]\times \overline{\mathbb{X}}\times\mathbb{A}} (\tilde{f}\wedge k)(t,x,m^{*,\Xb}_t,a)m^*_t(dx,da)dt
        & + \int_{[0,T]\times \partial\Xb} h(t,x)\lambda^*(dt,dx) + \int_{\overline{\mathbb{X}}} g(x,\nu^*)\nu^*(dx) \\&\leq \Gamma[\nu^*,m^*](\tilde{\nu},\tilde{m},\tilde{\lambda}).
    \end{align*}
   In view of the lower bound of $\tilde{f}$ in Remark~\ref{tfremark}  and the estimates in \eqref{limit_bounds}, we may apply the monotone convergence theorem as $k\to\infty$ to obtain
    \begin{equation*}
       \Gamma[\nu^*,m^*](\nu^*,m^*,\lambda^*)\leq \Gamma[\nu^*,m^*](\tilde{\nu},\tilde{m},\tilde{\lambda}).
    \end{equation*}
    Because $(\tilde{\nu},\tilde{m},\tilde{\lambda})$ is arbitrary, this proves the optimality of $(\nu^*,m^*,\lambda^*)$. It follows that $(\nu^*,m^*,\lambda^*)$ is an LPMFE, which, by Theorem~\ref{theorem:equi}, induces an MFE with a Markovian relaxed control.
\end{proof}

\section{A Numerical Example}\label{sect:5}
In this section, we present a simple numerical example to illustrate the application of the linear programming approach in the inventory management in a mean-field competition context. Consider a large population of competing firms, each managing the inventory of a single product. The state process $X_t$ represents the inventory level of a representative firm at time $t$. The inventory is constrained to lie within a warehouse of finite capacity, hence the state space is the bounded domain $\overline{\mathbb{X}} = [0, X_{\text{max}}]$, where $X_{\text{max}}>0$ is the maximum capacity. The firm's control, $\alpha_t \in \mathbb{A} = [0, A_{\max}]$, is its production rate. The dynamics of the inventory level $X_t$ are described by the following reflected SDE with jumps:
\begin{equation*}
	dX_t = (\alpha_t - D(\mu_t))\,dt + \sigma_0\,dW_t - \delta X_{t-}\,d\tilde{N}_t + m(X_t)\,dR_t,
\end{equation*}
where the coefficients are specified as follows:
\begin{itemize}
	\item The drift coefficient is $b(t, x, \mu, \alpha) = \alpha - D(\mu)$. Here, $D(\mu)$ is the market demand rate. We model the mean-field interaction through the demand, positing a market saturation effect:
	\[
	D(\mu_t) = D_0 - c \int_{\overline{\mathbb{X}}} x \,d\mu_t(x),
	\]
	where $D_0 \geq 0$ is the base demand and $c>0$ is a competition parameter. As the average market inventory increases, the demand for each firm's product decreases.
	
	\item A constant diffusion coefficient, $\sigma(t,x,\mu,\alpha) = \sigma_0 > 0$, models random fluctuations in inventory or demand.
	
	\item The jump coefficient is $\beta(x) = -\delta x$ for some fixed $\delta \in (0,1)$. The jumps, driven by a compensated Poisson process $\tilde{N}_t$, represent sudden spoilage events where a fraction $\delta$ of the current inventory is lost.  This choice ensures that if $X_{t-} \in \overline{\mathbb{X}}$, then the post-jump state $X_{t-} + \beta(X_{t-})$ also lies in $\overline{\mathbb{X}}$. Consequently, the reflection process $R_t$ is continuous.
	
	\item The reflection process $R_t$ activates at the boundaries $\partial\mathbb{X} = \{0, X_{\text{max}}\}$. At the lower boundary $x=0$, $R_t$ measures the cumulative unmet demand (lost sales) due to a stockout. At the upper boundary $x=X_{\text{max}}$, $R_t$ measures the cumulative excess inventory that must be discarded due to full capacity.
\end{itemize}
The representative agent's objective is to choose a production strategy $\alpha$ that minimizes the cost functional:
\[
 \mathbb{E}\Biggl[
\int_{0}^{T} \left(\frac{k}{2}\alpha_t^2 + c_h X_t\right)dt
\;+\;
\int_{0}^{T} h(X_t)\,d R_{t}
\;+\;
p X_{T}
\Biggr],
\]
where the cost functional involves the following components:
\begin{itemize}
	\item The running cost is $f(x, \alpha) = \frac{k}{2}\alpha^2 + c_h x$, comprising a quadratic production cost ($k>0$) and a linear inventory holding cost ($c_h > 0$).
	
	\item The reflection cost $h(x) = C_{\text{stockout}}\mathbf{1}_{\{x=0\}} + C_{\text{overflow}}\mathbf{1}_{\{x=X_{\text{max}}\}}$. The constants $C_{\text{stockout}} > 0$ and $C_{\text{overflow}} > 0$ represent the per-unit penalty for lost sales and for discarding excess inventory, respectively. 
	
	 \item The terminal term $g(x) = p x$ represents a cost for inventory disposal or, if $p<0$, a salvage value for the remaining inventory at time $T$.
\end{itemize}

We propose the following algorithm for computing the LPMFE, similar to \cite{dumitrescu2023linear}.
\begin{algorithm}[H] 
    \caption{Linear programming fictitious play (LPFP) algorithm} 
     \textbf{Data:} A number of steps $N$ for the equilibrium approximation; a initial triple $(\bar{\nu}^{(0)}, \bar{m}^{(0)},\bar{\lambda}^{(0)})$;
     
         \textbf{Result:} Approximate LPMFE

         \textbf{For:} $\ell=0,1, \ldots, N-1$ \textbf{do}
    \begin{algorithmic}[1]
            \Statex Compute a best response $(\nu^{(\ell+1)}, m^{(\ell+1)},\lambda^{(\ell+1)})$ to $(\bar{\nu}^{(\ell)}, \bar{m}^{(\ell)},\bar{\lambda}^{(\ell)})$ by solving the linear programming problem
             $$\underset{(\nu,m,\lambda)\in \mathcal{D}_P(\bar{m}^{(\ell,\Xb}))}{\arg\min}\Gamma[\bar{\nu}^{(\ell)}, \bar{m}^{(\ell)}](\nu, m,\lambda).$$
            \Statex Set $(\bar \nu^{(\ell+1)}, \bar m^{(\ell+1)}, \bar\lambda^{(\ell+1)}) := \frac{\ell}{\ell+1}(\bar \nu^{(\ell)}, \bar m^{(\ell)}, \bar \lambda^{(\ell)}) + \frac{1}{\ell+1}(\nu^{(\ell+1)}, m^{(\ell+1)}, \lambda^{(\ell+1)})$
    \end{algorithmic}
    \textbf{end}
\end{algorithm}
To apply the LPFP algorithm, we discretize the underlying model in a way similar to \cite{dumitrescu2023linear,cho2002linear, mendiondo1998approximation}.  More precisely, we consider a time grid $t=i\Delta$ with $\Delta=\frac{T}{n_t}$, for $i\in\{0,1,\cdots, n_{t}\}$, a state grid $ 0= x_0< x_1< \cdots < x_{n_s} = X_{\text{max}}$ with $\delta=\frac{X_{\text{max}}}{n_s}$ and an action grid  $a_0< a_1< \cdots < a_{n_a}$. The initial distribution $m^*_0$ and the initial guess for the triple $(\bar{\nu}, \bar{m}, \bar{\lambda})$ are discretized on this grid structure.

 Without loss of generality, we treat all the coefficients as independent of the mean-field term, because it is fixed during the best-response computation. Then, the generator $\mathcal{L}$  is given by
\begin{align*}
\mathcal{L}u(t, x, a)=\frac{\partial u}{\partial t}(t, x) &+ (b(t, x, a)-\beta(t,x,a)\pi(t,a))\frac{\partial u}{\partial x}(t, x) + \frac{\sigma^2}{2}(t, x, a)\frac{\partial^2 u}{\partial x^2}(t, x), \\&+(u(t,\Pi_{\overline{\mathbb{X}}}\bigl(x + \beta(t,x,a)\bigr)) - u(t,x))\pi(t,a)\quad \forall u\in C^{1, 2}_b([0, T]\times \bar{\mathbb{X}}).
\end{align*}
We set $\mathcal{D}(\hat L)$ as the functions in $C^{1, 2}_b([0, T]\times \bar{\mathbb{X}})$  restricted to the time-state grid.  For $u \in \mathcal{D}(\hat{L})$, we discretized the derivatives as follows:
\begin{align*}
\hat{L}_t u(t_i, x_j, a_k) &= \frac{u(t_{i+1}, x_j) - u(t_i, x_j)}{\Delta}, \\
\hat{L}_x^u u(t_i, x_j, a_k) &= \frac{\max(\tilde{b}(t_i, x_j, a_k), 0)}{\delta} \left(u(t_{i+1}, x_{j+1}) - u(t_{i+1}, x_{j})\right), \\
\hat{L}_x^d u(t_i, x_j, a_k) &= \frac{\min(\tilde{b}(t_i, x_j, a_k), 0)}{\delta} \left(u(t_{i+1}, x_{j}) - u(t_{i+1}, x_{j-1})\right), \\
\hat{L}_{xx} u(t_i, x_j, a_k) &= \frac{\sigma^2(t_i, x_j, a_k)}{2\delta^2} \left(u(t_{i+1}, x_{j+1}) + u(t_{i+1}, x_{j-1}) - 2u(t_{i+1}, x_j)\right),
\end{align*}
where $\tilde{b}(t,x,a) = b(t, x, a) - \beta(t,x,a)\pi(t,a)$. The jump component is discretized using a linear interpolation. Let $x'_j = \Pi_{\bar{\mathbb{X}}}\bigl(x_j + \beta(t_i, x_j, a_k)\bigr)$ be the post-jump position and $l = \lfloor (x'_j - x_0)/\delta \rfloor$ be the index of the grid point immediately to its left. Then,
\begin{align*}
\hat{L}_{J}u(t_i, x_j, a_k) = \pi(t_i,a_k) \left( \frac{x'_j - x_l}{\delta} u(t_{i+1},x_{l+1}) + \frac{x_{l+1} - x'_j}{\delta} u(t_{i+1},x_{l}) - u(t_{i+1},x_{j}) \right).
\end{align*}
The discretized generator $\hat L$ takes the form:
$$\hat L u (t_i, x_j, a_k) = \hat L_t u (t_i, x_j, a_k) + \hat L_x^u u (t_i, x_j, a_k) + \hat L_x^d u (t_i, x_j, a_k) + \hat L_{xx} u (t_i, x_j, a_k)+\hat{L}_{J}(t_i, x_j, a_k).$$
Similarly, the boundary operator $\Ac$ is discretized as:
\[
\hat{A}u(t_i, x_j) = \frac{u(t_{i+1},x_1)-u(t_{i+1},x_0)}{\delta}\indicator{j=0} - \frac{u(t_{i+1},x_{n_s})-u(t_{i+1},x_{n_{s-1}})}{\delta}\indicator{j=n_s}.
\]
The constraint reads as
\begin{align*}
\sum_{j=0}^{n_s} u(t_{n_t}, x_j)\hat\nu(x_j) &- \Delta \sum_{i=0}^{n_t-1}\sum_{j=1}^{n_s-1}\sum_{k=0}^{n_a} \hat{L} u(t_i, x_j, a_k)\hat m(t_i, x_j, a_k) \\
&-\Delta\sum_{i=0}^{n_t-1}\sum_{j \in \{0, n_s\}} \hat{A}u(t_{i},x_j)\hat\lambda(t_i,x_j) = \sum_{j=0}^{n_s}u(t_0, x_j) m^*_0(x_j),
\end{align*}
for $u\in \mathcal{D}(\hat L)$. The set $\mathcal{D}(\hat L)$ is equal to the linear span of the indicators functions
$$\mathbf{1}_{\{(t_i, x_j)\}}, \quad i\in\{0, 1, \cdots, n_t\}, \; j\in\{0, 1, \cdots, n_s\}$$
on the time-state grid. By linearity, it suffices to evaluate the constraint on the set of indicator functions, which yields a total of $(n_t+1) \times (n_s+1)$ linear constraints. Finally, the discretized cost is given by
\[
\sum_{j=0}^{n_s} g(x_j)\hat\nu(x_j) + \Delta \sum_{i=0}^{n_t-1} \sum_{j \in \{0, n_s\}} h(t_i,x_j)\hat{\lambda}(t_i,x_j) + \Delta \sum_{i=0}^{n_t-1}\sum_{j=1}^{n_s-1}\sum_{k=0}^{n_a} \tilde{f}(t_i, x_j, a_k) \hat m(t_i, x_j, a_k).
\]
In order to evaluate the convergence of the algorithm, we compute the distance between the best response $(\nu^{(N)}, m^{(N)}, \lambda^{(N)})$ computed at the current iteration and the measures $(\bar{\nu}^{(N-1)}, \bar{m}^{(N-1)}, \bar{\lambda}^{(N-1)})$ from the previous iteration:
\begin{align*}
\epsilon_N := &\max \bigg\{ \Delta \sum_{i, j, k} |m^{(N)}(t_i, x_j, a_k) - \bar{m}^{(N-1)}(t_i, x_j, a_k)|, \sum_{j} |\nu^{(N)}(x_j) - \bar{\nu}^{(N-1)}(x_j)|, \\
    & \Delta \sum_{i=0}^{n_t-1} \left( |\lambda^{(N)}(t_i,x_0)-\bar{\lambda}^{(N-1)}(t_i, x_0)| + |\lambda^{(N)}(t_i,x_{n_s})-\bar{\lambda}^{(N-1)}(t_i, x_{n_{s}})| \right)
\bigg\}.
\end{align*}
In this example, we take the initial distribution as the law $\mathcal{N}(2, 0.1)$ truncated to $\Xb$, and the parameters are summarized in Table~\ref{tab:params}.
\begin{table}[htbp]
\centering
\caption{Model Parameters for the Numerical Example}
\label{tab:params}
\begin{tabular}{@{}lcccccc@{}} 
\toprule
\textbf{Parameter} & $X_{\text{max}}$ & $A_{\text{max}}$ & $D_0$ & $c$ & $\sigma_0$ & $\delta$ \\
\textbf{Value}     & 4 & 1 & 0.5 & 0.1 & 1 & 0.1 \\
\midrule
\textbf{Parameter} & $\pi$ & $k$ & $c_h$ & $C_{\text{stockout}}$ & $C_{\text{overflow}}$ & $p$ \\
\textbf{Value}     & 0.5 & 2 & 0.1 & 5 & 1 & -0.1 \\
\bottomrule
\end{tabular}
\end{table}
In Figure~\ref{fig:mfg_dist_results} (left graph), we observe the evolution of the distribution of the representative player $m_t$ over time, while Figure~\ref{fig:mfg_dist_results} (right) shows the terminal distribution $\nu$ of the representative player. Figure~\ref{fig:mfg_control_results} plots the average control given by  $(t_i, x_j) \mapsto \sum_{k} a_k v_{t_i, x_j}(a_k)$. As expected, the representative player with a lower inventory level chooses a higher production rate. Finally, Figure~\ref{fig:mfg_Convergence_results} illustrates the convergence of the algorithm.

\begin{figure}[htbp] 
    \centering 
    \includegraphics[width=\textwidth]{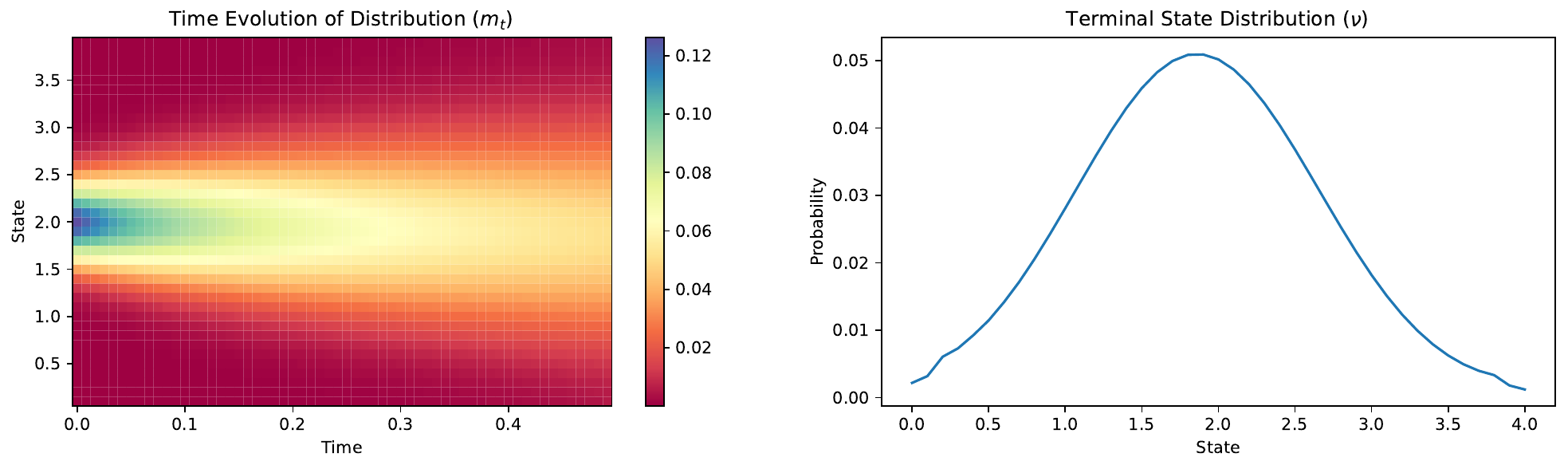}
    \caption{Evolution of the  distribution ($m_t$) and the terminal distribution ($\nu$).}
    \label{fig:mfg_dist_results} 
\end{figure}

\begin{figure}[h] 
    \centering 
    \includegraphics[width=0.45\textwidth]{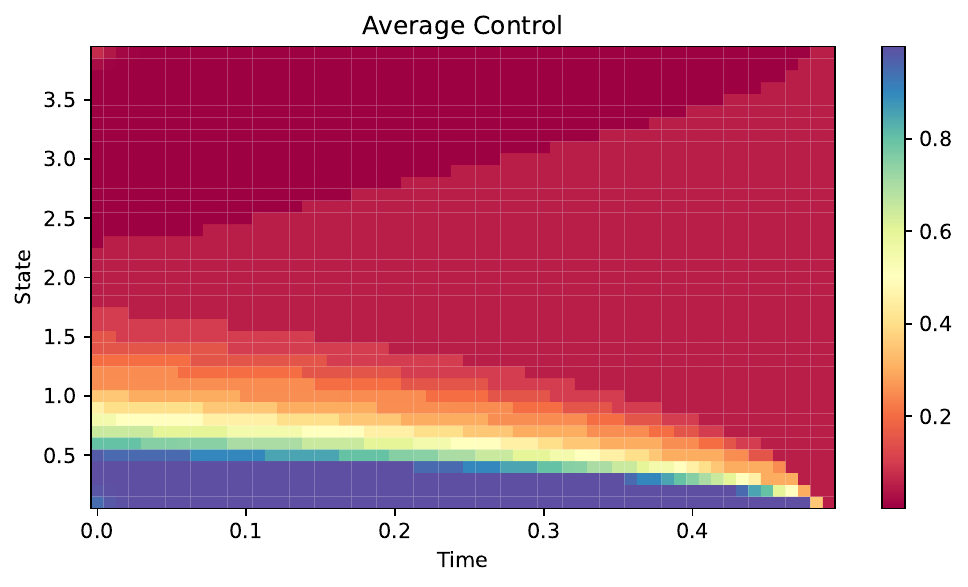}
    \caption{Average control at equilibrium.}
    \label{fig:mfg_control_results} 
\end{figure}

\begin{figure}[h] 
    \centering 
    \includegraphics[width=0.45\textwidth]{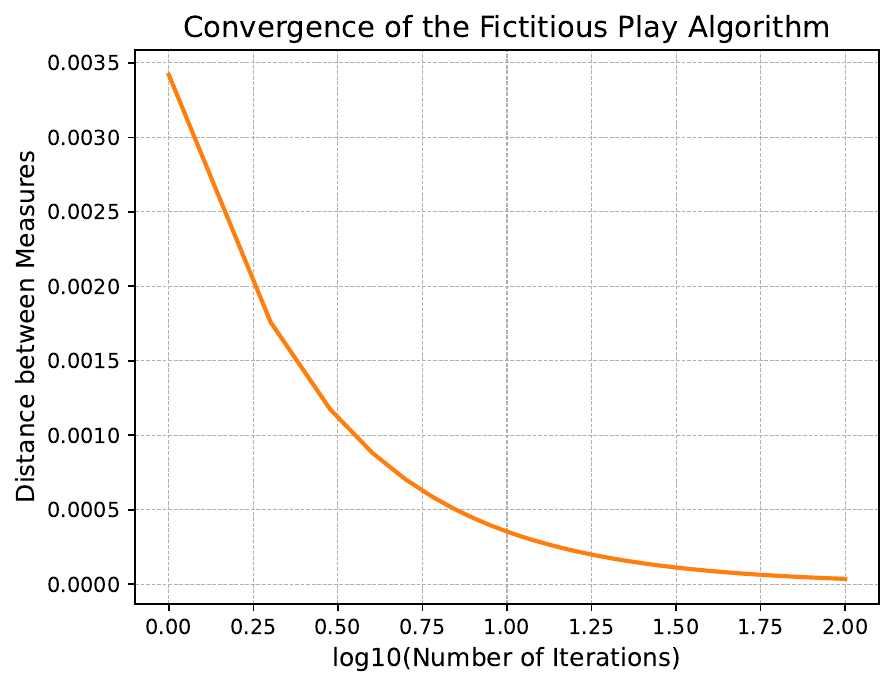}
    \caption{Convergence of the algorithm.}
    \label{fig:mfg_Convergence_results} 
\end{figure}

\appendix		
\section{The Skorokhod Problem for Semimartingales}
This section reviews some results on the Skorokhod problem.  Let $D$ be a domain in $\mathbb{R}^d$ with nonempty interior. Define the set $\mathscr{N}_x$ of inward normal unit vectors at $x \in \partial D$, i.e.,  $\eta \in \mathscr{N}_x$ iff for any $y \in D,\langle x-y, \eta\rangle \leq 0$. 

\begin{definition}
 Let $y \in \mathbb{D}\left([0,T], \mathbb{R}^d\right)$ such that $y_0 \in \bar{D}$. We  say that a pair $(x, k) \in \mathbb{D}\left([0,T], \mathbb{R}^{2 d}\right)$ is a solution of the Skorokhod problem associated with $y$ if
 \begin{itemize}
     \item[(i)] $x_t=y_t+k_t, t \in [0,T]$.
     \item[(ii)] $x_t \in \bar{D}, t \in [0,T]$.
     \item[(iii)] $k$ is a function with bounded variation  such that $k_0=0$ and
$$
k_t=\int_0^t \boldsymbol{n}_s \mathrm{~d}|k|_s, \quad|k|_t=\int_0^t \mathbf{1}_{\left\{x_s \in \partial D\right\}} \mathrm{d}|k|_s,
$$
where $\boldsymbol{n}_s \in \mathscr{N}_{x_s}$ if $x_s \in \partial D$ and $|k|_t$ denotes the total variation of $k$ on $[0, t], t \in [0,T]$.
 \end{itemize}
\end{definition}
\begin{lemma}\label{lemma:newA1}
    Let $y$ and $\tilde{y}$ be elements of $\mathbb{D}\left([0, T], \mathbb{R}^d\right)$ such that $y_0 , \tilde{y}_0\in \bar{D}$ . Let $(x, k)$ and $(\tilde{x}, \tilde{k})$ be the solutions to the Skorokhod problem associated with  $y$ and $\tilde{y}$, respectively. Then, for every $t \in[0,T]$, we have
$$
\left|x_t-\tilde{x}_t\right|^2 \leq\left|y_t-\tilde{y}_t\right|^2+2 \int_0^t\left\langle y_t-y_s+\tilde{y}_s-\tilde{y}_t, \mathrm{d}\left(k_s-\tilde{k}_s\right)\right\rangle.
$$
\end{lemma}
\begin{proof}
    See \cite[Lemma 2.2]{Tanaka1979StochasticDE}.
\end{proof}
Let $(\Omega, \mathcal{F}, \mathbb{F}, \mathbb{P})$ be a filtered probability space satisfying the usual conditions.
\begin{definition}
Let $Y$ be an $\mathbb{F}$-adapted càdlàg process with $Y_0 \in \bar{D}$. A pair of $\mathbb{F}$-adapted processes $(X, K)$ is a solution to the Skorokhod problem associated with $Y$ if, for $\mathbb{P}$-almost every $\omega \in \Omega$, $(X(\omega), K(\omega))$ is a solution to the Skorokhod problem associated with $Y(\omega)$.
\end{definition}
 Let $(X, K)$ be the solution to the Skorokhod problem associated with a semimartingale $Y$ of the form
$$
Y_t = Y_0 + M_t + V_t, \quad t \in [0,T],
$$
where $M$ is an $\mathbb{F}$-adapted local martingale and $V$ is an $\mathbb{F}$-adapted process of bounded variation, satisfying $M_0=V_0=0$. Similarly, let $(X', K')$ be the solution to the Skorokhod problem for another semimartingale $Y'$ with the decomposition
$$
Y'_t = Y'_0 + M'_t + V'_t, \quad t \in [0,T].
$$	
\begin{lemma}\label{lemma:comparison}
    For every $p\geq1$, there exists a constant $C_p>0$ such that for every $\Fb$ stopping time $\tau$,
    \begin{align}\label{inequ:compare}
        \Eb[\sup_{0\leq t\leq\tau}|X_t-X'_t|^{2p}]\leq C_p\Eb\left[|Y_0-Y'_0|^{2p}+[M-M']^p_{\tau}+|V-V'|^{2p}_{\tau}\right],
    \end{align}
    where $[M-M']_t:=\sum_{i=1}^{d} [M^{i}-M^{i'}]_t$ denotes the quadratic variation  and $|V-V'|_{t}:=\sum_{i=1}^{d}|V^{i}-V^{i'}|_{t}$ is the total variation on $[0, t]$.
\end{lemma}
\begin{proof}
    By Lemma \ref{lemma:newA1}, we have 
    $$
\left|X_t-X'_t\right|^2 \leq\left|Y_t-Y'_t\right|^2+2 \int_0^t\left\langle Y_t-Y_s+Y'_s-Y'_t, \mathrm{d}\left(K_s-K'_s\right)\right\rangle.
$$
  Applying the Itô' formula yields
{\small\begin{align*} 
     2 \int_0^t\left\langle Y_t-Y_s+Y'_s-Y'_t, \mathrm{d}\left(K_s-K'_s\right)\right\rangle
   &= 2 \int_0^t \langle K_{s-} - K'_{s-}, d(Y_s - Y'_s) \rangle \\
    &= 2 \int_0^t \langle X_{s-} - X'_{s-}, d(Y_s - Y'_s) \rangle - 2 \int_0^t \langle Y_{s-} - Y'_{s-}, d(Y_s - Y'_s) \rangle\\
    &= 2 \int_0^t \langle X_{s-} - X'_{s-}, d(Y_s - Y'_s) \rangle - |Y_t - Y'_t|^2 + |Y_0 - Y'_0|^2 + [Y - Y']_t.
\end{align*}}
Therefore, it holds that
\begin{align*}
    \left|X_t-X'_t\right|^2 \leq 2 \int_0^t \langle X_{s-} - X'_{s-}, d(Y_s - Y'_s) \rangle  + |Y_0 - Y'_0|^2 + [Y - Y']_t.
\end{align*}
 It then follows that for any $p \ge 1$, there exists a constant $C_p > 0$ such that
\begin{align*}
     \left|X_t-X'_t\right|^{2p} \leq C_p\left( \left|\int_0^t \langle X_{s-} - X'_{s-}, d(Y_s - Y'_s) \rangle\right|^p  + |Y_0 - Y'_0|^{2p} + [Y - Y']^p_t\right).
\end{align*}
The rest of the proof follows the same arguments in \cite[Theorem 1]{Słomiński1994}.
\end{proof}
\begin{corollary}\label{corollary: A5}
\begin{enumerate}
    \item[\textit{(i)}]  For any $p \ge 1$ and any stopping time $\tau$, there exists a constant $C_p>0$ such that
    \begin{equation*}
        \mathbb{E}\left[\sup_{0 \le t \le \tau} |X_t|^{2p}\right] \le C_p \left(1+ \mathbb{E}\left[|Y_0|^{2p} +[M]_{\tau}^p  +  |V|_{\tau}^{2p} \right] \right).
    \end{equation*}
    \item[\textit{(ii)}]  For any $p \ge 1$ and any fixed times $0 \le t \le s \le T$, there exists a constant $C_p > 0$ such that
    \begin{equation*}
        \mathbb{E}\left[ \sup_{t \le u \le s} |X_u - X_t|^{2p} \right] \le C_p \mathbb{E}\left[ ([M]_s - [M]_t)^p + (|V|_s - |V|_t)^{2p} \right].
    \end{equation*}
\end{enumerate}
\end{corollary}
\begin{proof}
(i) Let $x_0 \in \bar{D}$ be an arbitrary fixed point. We apply Lemma~\ref{lemma:comparison} with the constant path $Y'_t \equiv x_0$, for which the unique solution is $(X', K') = (x_0, 0)$, to obtain the bound:
\begin{align*}
      \Eb[\sup_{0\leq t\leq\tau}|X_t-x_0|^{2p}]\leq C_p\Eb\left[|Y_0-x_0|^{2p}+[M]^p_{\tau}+|V|^{2p}_{\tau}\right].
\end{align*}
 The desired estimate then follows from the inequality $|X_t|^{2p} \le C'_p(|X_t - x_0|^{2p} + |x_0|^{2p})$.

(ii) Fix $0 \le t \le s \le T$ and let $Y'_u := Y_{u \wedge t}$. The corresponding reflected process is $X'_u = X_{u \wedge t}$. We apply Lemma~\ref{lemma:comparison} with the stopping time $\tau = s$. The left-hand side of the \eqref{inequ:compare} becomes
\[
    \mathbb{E}\left[ \sup_{0 \le u \le s} |X_u - X_{u \wedge t}|^{2p} \right] = \mathbb{E}\left[ \sup_{t \le u \le s} |X_u - X_t|^{2p} \right].
\]
For the right-hand side, we note that $Y_0 - Y'_0 = 0$ and
\begin{align*}
    [M-M']_s = [M - M_{\cdot \wedge t}]_s = [M]_s - [M]_t, \quad
    |V-V'|_s = |V - V_{\cdot \wedge t}|_s = |V|_s - |V|_t.
\end{align*}
Substituting these into \eqref{inequ:compare} yields the desired result:
\begin{equation*}
    \mathbb{E}\left[ \sup_{t \le u \le s} |X_u - X_t|^{2p} \right] \le C_p \mathbb{E}\left[ ([M]_s - [M]_t)^p + (|V|_s - |V|_t)^{2p} \right]. \qedhere
\end{equation*}
\end{proof}
		\begin{lemma}\label{lemma:A6}
Suppose that Assumption~\ref{c1} holds. For any measurable flow $\mu\in B_q$,  where $q$ is the constant specified in this assumption, let $U\in \mathcal{R}(\mu)$ be an admissible relaxed control in the sense of Definition~\ref{weak_relax}. Then the following statements hold:
\begin{itemize}
    \item[(i)] For any $\bar{q} \in \{2, q\}$, there exists a positive constant $C=C(T,c_3,m_0^*,\bar{q},\|\pi\|_{\infty})$, where $c_3$ is the constant from Assumption~\ref{c1}, such that 
    \begin{equation}\label{app1:estimate}
\mathbb{E}^{\mathbb{P}}\left[\left(\sup_{t\in[0,T]}|X_t|\right)^{\bar{q}}\right] \le C\left(1 + \|\mu\|^{\bar{q}}_{T} + \mathbb{E}^{\mathbb{P}}\left[\int_0^T\int_{\mathbb{A}}|a|^{\bar{q}}\Lambda_t(\mathrm{d}a)\mathrm{d}t\right]\right).
    \end{equation}
   In particular, if $\Pb\circ X^{-1}_t=\mu_t$ for a.e. $t\in[0,T]$, then 
    \begin{align}\label{appA:mu}
        \|\mu\|^{\bar{q}}_T\leq C\left(1 + \mathbb{E}^{\mathbb{P}}\left[\int_0^T\int_{\mathbb{A}}|a|^{\bar{q}}\Lambda_t(\mathrm{d}a)\mathrm{d}t\right]\right).
    \end{align}
     Furthermore, if the coefficients $b$, $\sigma$, and $\beta$ are bounded, then \eqref{app1:estimate} simplifies to $$\mathbb{E}^{\mathbb{P}}\left[\left(\sup_{t\in[0,T]}|X_t|\right)^{\bar{q}}\right] \le C,$$ where $C$ is a constant that depends only on $m_0^*$, $\bar{q}$,  $\|b\|_{\infty}$, $\|\sigma\|_{\infty}$, $\|\beta\|_{\infty}$, and $\|\pi\|_{\infty}$.
    
    \item[(ii)]  If the coefficients $b$, $\sigma$, and $\beta$ are bounded, then for any $h\in[0,T]$ and $t\in[0,T-h]$, there exists a constant $C>0$, depending only on $\|b\|_{\infty}$, $\|\sigma\|_{\infty}$, $\|\beta\|_{\infty}$, and $\|\pi\|_{\infty}$, such that
    \begin{equation}\label{app1:estimate2}
        \mathbb{E}^{\mathbb{P}}\left[|X_{t+h}-X_{t}|^2\right] \le C h.
    \end{equation}
\end{itemize}
\end{lemma}
	\begin{proof}
	    (i) We first focus on the case $\bar{q}=q\geq4$. It follows from Corollary \ref{corollary: A5} (i) that 
{\small\begin{align}\label{appA:1}
\mathbb{E}^{\mathbb{P}}\left[\left(\sup_{s\in[0,t]}|X_s|\right)^{q}\right]\leq &C_{q}\bigg(1+ \int_{\overline{\Xb}}|z|^{q}m_0^{*}(dz)+ \Eb^{\mathbb{P}}\bigg[\left(\int_0^t\int_{\Ab}|b(s,X_s,\mu_s,a)|\Lambda_s(da)ds\right)^q\nonumber\\&+
\left(\int_0^t\int_{\Ab}|\sigma(s,X_s,\mu_s,a)|^2\Lambda_s(da)ds+\int_0^t\int_{\Ab}|\beta(s,X_{s-},\mu_s,a)|^2\mathcal{N}(ds,da)\right)^{\frac{q}{2}} \bigg] \bigg)\nonumber\\
\leq &C_{q}\bigg(1+ \int_{\overline{\Xb}}|z|^{q}m_0^{*}(dz)+ \Eb^{\mathbb{P}}\bigg[\left(\int_0^t\int_{\Ab}|b(s,X_s,\mu_s,a)|\Lambda_s(da)ds\right)^q\\&+
\left(\int_0^t\int_{\Ab}|\sigma(s,X_s,\mu_s,a)|^2\Lambda_s(da)ds\right)^{\frac{q}{2}}+\left(\int_0^t\int_{\Ab}|\beta(s,X_{s-},\mu_s,a)|^2\mathcal{N}(ds,da)\right)^{\frac{q}{2}} \bigg] \bigg).\nonumber
\end{align}}
By Jensen’s inequality and using the growth conditions on $b$, we have for all $t\in[0,T]$:
{\small\begin{align*}
\Eb^{\mathbb{P}}\left[\left(\int_0^t\int_{\Ab}|b(s,X_s,\mu_s,a)|\Lambda_s(da)ds\right)^q\right]&\leq C(T)\Eb^{\mathbb{P}}\left[\int_0^t\int_{\Ab}(\sup_{0\leq u\leq s}|b(u,X_u,\mu_u,a)|)^q\Lambda_s(da)ds\right]\\
&\leq C(T,c_3,q)\Eb^{\Pb}\left[\int_0^t\int_{\Ab}\left(1+(\sup_{0\leq u\leq s}|X_u|)^q+\|\mu\|_s^q+|a|^q\right)\Lambda_s(da)ds\right].
\end{align*}}
Similarly,  for the diffusion term,
{\small\begin{align*}
\Eb^{\mathbb{P}}\left[\left(\int_0^t\int_{\Ab}|\sigma(s,X_s,\mu_s,a)|^2\Lambda_s(da)ds\right)^{\frac{q}{2}}\right]\leq C(T,c_3,q)\Eb^{\Pb}\left[\int_0^t\int_{\Ab}\left(1+(\sup_{0\leq u\leq s}|X_u|)^q+\|\mu\|_s^q+|a|^q\right)\Lambda_s(da)ds\right].
\end{align*}}
Lastly, for the jump term
{\small\begin{align*}
    \Eb^{\mathbb{P}}&
\left[\left(\int_0^t\int_{\Ab}|\beta(s,X_{s-},\mu_s,a)|^2\mathcal{N}(ds,da)\right)^{\frac{q}{2}}\right]\\&=\Eb^{\mathbb{P}}\left[\left(\int_0^t\int_{\Ab}|\beta(s,X_{s-},\mu_s,a)|^2\tilde{\mathcal{N}}(ds,da)+\int_0^t\int_{\Ab}|\beta(s,X_{s-},\mu_s,a)|^2\pi(s,a)\Lambda_s(da)ds\right)^{\frac{q}{2}}\right]\\&\leq C_q\Eb^{\mathbb{P}}\left[\left(\int_0^t\int_{\Ab}|\beta(s,X_{s-},\mu_s,a)|^2\tilde{\mathcal{N}}(ds,da)\right)^{\frac{q}{2}}+\left(\int_0^t\int_{\Ab}|\beta(s,X_{s-},\mu_s,a)|^2\pi(s,a)\Lambda_s(da)ds\right)^{\frac{q}{2}}\right].
\end{align*}}
For the first term on the right-hand side of the above inequality, {we apply a Burkholder-Davis-Gundy type inequality to the stochastic integral involving $\tilde{\mathcal{N}}$ (see, e.g., \cite[Theorem 4.4.23]{Applebaum2009}). This step requires that $q \ge 4$,} which yields
{\small\begin{align*}
\Eb^{\mathbb{P}}\left[\left(\int_0^t\int_{\Ab}|\beta(s,X_{s-},\mu_s,a)|^2\tilde{\mathcal{N}}(ds,da)\right)^{\frac{q}{2}}\right]&\leq C_q\bigg(\Eb^{\mathbb{P}}\left[\left(\int_0^t\int_{\Ab}\sup_{0\leq u\leq  s}|\beta(u,X_{u-},\mu_u,a)|^4\|\pi\|_{\infty}\Lambda_s(da)ds\right)^{\frac{q}{4}}\right]\\&\quad+ \Eb^{\mathbb{P}}\left[\int_0^t\int_{\Ab}\sup_{0\leq u\leq s}|\beta(u,X_{u-},\mu_u,a)|^q\|\pi\|_{\infty}\Lambda_s(da)ds\right] \bigg).
\end{align*}}
For the second term, we have 
{\small\begin{equation*}
\Eb^{\mathbb{P}}\left[\left(\int_0^t\int_{\Ab}|\beta(s,X_{s-},\mu_s,a)|^2\pi(s,a)\Lambda_s(da)ds\right)^{\frac{q}{2}}\right]\leq \Eb^{\mathbb{P}}\left[\left(\int_0^t\int_{\Ab}\sup_{0\leq u\leq  s}|\beta(u,X_{u-},\mu_u,a)|^2\|\pi\|_{\infty}\Lambda_s(da)ds\right)^{\frac{q}{2}}\right].
\end{equation*}}
Therefore, by applying Hölder's inequality, we arrive at
{\small\begin{align*}
    &\Eb^{\mathbb{P}}
\left[\left(\int_0^t\int_{\Ab}|\beta(s,X_{s-},\mu_s,a)|^2\mathcal{N}(ds,da)\right)^{\frac{q}{2}}\right]\\&\leq C(T,q,\|\pi\|_{\infty})\Eb^{\mathbb{P}}\left[\int_0^t\int_{\Ab}\sup_{0\leq u\leq  s}|\beta(u,X_{u-},\mu_u,a)|^q\Lambda_s(da)ds\right]\\
    &\leq C(T,c_3,q,\|\pi\|_{\infty}) \Eb^{\Pb}\left[\int_0^t\int_{\Ab}\left(1+(\sup_{0\leq u\leq s}|X_u|)^q+\|\mu\|_s^q+|a|^q\right)\Lambda_s(da)ds\right].
\end{align*}}
Combining all the previous estimates, we get the existence of a positive constant $C=C(T,c_3,q,\|\pi\|_{\infty})$ such that 
\begin{align*}
    \mathbb{E}^{\mathbb{P}}\left[\left(\sup_{s\in[0,t]}|X_s|\right)^{q}\right]\leq C\left(1+\int_{\overline{\Xb}}|z|^{q}m_0^{*}(dz)+\Eb^{\Pb}\left[\int_0^t\int_{\Ab}\left((\sup_{0\leq u\leq s}|X_u|)^q+\|\mu\|_s^q+|a|^q\right)\Lambda_s(da)ds\right]\right).
\end{align*}
Therefore, \eqref{app1:estimate} follows from the Gronwall’s inequality. Noting that by \eqref{appA:1}, we can also derive 
{\small\begin{align*}
    \sup_{s\in[0,t]} \mathbb{E}^{\mathbb{P}}\left[|X_s|^{q}\right]&\leq \mathbb{E}^{\mathbb{P}}\left[\left(\sup_{s\in[0,t]}|X_s|\right)^{q}\right]\\&
    \leq C\left(1+\int_{\overline{\Xb}}|z|^{q}m_0^{*}(dz)+\int_0^t\left(\sup_{u\in[0,s]}\Eb^{\Pb}[|X_u|^q]+\|\mu\|_s^q\right)ds+\Eb^{\Pb}\left[\int_0^t\int_{\Ab}|a|^q\Lambda_s(da)ds\right]\right).
\end{align*}}
If  $\Pb\circ X^{-1}_t=\mu_t$ for a.e. $t\in[0,T]$, then the above becomes
\begin{align*}
    \|\mu\|^q_t\leq C\left(1+\int_{\overline{\Xb}}|z|^{q}m_0^{*}(dz)+2\int_0^t\|\mu\|_s^qds+\Eb^{\Pb}\left[\int_0^t\int_{\Ab}|a|^q\Lambda_s(da)ds\right]\right).
\end{align*}
The estimate \eqref{appA:mu} now also follows from Gronwall’s inequality.  The final claim for the bounded case follows by a similar argument, replacing the linear growth of the coefficients with the boundedness condition.

For the case of $\bar{q}=2$, we follow the same procedure, noting that the Burkholder-Davis-Gundy type inequality is not required. Instead, the linear growth of $\beta$, together with the moment bound from \eqref{app1:estimate}, ensures that the stochastic integral with respect to the compensated measure $\tilde{\mathcal{N}}$ is a true martingale. This justifies a direct application of the compensation formula, from which we obtain
\begin{align*}
    \mathbb{E}^{\mathbb{P}}\left[ \int_0^t\int_{\mathbb{A}}|\beta(s,X_{s-},\mu_s,a)|^2\,\mathcal{N}(\mathrm{d}s,\mathrm{d}a) \right] 
    = \mathbb{E}^{\mathbb{P}}\left[ \int_0^t\int_{\mathbb{A}}|\beta(s,X_{s-},\mu_s,a)|^2\pi(s,a)\Lambda_s(\mathrm{d}a)\mathrm{d}s \right].
\end{align*}
All the required estimates then follow in a similar fashion.

        (ii) By Corollary \ref{corollary: A5} (ii), we have for any $h\in[0,T]$ and $t\in[0,T-h]$,
      {\small  \begin{align*}
            \mathbb{E}^{\mathbb{P}}\left[|X_{t+h}-X_{t}|^2\right] &\leq C\bigg(\Eb^{\Pb}\bigg[\bigg(\int_t^{t+h}|b(s,X_s,\mu_s,a)|\Lambda_s(da)ds\bigg)^2+\int_t^{t+h}\int_{\Ab}|\sigma(s,X_s,\mu_s,a)|^2\Lambda_s(da)ds\\&\qquad+\int_t^{t+h}\int_{\Ab}|\beta(s,X_{s-},\mu_s,a)|^2\mathcal{N}(ds,da)\bigg]\bigg)\\&=C\bigg(\Eb^{\Pb}\bigg[\bigg(\int_t^{t+h}|b(s,X_s,\mu_s,a)|\Lambda_s(da)ds\bigg)^2+\int_t^{t+h}\int_{\Ab}|\sigma(s,X_s,\mu_s,a)|^2\Lambda_s(da)ds\\&\qquad+\int_t^{t+h}\int_{\Ab}|\beta(s,X_{s-},\mu_s,a)|^2\pi(s,a)\Lambda_s(da)ds\bigg]\bigg).
        \end{align*}}
       Then, if the coefficients $b$, $\sigma$, and $\beta$ are bounded, the inequality \eqref{app1:estimate2} follows.
	\end{proof}

	  \section{Auxiliary Results}
	\label{sec:aux_results}
\begin{lemma}\label{lemma:boundary_construction}
	Let $\mathbb{X} \subset \mathbb{R}^n$ be a bounded domain with $C^2$ boundary $\partial\mathbb{X}$.  
	Then there exists a function $\phi \in C_b^2(\overline{\mathbb{X}})$ such that 
	\[
	\nabla \phi(x) = m(x), \quad x \in \partial\mathbb{X},
	\]
	where $m(x)$ denotes the inward unit normal vector at  $x\in\partial\mathbb{X}$.
\end{lemma}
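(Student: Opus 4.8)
The plan is to build $\phi$ from the signed distance function to $\partial\mathbb{X}$. Define $d\colon\mathbb{R}^n\to\mathbb{R}$ by $d(x):=\operatorname{dist}(x,\partial\mathbb{X})$ for $x\in\overline{\mathbb{X}}$ and $d(x):=-\operatorname{dist}(x,\partial\mathbb{X})$ for $x\notin\overline{\mathbb{X}}$, so that $d>0$ inside $\mathbb{X}$; in particular $d$ is globally defined and Lipschitz on $\mathbb{R}^n$. Since $\partial\mathbb{X}$ is a compact $C^2$ hypersurface, it is a classical fact about tubular neighbourhoods that the signed distance function inherits the boundary regularity near $\partial\mathbb{X}$: there exists $\delta>0$ such that $d\in C^2(N_\delta)$, where $N_\delta:=\{x\in\mathbb{R}^n:|d(x)|<\delta\}$, with $|\nabla d|\equiv1$ on $N_\delta$ and, for every $x\in\partial\mathbb{X}$, $\nabla d(x)$ equal to the inward unit normal $m(x)$ (this is precisely the sign convention under which $d$ is positive on $\mathbb{X}$). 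This regularity statement for $d$ is the only non-elementary ingredient; in the write-up I would invoke it as a cited lemma, and everything that follows is a routine cutoff argument.

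Next I would localise near $\partial\mathbb{X}$. Fix $\theta\in C^\infty(\mathbb{R};[0,1])$ with $\theta\equiv1$ on $[-\delta/3,\delta/3]$ and $\theta\equiv0$ on $\{|t|\ge\delta/2\}$, and set $\chi:=\theta\circ d$, which is well defined on all of $\mathbb{R}^n$ because $d$ is. On the open set $N_\delta$ the function $\chi$ is a composition of a $C^\infty$ map with a $C^2$ map, hence $C^2$, while on the open set $\{|d|>\delta/2\}$ one has $\chi\equiv0$. These two open sets cover $\mathbb{R}^n$ (as $\delta/2<\delta$), and their descriptions of $\chi$ agree on the overlap $\{\delta/2<|d|<\delta\}$, where $\chi=0$; hence $\chi\in C^2(\mathbb{R}^n)$. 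Now define $\phi:=\chi\,d$ on $N_\delta$ and $\phi:=0$ on $\{|d|>\delta/2\}$; the two pieces agree (with value $0$) on the overlap $\{\delta/2<|d|<\delta\}$, so $\phi\in C^2(\mathbb{R}^n)$, and in particular $\phi|_{\overline{\mathbb{X}}}\in C^2(\overline{\mathbb{X}})$. Since $\overline{\mathbb{X}}$ is compact, $\phi$, $\nabla\phi$ and $\nabla^2\phi$ are bounded on $\overline{\mathbb{X}}$, so $\phi\in C_b^2(\overline{\mathbb{X}})$.

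Finally I would verify the boundary identity. For $x\in\partial\mathbb{X}$ we have $x\in N_\delta$, so $\nabla\phi(x)=\chi(x)\,\nabla d(x)+d(x)\,\nabla\chi(x)$; since $d(x)=0$ and $\chi(x)=\theta(0)=1$, this collapses to $\nabla\phi(x)=\nabla d(x)=m(x)$, which is exactly the assertion. The only step that genuinely requires care is the first one — the $C^2$-regularity of the signed distance in a neighbourhood of a $C^2$ hypersurface — which I would state as a cited classical lemma; granted that, the construction of $\phi$ via the cutoff $\chi$ and the checks that $\phi\in C_b^2(\overline{\mathbb{X}})$ and $\nabla\phi|_{\partial\mathbb{X}}=m$ are straightforward bookkeeping.
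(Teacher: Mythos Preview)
Your proof is correct and follows essentially the same approach as the paper: define the signed distance function, invoke its $C^2$-regularity in a tubular neighbourhood, multiply by a radial cutoff composed with $d$, and verify the boundary identity via the product rule using $d(x_0)=0$ and $\chi(x_0)=1$. Your choice of cutoff thresholds (vanishing for $|t|\ge\delta/2$, strictly inside $N_\delta$) is in fact slightly cleaner than the paper's, since it avoids any question about the regularity of $d$ at the edge of the tubular neighbourhood.
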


	\begin{proof}
	Let $d(x)$ be the signed distance function to the boundary $\partial\Xb$, defined by
	\[
	d(x) = \begin{cases} 
		\inf_{y \in \partial\Xb} \|x - y\| & \text{if } x \in \Xb, \\
		-\inf_{y \in \partial\Xb} \|x - y\| & \text{if } x \notin \Xb.
	\end{cases}
	\]
	Because the boundary $\partial\Xb$ is of class $C^2$, it is a standard result that there exists a constant $\delta > 0$ such that $d(x)$ is a $C^2$ function in the tubular neighborhood $N_{\delta} = \{x \in \Rb^d \mid |d(x)| < \delta\}$.  In particular, on the boundary, we have $\nabla d(x)=m(x)$. Let $\eta: \mathbb{R} \to [0, 1]$ be a smooth cutoff function satisfying
		\[
		\eta(t) =
		\begin{cases}
			1 & \text{for } |t| \le \delta/2 ,\\
			0 & \text{for } |t| \ge \delta.
		\end{cases}
		\]
	Let us define the function $\phi: \bar{\mathbb{X}} \to \mathbb{R}$ by $\phi(x) := \eta(d(x)) d(x)$ and verify that $\phi$ has the required properties.
		\begin{enumerate}
			\item \textit{Regularity:} The function $\phi$ is constructed from compositions and products of $C^2$ functions in the region where $\delta/2 < d(x) < \delta$. It smoothly transitions to $\phi(x) = d(x)$ (which is $C^2$) for $0 \leq d(x) \leq \delta/2$, and to $\phi(x) = 0$ (which is $C^\infty$) for $d(x) \geq \delta$. Therefore, $\phi \in C^2(\overline{\mathbb{X}})$. As $\overline{\mathbb{X}}$ is compact, $\phi$ and its first two derivatives are bounded, implying $\phi \in C^2_b(\overline{\mathbb{X}})$.
			
			\item \textit{Boundary Condition:} For any $x_0 \in \partial \mathbb{X}$, we have $d(x_0)=0$, $\eta(0)=1$, and $\eta'(0)=0$. Therefore
			\begin{align*}
				\nabla\phi(x_0) &= \nabla\left[\eta(d(x))d(x)\right]\bigg|_{x=x_0} = \left[ \eta'(d(x_0))d(x_0)\nabla d(x_0) + \eta(d(x_0))\nabla d(x_0) \right] \\
				&= \left[ \eta'(0) \cdot 0 \cdot m(x_0) + \eta(0) \cdot m(x_0) \right] = [0 + 1 \cdot m(x_0)] = m(x_0).
			\end{align*}
		\end{enumerate}
	\end{proof}
\begin{lemma}\label{lemma:boundary_equal}
Let $\mathbb{X}\subset\mathbb{R}^n$ be a bounded domain with $C^3$ boundary $\partial\mathbb{X}$, and let $\lambda_1$ and $\lambda_2$ be finite signed  measures on $[0,T]\times\partial\mathbb{X}$.
	If, for every $u \in C_b^{1,2}([0,T] \times \overline{\mathbb{X}})$, it holds that
	\[
	\int_{[0,T] \times \partial\mathbb{X}} \partial_x u(t,x) \cdot m(x) \, \lambda_1(dt,dx)
	=
	\int_{[0,T] \times \partial\mathbb{X}} \partial_x u(t,x) \cdot m(x) \, \lambda_2(dt,dx),
	\]
	then $\lambda_1 = \lambda_2$.
\end{lemma}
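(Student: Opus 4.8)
The plan is to show that the hypothesis forces the two measures to integrate identically against a rich enough family of continuous functions on $[0,T]\times\partial\mathbb{X}$, and then invoke the Riesz representation / uniqueness of Radon measures. The key point is that the \emph{tangential} data of $u$ on the boundary is irrelevant: only the normal derivative $\partial_x u(t,x)\cdot m(x)$ enters, so one must be able to prescribe an arbitrary continuous function as the normal derivative along $\partial\mathbb{X}$ while keeping $u$ globally $C^{1,2}_b$ on $\overline{\mathbb{X}}$. This is exactly the kind of construction already carried out in Lemma~\ref{lemma:boundary_construction}: near the boundary one uses the signed distance function $d(\cdot)$, which is $C^3$ in a tubular neighborhood $N_{\epsilon_0}$ because $\partial\mathbb{X}$ is $C^3$, and $\nabla d(x)=m(x)$ on $\partial\mathbb{X}$.

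Concretely, first I would fix an arbitrary $g\in C([0,T]\times\partial\mathbb{X})$ and extend it to a function $G\in C_b^{1,2}([0,T]\times\overline{\mathbb{X}})$: using the nearest-point projection $\pi:N_{\epsilon_0}\to\partial\mathbb{X}$ (which is $C^2$ since $\partial\mathbb{X}$ is $C^3$), one can take a smooth spatial extension of $g$ off the boundary — for instance first smoothing $g(t,\cdot)$ in the spatial variable by mollification on $\partial\mathbb{X}$ so that $x\mapsto g(t,\pi(x))$ becomes $C^2$ while the sup-norm error is $\le\varepsilon$ — and then set $u_\varepsilon(t,x):=\eta(d(x))\,d(x)\,\tilde g_\varepsilon(t,\pi(x))$, where $\eta$ is the cutoff from Lemma~\ref{lemma:boundary_construction} and $\tilde g_\varepsilon$ is the mollified version of $g$. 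By the product rule, for $x_0\in\partial\mathbb{X}$ one has $d(x_0)=0$, $\eta(0)=1$, $\nabla d(x_0)=m(x_0)$, so $\partial_x u_\varepsilon(t,x_0)\cdot m(x_0)=\tilde g_\varepsilon(t,\pi(x_0))=\tilde g_\varepsilon(t,x_0)$, which is within $\varepsilon$ of $g(t,x_0)$ in sup-norm; moreover $u_\varepsilon\in C_b^{1,2}([0,T]\times\overline{\mathbb{X}})$ because it is built from products and compositions of $C^2$ functions and is supported away from $\{d\ge\epsilon_0\}$, hence extends by $0$. Applying the hypothesis to $u_\varepsilon$ gives
\[
\int_{[0,T]\times\partial\mathbb{X}}\tilde g_\varepsilon\,\lambda_1(dt,dx)=\int_{[0,T]\times\partial\mathbb{X}}\tilde g_\varepsilon\,\lambda_2(dt,dx),
\]
and since $\tilde g_\varepsilon\to g$ uniformly as $\varepsilon\to0$ while $|\lambda_1|,|\lambda_2|$ are finite, dominated convergence yields $\int g\,\lambda_1=\int g\,\lambda_2$ for every $g\in C([0,T]\times\partial\mathbb{X})$. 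Finally, by the Riesz representation theorem, a finite signed Radon measure on the compact metric space $[0,T]\times\partial\mathbb{X}$ is determined by its action on $C([0,T]\times\partial\mathbb{X})$, so $\lambda_1=\lambda_2$.

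I expect the main obstacle to be the $C^{1,2}$-regularity of the extension $G$ in the spatial variable while only controlling $g$'s values (not derivatives) on $\partial\mathbb{X}$: a generic continuous $g$ has no spatial smoothness, so a naive extension $g(t,\pi(x))$ is only continuous in $x$, not $C^2$, and thus not an admissible test function. The mollification step above is what fixes this — it is harmless because we only need \emph{approximate} prescription of the normal derivative and the measures are finite, so a density/limiting argument closes the gap. (Regularity in $t$ is a non-issue: $C^{1,2}$ here requires one $t$-derivative, and mollifying in $t$ as well, or noting that $g$ need only be approximated, handles it; alternatively one may observe that the hypothesis is already assumed for all $u\in C_b^{1,2}$, and a standard approximation of continuous $g$ by functions smooth in $(t,x)$ near $\partial\mathbb{X}$ suffices.) One should also briefly note why finiteness of the measures is used: it guarantees the dominated/bounded convergence passage to the limit and makes the Riesz uniqueness applicable.
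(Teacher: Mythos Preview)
Your proposal is correct and follows essentially the same approach as the paper: both construct test functions of the form $u(t,x)=\eta(d(x))\,d(x)\,\varphi(t,\pi(x))$ using the signed distance $d$, the $C^2$ nearest-point projection $\pi$ (available because $\partial\mathbb{X}$ is $C^3$), and a cutoff $\eta$, so that the normal derivative on $\partial\mathbb{X}$ recovers $\varphi$; then both conclude by density of smooth boundary data in $C([0,T]\times\partial\mathbb{X})$ and uniqueness of finite Radon measures. The only cosmetic difference is that the paper phrases the density step as $C^{1,2}([0,T]\times\partial\mathbb{X})\subset V$ plus Stone--Weierstrass, whereas you mollify $g$ directly and pass to the limit using finiteness of the measures.
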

\begin{proof}
It suffices to show that 
\[
V = \left\{ \frac{\partial u}{\partial m}\bigg|_{[0,T]\times\partial \Xb} : u\in C_b^{1,2}([0,T]\times\overline{\Xb}) \right\}
\]
is dense in \(C([0,T]\times \partial\Xb)\).  
We will prove this by showing that \(V\) contains the set \(C^{1,2}([0,T]\times \partial\mathbb{X})\),  
which is known to be dense in \(C([0,T]\times \partial\mathbb{X})\) by the Stone--Weierstrass theorem.

To start, fix \(\varphi\in C^{1,2}([0,T]\times \partial\mathbb{X})\).  
In view that \(\partial\mathbb{X}\) is \(C^3\), there exists a \(\delta>0\) such that the signed distance function
\[
d(x) = 
\begin{cases} 
	\inf_{y \in \partial\Xb} \|x - y\|, & \text{if } x \in \Xb, \\[0.3em]
	-\inf_{y \in \partial\Xb} \|x - y\|, & \text{if } x \notin \Xb
\end{cases}
\]
is \(C^3\) in the tubular neighborhood 
\[
N_\delta := \{ x \in \mathbb{R}^n : |d(x)| < \delta \}.
\]
Moreover, there exists a \emph{closest-point projection mapping} \(p: N_\delta \to \partial\mathbb{X}\) that 
\(p(x) := \operatorname*{argmin}_{y \in \partial\mathbb{X}} |x-y|\), which is \(C^2\) for \(C^3\) boundaries. 
Let \(\eta \in C^\infty(\mathbb{R})\) be a smooth cutoff such that \(\eta(s) = 1\) for \(|s| \le \delta/2\) and \(\eta(s) = 0\) for \(|s| \ge \delta\). We now define our test function \(u\) by
\[
u(t,x) :=
\begin{cases}
	\eta(d(x)) \, \varphi(t,p(x))\, d(x), & x \in N_\delta \cap \overline{\mathbb{X}}, \\[0.3em]
	0, & x \in \overline{\mathbb{X}} \setminus N_\delta.
\end{cases}
\]
It is standard to verify that \(u \in C_b^{1,2}([0,T]\times \overline{\mathbb{X}})\). Moreover,
at a point \((t,x_0) \in [0,T]\times \partial \Xb\), we have
\begin{align*}
	\frac{\partial u}{\partial m}(t,x_0) 
	&= \nabla_x u(t,x_0) \cdot m(x_0) \\
	&= \big( \nabla_x \big(\eta(d(x))  \varphi(t,p(x))\big) \, d(x) 
	+ \eta(d(x))  \varphi(t,p(x)) \nabla_x d(x) \big)\big|_{x=x_0} \cdot m(x_0) \\
	&= \varphi(t,x_0)\, |m(x_0)|^2 = \varphi(t,x_0),
\end{align*}
where we have used that \(d(x_0)=0\) and \(\nabla_x d(x_0) = m(x_0)\). Thus, for any \(\varphi \in C^{1,2}([0,T]\times \partial\mathbb{X})\), there exists a function \(u\) in our test class such that \(\frac{\partial u}{\partial m} = \varphi\). Therefore,
\[
C^{1,2}([0,T]\times \partial\mathbb{X}) \subset V,
\]
and \(V\) is dense in \(C([0,T]\times \partial\mathbb{X})\).

\end{proof}

\begin{lemma}\label{lemma:separable}
	The spaces \(C_b^2(\mathbb{R}^d)\) and \(C_b^1(\mathbb{R}_{+})\) are separable under the topology of bounded pointwise convergence.
\end{lemma}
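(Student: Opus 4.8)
The plan is to exhibit an explicit countable family $D_0$ and show that every function is the limit of a sequence from $D_0$ in the sense of bounded pointwise convergence. I will treat $C_b^2(\mathbb{R}^d)$ in detail; the case $C_b^1(\mathbb{R}_+)$ then follows by the identical scheme (one derivative less), after reducing it to $\mathbb{R}$ via the bounded linear extension operator $E\colon C_b^1(\mathbb{R}_+)\to C_b^1(\mathbb{R})$, $(Ef)(x)=f(x)$ for $x\ge0$ and $(Ef)(x)=2f(0)-f(-x)$ for $x<0$, which satisfies $\|Ef\|_{C^1}\le 3\|f\|_{C^1}$, so that a countable family working on $\mathbb{R}$ restricts to one on $\mathbb{R}_+$. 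Fix a cutoff $\chi\in C_c^\infty(\mathbb{R}^d)$ with $0\le\chi\le1$, $\chi\equiv1$ on the unit ball and $\supp\chi\subset B_2$, and a standard mollifier $(\rho_\varepsilon)_{\varepsilon>0}$; writing $\|\varphi\|_{C^2}:=\|\varphi\|_\infty+\|\nabla\varphi\|_\infty+\|\nabla^2\varphi\|_\infty$, I take
$$D_0:=\bigl\{\,P(\cdot)\,\chi(\cdot/R)\ :\ P\in\mathbb{Q}[x_1,\dots,x_d],\ R\in\mathbb{N}\,\bigr\}\subset C_c^\infty(\mathbb{R}^d)\subset C_b^2(\mathbb{R}^d),$$
which is countable.

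Given $\varphi\in C_b^2(\mathbb{R}^d)$, the approximating sequence is built in three stages. First, \emph{truncation}: set $\varphi_R(x):=\varphi(x)\chi(x/R)$; since the derivatives of $x\mapsto\chi(x/R)$ carry factors $R^{-1},R^{-2}$, a direct computation of the partials of order $\le2$ gives $\|\varphi_R\|_{C^2}\le C_\chi\|\varphi\|_{C^2}$ for all $R\ge1$, while $\partial^\alpha\varphi_R(x)=\partial^\alpha\varphi(x)$ for all $|\alpha|\le2$ as soon as $R>|x|$. Second, \emph{mollification}: as $\varphi_R\in C_c^2(\mathbb{R}^d)$ has uniformly continuous derivatives up to order $2$ and convolution commutes with $\partial^\alpha$ for $|\alpha|\le2$, one has $\|\varphi_R*\rho_\varepsilon-\varphi_R\|_{C^2}\to0$ as $\varepsilon\downarrow0$, with $\varphi_R*\rho_\varepsilon\in C_c^\infty$. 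Third, \emph{polynomial approximation on a ball}: by the $C^2$-version of Weierstrass' theorem, rational polynomials are $\|\cdot\|_{C^2}$-dense in $C^2(\overline{B_{R'}})$ for every $R'$; choosing $R'$ large enough that $\chi(\cdot/R')\equiv1$ on $\supp(\varphi_R*\rho_\varepsilon)$ and $P\in\mathbb{Q}[x_1,\dots,x_d]$ close to $\varphi_R*\rho_\varepsilon$ on $\overline{B_{2R'}}$, the element $P\,\chi(\cdot/R')\in D_0$ is $\|\cdot\|_{C^2}$-close to $\varphi_R*\rho_\varepsilon$. Diagonalizing, one picks $R_n\uparrow\infty$, then $\varepsilon_n\downarrow0$ with $\|\varphi_{R_n}*\rho_{\varepsilon_n}-\varphi_{R_n}\|_{C^2}<1/n$, then $d_n\in D_0$ with $\|d_n-\varphi_{R_n}*\rho_{\varepsilon_n}\|_{C^2}<1/n$; then $\|d_n\|_{C^2}\le2+C_\chi\|\varphi\|_{C^2}$, and for each fixed $x$ and $|\alpha|\le2$,
$$\partial^\alpha d_n(x)-\partial^\alpha\varphi(x)=\bigl[\partial^\alpha d_n-\partial^\alpha(\varphi_{R_n}*\rho_{\varepsilon_n})\bigr](x)+\bigl[\partial^\alpha(\varphi_{R_n}*\rho_{\varepsilon_n})-\partial^\alpha\varphi_{R_n}\bigr](x)+\bigl[\partial^\alpha\varphi_{R_n}-\partial^\alpha\varphi\bigr](x),$$
whose first two terms are $\le1/n$ and whose last term vanishes once $R_n>|x|$. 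Hence $d_n\to\varphi$ in the topology of bounded pointwise convergence.

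The calculus estimates on truncation and mollification are routine; the one point that needs care is this diagonalization — arranging a single sequence $(d_n)\subset D_0$ that is simultaneously uniformly $C^2$-bounded and pointwise convergent together with all derivatives of order $\le2$ — and it hinges on the two structural facts above: the dilated cutoff $\chi(\cdot/R)$ has $C^2$-norm bounded uniformly in $R\ge1$ and equals $1$ near any fixed point for $R$ large, and convolution commutes with $\partial^\alpha$, $|\alpha|\le2$, on $C_c^2$. A less explicit alternative avoids the construction: on each $C^2$-ball $B_M:=\{\varphi:\|\varphi\|_{C^2}\le M\}$, an Arzel\`a--Ascoli argument shows that bounded pointwise convergence is equivalent to pointwise convergence of $\varphi$ and its derivatives up to order $2$ on the countable set $\mathbb{Q}^d$, so $B_M$ embeds homeomorphically into a countable product of compact intervals and is thus separable; since $C_b^2(\mathbb{R}^d)=\bigcup_{M\in\mathbb{N}}B_M$, separability follows. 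The same arguments, with $C^2$ replaced by $C^1$ and the extension $E$ handling the endpoint $0$, give the statement for $C_b^1(\mathbb{R}_+)$.
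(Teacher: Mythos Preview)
Your proposal is correct and follows essentially the same route as the paper: the countable dense family is rational polynomials multiplied by scaled smooth cutoffs, and the approximation proceeds by truncation followed by polynomial approximation on compact balls, with the same verification of pointwise convergence of derivatives and uniform $C^2$-bounds. Your intermediate mollification step is harmless but unnecessary (the paper applies Weierstrass directly to $\varphi_R\in C_c^2$), and your reduction of the half-line case via a bounded $C^1$-extension is a small variation on the paper's ``similar argument'' remark; the alternative Arzel\`a--Ascoli route you sketch at the end is a genuinely different, more abstract option the paper does not use.
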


 \begin{proof}
 	We shall only prove the case \(C_b^2(\mathbb{R}^d)\) and the argument for \(C_b^1(\mathbb{R}_{+})\) is similar. 
 	We will construct a countable set $\mathcal{S}$ and show that for any $\phi \in C^2_b(\mathbb{R}^d)$, there exists a sequence $\{s_k\} \subset \mathcal{S}$ converging to $\phi$ in the sense of  bounded pointwise convergence.
 	
 	Let $\psi: \mathbb{R} \to \mathbb{R}$ be a smooth function with $\psi(t) = 1$ for $|t| \le 1$ and $\psi(t) = 0$ for $|t| \ge 2$. Define a radial cutoff function $\Psi: \mathbb{R}^d \to \mathbb{R}$ by $\Psi(x) = \psi(|x|)$ and its scaled versions $\Psi_m(x) = \Psi(x/m)$ for $m \in \mathbb{N}$.
 	Let $\mathcal{P} = \mathbb{Q}[x_1, \cdots, x_d]$ be the countable set of polynomials in $d$ variables with rational coefficients. We define the countable set by
 	$$ \mathcal{S} = \{ p(x) \cdot \Psi_m(x) \mid p \in \mathcal{P}, m \in \mathbb{N} \}. $$
 	Each element of $\mathcal{S}$ has a compact support and is thus in $C^2_b(\mathbb{R}^d)$. Let $\phi \in C^2_b(\mathbb{R}^d)$ be an arbitrary function. For each $k \in \mathbb{N}$, define a compactly supported function $\phi_k(x) = \phi(x)\Psi_k(x)$. The support of \(\phi_k\) is contained in the compact ball \(K_k := \overline{B(0, 2k)}\).
    
 	 We first show that there exists a sequence of functions \(s_k \in \mathcal{S}\) such that $\|s_k - \phi_k\|_{C^2} < 1/k$.
 To prove this claim, fix $k \in \mathbb{N}$. By the Stone-Weierstrass theorem, we can obtain that for any $\epsilon_k > 0$, there exists $p_k \in \mathcal{P}$ such that $\sup_{x \in K_k}|D^\alpha \phi(x) - D^\alpha p_k(x)| < \epsilon_k$ for all multi-indices $|\alpha| \le 2$. Let $s_k(x) := p_k(x)\Psi_k(x)$, which is an element of $\mathcal{S}$. Then there exists a constant $C$ (depending only on the cutoff function $\Psi$) such that:
 $ \|s_k - \phi_k\|_{C^2} < C\epsilon_k. $
 By choosing $\epsilon_k = 1/(kC)$, we achieve the desired bound $\|s_k - \phi_k\|_{C^2} < 1/k$.   
 	We now show that the sequence \(\{s_k\}_{k=1}^\infty\) converges to \(\phi\) in the topology of bounded pointwise convergence.
 	\begin{enumerate}
 		\item \textit{Pointwise Convergence:} For any fixed $x \in \mathbb{R}^d$ and any multi-index $\alpha$ with $|\alpha| \le 2$, we examine the error $|D^\alpha s_k(x) - D^\alpha \phi(x)|$. Using the triangle inequality:
 		$$ |D^\alpha s_k(x) - D^\alpha \phi(x)| \le |D^\alpha s_k(x) - D^\alpha \phi_k(x)| + |D^\alpha \phi_k(x) - D^\alpha \phi(x)|. $$
 		The first term is bounded by $\|s_k - \phi_k\|_{C^2} < 1/k$, which tends to 0.
 		For the second term, choose an integer $M > |x|$. For all $k > M$, we have $\Psi_k(x) = 1$ and all its derivatives are zero at $x$, which implies $D^\alpha\phi_k(x) = D^\alpha\phi(x)$, so this term is zero for all $k > M$.
 		Thus, the total error tends to 0 as $k \to \infty$.
 		
 		\item \textit{Uniform Boundedness:} 
 		$$ \|s_k\|_{C^2} \le \|s_k - \phi_k\|_{C^2} + \|\phi_k\|_{C^2} < \frac{1}{k} + \|\phi_k\|_{C^2}. $$
 		The sequence of norms $\{\|\phi_k\|_{C^2}\}$ is itself bounded. By the Leibniz rule, $D^\alpha \phi_k$ is a sum of terms like $(D^{\alpha-\beta}\phi)(D^\beta \Psi_k)$. The norms $\|D^\beta \Psi_k\|_\infty$ are bounded, independently of $k$ (as $\|D^\beta \Psi_m\|_\infty \sim 1/m^{|\beta|}$). Therefore, there exists a constant $M_\phi$ such that $\|\phi_k\|_{C^2} \le M_\phi$ for all $k$.
 		This implies that $\|s_k\|_{C^2} < 1/k + M_\phi \le 1 + M_\phi$ for all $k \ge 1$. The sequence $\{s_k\}$ is therefore uniformly bounded in the $C^2$ norm.
 	\end{enumerate}
 	Note that we have constructed a sequence from the countable set $\mathcal{S}$ converging to an arbitrary $\phi \in C^2_b(\mathbb{R}^d)$, it follows that $\mathcal{S}$ is dense and the space is separable.
 \end{proof}

\section{Collected Conditions and Results from the Literature}\label{sec:Condition}
For the reader’s convenience, we collect in this section some conditions and results from the literature that have been used in the main body of the paper. Let 
$\mathcal{L}, \mathcal{A}: \mathcal{D} \subset C_b(E) \to C(E \times U)$ 
for some complete, separable metric spaces $E$ and $U$.  We introduce the following condition from \cite{kurtz_stationary_2001}:
\begin{condition}\label{cond:B1}
\begin{itemize}
\item[(i)] $\mathcal{L}, \mathcal{A}: \mathcal{D} \subset C_b(E) \to C(E \times U)$, with $1 \in \mathcal{D}$, and $\mathcal{L} 1 = 0$, $\mathcal{A} 1 = 0$.
\item[(ii)] There exist $\psi_0, \psi_1 \in C(E \times U)$ with $\psi_0, \psi_1 \ge 1$, and constants $a_f, b_f$ (depending on $f \in \mathcal{D}$), such that
\[
|\mathcal{L} f(x,u)| \le a_f \psi_0(x,u), \qquad 
|\mathcal{A} f(x,u)| \le b_f \psi_1(x,u), \quad \forall (x,u) \in E \times U.
\]
\item[(iii)] Defining 
$(\mathcal{L}^0, \mathcal{A}^0) := \{(f, \psi_0^{-1} \mathcal{L} f, \psi_1^{-1} \mathcal{A} f) : f \in \mathcal{D}\}$,
 $(\mathcal{L}^0, \mathcal{A}^0)$ is separable in the sense that there exists a countable collection $\{g_k\} \subset \mathcal{D}$ such that $(\mathcal{L}^0, \mathcal{A}^0)$ is contained in the bounded, pointwise closure of the linear span of 
$\{(g_k, \psi_0^{-1} \mathcal{L} g_k, \psi_1^{-1} \mathcal{A} g_k)\}$.
\item[(iv)] For each $u \in U$, the operators $\mathcal{L}_u$ and $\mathcal{A}_u$, defined by $\mathcal{L}_u f(x) = \mathcal{L} f(x,u)$ and $\mathcal{A}_u f(x) = \mathcal{A} f(x,u)$, are pre-generators.
\item[(v)] $\mathcal{D}$ is closed under multiplication and separates points.
\end{itemize}
\end{condition}
We also provide a sufficient condition for an operator to be a pre-generator; the proof can be found in Remark 1.1 of \cite{kurtz_stationary_2001}.
\begin{prop}\label{prop:B2}
If $\Lc \subset C_b(E) \times C_b(E)$ and for each $x \in E$, there exists a solution $\nu^x$ of the forward equation for $(\Lc, \delta_x)$ that is right-continuous (in the weak topology) at zero, then $\Lc$ is a pre-generator.
\end{prop}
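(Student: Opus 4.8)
The plan is to produce, directly from the hypothesized forward-equation solutions, the approximating jump kernels that witness the pre-generator property. Recall that a solution $\nu^x=(\nu^x_t)_{t\ge 0}$ of the forward equation for $(\Lc,\delta_x)$ is a measurable family in $\Pc(E)$ with $\nu^x_0=\delta_x$ such that, for every $(f,\Lc f)\in\Lc$,
\[
\int_E f\,d\nu^x_t = f(x) + \int_0^t\int_E \Lc f\,d\nu^x_s\,ds,\qquad t\ge 0.
\]
For each $n\in\mathbb{N}$ I would set $\lambda_n(x):=n$ and $\mu_n(x,\cdot):=\nu^x_{1/n}(\cdot)$, and claim that the sequence $(\lambda_n,\mu_n)_n$ exhibits $\Lc$ as a pre-generator, i.e. that for every $(f,\Lc f)\in\Lc$ and every $x\in E$,
\[
\Lc f(x)=\lim_{n\to\infty}\lambda_n(x)\int_E\bigl(f(y)-f(x)\bigr)\,\mu_n(x,dy).
\]

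The core step is the limit computation. Evaluating the forward equation at $t=1/n$ and multiplying by $n$ gives
\[
\lambda_n(x)\int_E\bigl(f(y)-f(x)\bigr)\,\mu_n(x,dy)
= n\Bigl(\int_E f\,d\nu^x_{1/n}-f(x)\Bigr)
= n\int_0^{1/n}\Bigl(\int_E \Lc f\,d\nu^x_s\Bigr)\,ds.
\]
Writing $\phi(s):=\int_E \Lc f\,d\nu^x_s$, the function $\phi$ is bounded by $\|\Lc f\|_\infty$, and since $\Lc f\in C_b(E)$ and $\nu^x_s\to\delta_x$ weakly as $s\downarrow 0$ (right-continuity at zero), one has $\phi(s)\to\Lc f(x)$ as $s\downarrow 0$. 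An elementary Ces\`aro-type estimate then yields the limit: indeed
\[
\Bigl|\,n\int_0^{1/n}\phi(s)\,ds-\Lc f(x)\Bigr|
\le n\int_0^{1/n}\bigl|\phi(s)-\Lc f(x)\bigr|\,ds
\le \sup_{0<s\le 1/n}\bigl|\phi(s)-\Lc f(x)\bigr|\xrightarrow[n\to\infty]{}0.
\]
This establishes the displayed representation of $\Lc f$ through the kernels $(\lambda_n,\mu_n)$.

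It remains to check that this representation matches the definition of a pre-generator, which also requires dissipativity of $\Lc$. This follows for free from the jump-kernel form via the positive maximum principle: if $f$ attains its maximum at $x_0$, then $f(y)-f(x_0)\le 0$ for all $y$, so each approximating expression $\lambda_n(x_0)\int_E(f(y)-f(x_0))\,\mu_n(x_0,dy)$ is nonpositive, and passing to the limit gives $\Lc f(x_0)\le 0$; the positive maximum principle then yields dissipativity in the usual way, completing the verification.

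The hard part will be the measurability built into the definition, namely that $x\mapsto\mu_n(x,\cdot)=\nu^x_{1/n}$ and $x\mapsto\lambda_n(x)$ be measurable kernels rather than merely defined pointwise. The pointwise construction above is immediate for each fixed $x$, but to obtain jointly measurable transition functions one must select the solutions $x\mapsto\nu^x$ in a measurable fashion; I would handle this through a measurable-selection argument for the forward equation, using that its solution set is a measurable correspondence in the relevant Polish path space, which is precisely where the completeness and separability of $E$ enter. Apart from this selection issue the argument is short and self-contained, reducing entirely to the fundamental-theorem-of-calculus limit established above.
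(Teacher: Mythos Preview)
Your argument is exactly the one in Remark~1.1 of Kurtz--Stockbridge~\cite{kurtz_stationary_2001}, to which the paper defers for the proof: take $\lambda_n\equiv n$, $\mu_n(x,\cdot)=\nu^x_{1/n}$, and use right-continuity at zero to pass to the limit in the averaged forward equation. Two small points. First, the definition of pre-generator in that reference is stated pointwise and does \emph{not} require $x\mapsto\mu_n(x,\cdot)$ or $x\mapsto\lambda_n(x)$ to be measurable, so your measurable-selection concern is unnecessary and no selection argument is needed. Second, your dissipativity step via the positive maximum principle implicitly assumes $f$ attains its supremum, which need not hold for $f\in C_b(E)$ on a noncompact $E$; a safer route is to derive $\lambda\|f\|_\infty\le\|\lambda f-\Lc f\|_\infty$ directly from the forward equation, e.g.\ via the identity
\[
f(x)=e^{-\lambda T}\int_E f\,d\nu^x_T+\int_0^T e^{-\lambda t}\int_E(\lambda f-\Lc f)\,d\nu^x_t\,dt,
\]
obtained by multiplying the forward equation by $e^{-\lambda t}$ and integrating in $t$, and then letting $T\to\infty$.
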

Finally, we give a technical result established in \cite{dumitrescu2023linear}, which is frequently used in the proofs of our main results.
\begin{lemma}\label{lemma:technical}
\begin{itemize}
    \item[(i)] Let $\mathcal{X}$ and $\mathcal{Y}$ be complete separable metric spaces and let $\varphi: \mathcal{X} \times \mathcal{Y} \rightarrow \mathbb{R}$ be continuous and satisfying the following growth condition: there exist $c \geq 0$ and $\left(x_0, y_0\right) \in \mathcal{X} \times \mathcal{Y}$ such that for all $(x, y) \in \mathcal{X} \times \mathcal{Y}$
$$
|\varphi(x, y)| \leq c\left(1+d_{\mathcal{X}}\left(x, x_0\right)^p+d_{\mathcal{Y}}\left(y, y_0\right)^p\right).
$$
Consider a sequence $\left(\upsilon^n\right)_{n \geq 1} \in \mathcal{M}^p_{+}(\mathcal{X})$ converging to $\upsilon \in \mathcal{M}^p_+(\mathcal{X})$ in $\tau_p$ such that there exists $C>0$ so that
$$
\sup _{n \geq 1} \int_{\mathcal{X}}\left(1+d_{\mathcal{X}}\left(x, x_0\right)^p\right) \upsilon^n(\mathrm{d} x) \leq C.
$$
Consider also a sequence $\left(y^n\right)_{n \geq 1} \in \mathcal{Y}$ converging to $y \in \mathcal{Y}$ such that there exists a compact set $\mathcal{K} \subset \mathcal{Y}$ so that for all $n \geq 1, y^n \in \mathcal{K}$. Then,
$$
\int_{\mathcal{X}} \varphi\left(x, y^n\right) \upsilon^n(\mathrm{d}x) \underset{n \rightarrow \infty}{\longrightarrow} \int_{\mathcal{X}} \varphi(x, y) \upsilon(\mathrm{d} x).
$$
\item[(ii)]  Let $\Theta, \mathcal{X}, \mathcal{Y}$ be complete, separable metric spaces. Let $\eta \in \mathcal{M}^p_{+}(\Theta)$. Let $\varphi: \Theta \times \mathcal{X} \times \mathcal{Y} \rightarrow \mathbb{R}$, be a measurable map and assume that for every $t \in \Theta, \varphi(t, \cdot)$ is continuous. We assume the following growth condition on $\varphi$ : there exists $c \geq 0$ and $\left(t_0, x_0, y_0\right) \in \Theta \times \mathcal{X} \times \mathcal{Y}$
$$
|\varphi(t, x, y)| \leq c\left(1+d_{\Theta}\left(t, t_0\right)^p+d_{\mathcal{X}}\left(x, x_0\right)^p+d_{\mathcal{Y}}\left(y, y_0\right)^p\right).
$$
Suppose that a sequence of measurable functions $\psi^n: \Theta \rightarrow \mathcal{Y}$ converges $\eta$-a.e. in $\Theta$ to a measurable function $\psi: \Theta \rightarrow \mathcal{Y}$ and that $\left(\upsilon_t^n(\mathrm{d} x) \eta(\mathrm{d} t)\right)_{n \geq 1} \subset \mathcal{M}^p_{+}(\Theta \times \mathcal{X})$ converges to $\upsilon_t(\mathrm{d} x) \eta(\mathrm{d} t) \in \mathcal{M}^p_{+}(\Theta \times \mathcal{X})$ in $\bar{\tau}_p$, where $\left(\upsilon^n\right)_{n \geq 1}$ and $\upsilon$ are transition kernels from $\Theta$ to $\mathcal{X}$. Suppose also that there exists a constant $C>0$ such that, $\eta$-a.e.,
$$
\sup _{n \geq 1} \int_{\mathcal{X}}\left(1+d_{\mathcal{X}}\left(x, x_0\right)^p\right) \upsilon_t^n(\mathrm{d}x) \leq C.
$$
Moreover, suppose that there exists a compact set $\mathcal{K} \subset \mathcal{Y}$ such that for all $n \geq 1, \psi^n(t) \in \mathcal{K}$, $\eta$-a.e.. Then,
$$
\int_{\Theta} \int_{\mathcal{X}} \varphi\left(t, x, \psi^n(t)\right) \upsilon_t^n(\mathrm{d} x) \eta(\mathrm{d} t) \underset{n \rightarrow \infty}{\longrightarrow} \int_{\Theta} \int_{\mathcal{X}} \varphi(t, x, \psi(t)) \upsilon_t(\mathrm{d} x) \eta(\mathrm{d} t).
$$
\end{itemize}
\end{lemma}
\vskip 10pt
\ \\
\noindent
\textbf{Acknowledgements}:  Zongxia  Liang  is supported by the National Natural Science Foundation of China under grant no. 12271290. Xiang Yu and Keyu Zhang are supported by the Hong Kong RGC General Research Fund (GRF) under grant no. 15211524, the Hong Kong Polytechnic University research grant under no. P0045654 and the Research Centre for Quantitative Finance at the Hong Kong Polytechnic University under grant no. P0042708. 

	\bibliographystyle{siam}
        {\small
		\bibliography{ref}}
	\end{document}